\documentclass[10pt]{article}
\usepackage{geometry}
\usepackage[utf8]{inputenc}
\usepackage[english]{babel}

\usepackage{dsfont} 
\usepackage{amsmath,amssymb,amsthm}
\usepackage[justification=centering]{caption}
\usepackage[colorlinks=true, citecolor=red, linkcolor=blue]{hyperref}
\usepackage[nameinlink, capitalize]{cleveref}

\usepackage[colorinlistoftodos]{todonotes}
\usepackage{varwidth}

\usepackage{color}
\usepackage{float}
\usepackage[super]{nth}
\usepackage{booktabs}
\usepackage{rotating}
\usepackage{verbatim}
\usepackage{multirow}
\usepackage{physics}
\usepackage{mathtools}
\usepackage{subcaption}
\usepackage{bm}
\usepackage[toc,page]{appendix}
\usepackage[shortlabels]{enumitem}
\usepackage{nomencl}

\newcounter{mnotecount}[section]

\theoremstyle{plain}
\newtheorem{theorem}{Theorem}
\newtheorem{corollary}{Corollary}
\newtheorem{lemma}{Lemma}
\newtheorem{proposition}{Proposition}

\newtheorem*{claim*}{Claim}

\theoremstyle{definition}

\newtheorem{remark}{Remark}

\numberwithin{equation}{section}

\title{The Dirac Equation in the Classical Limit: A Duality Approach to a Convergence Rate}

\author{François Golse, Jakob M\"oller}

\begin{document}
\maketitle
\begin{abstract}
We study the semiclassical limit of the Dirac equation with external electromagnetic fields using a dual formulation based on a Wasserstein-1-type pseudo-distance. This pseudo-distance is tailored to matrix-valued density operators and matrix-valued Wigner measures and extends earlier metrics comparing density matrices with classical phase-space distributions in the scalar case. We prove a quantitative convergence rate in the semiclassical parameter, improving previous results for the Dirac equation. Our approach relies on a Lagrange-multiplier formulation of the semiclassical limit for the full matrix-valued Wigner equation (without projection onto energy eigenspaces), introduced in~[F.~Golse, N.~Leopold, N.~Mauser, J.~M\"oller, C.~Saffirio, \textit{J.~Math.~Pures Appl.}, (2026)].
\end{abstract}

\setcounter{tocdepth}{1}
\tableofcontents

\section{Introduction}
We study the semiclassical limit of the Dirac equation with external electromagnetic potentials $A\colon \mathbb{R}_t \times \mathbb{R}^3_x \rightarrow \mathbb{R}^3$ (magnetic potential) and $V\colon \mathbb{R}_t \times \mathbb{R}^3_x \rightarrow \mathbb{R}$ (electric potential). The Dirac equation is given by
\begin{equation}
    i\hbar\partial_t \Psi = \left(\alpha\cdot(-i\hbar \nabla -A)+\beta +V\right)\Psi, \quad \Psi|_{t=0} = \Psi^{\mathrm{in}}.
    \label{eq:DiracEquationWaveFunction}
\end{equation}
Here, $\hbar$ is semiclassical parameter, i.e. the scaled reduced Planck constant.
The unknown $$\Psi(t) \in \mathfrak{H} := L^2(\mathbb{R}^3,\mathbb{C}^4)$$ is a 4-spinor with values in $\mathbb{C}^4$. Here, $\alpha_k$ and $\beta$ are the Dirac (or Gamma) matrices
\begin{align*}
        \alpha_k := \begin{pmatrix}
            0 & \sigma_k \\ \sigma_k & 0 
        \end{pmatrix}, \quad k=1,2,3, && \alpha_4 \equiv \beta := \begin{pmatrix}
            I_{2\times 2} & 0 \\ 0 & -I_{2\times 2}
        \end{pmatrix},
    \end{align*}
    where the $2\times 2$ Pauli matrices $\sigma_k$ are given by
    \begin{align*}
\sigma_1 &:= 
\begin{pmatrix}
0 & 1 \\
1 & 0
\end{pmatrix} ,
\quad 
\sigma_2 := 
\begin{pmatrix}
0 & -i  \\
i & 0  
\end{pmatrix} 
\quad \text{and} \quad 
\sigma_3 := 
\begin{pmatrix}
1 & 0 \\
0 & -1 
\end{pmatrix} .
\end{align*}The Dirac equation is equivalent to the von Neumann equation 
\begin{equation}
\label{eq:vonNeumannDirac}
    i\hbar\partial_t R(t) = [\alpha\cdot(-i\hbar \nabla -A)+\beta +V,R(t)], \quad R(0) = R^{\mathrm{in}}
\end{equation}
for the density operator $$ R(t) \in \mathcal{L}(L^2(\mathbb{R}^3,\mathbb{M}_4(\mathbb{C}))),$$
where $\mathcal{L}(E)$ denotes the algebra of bounded operators on a Hilbert space $E$ and $\mathbb{M}_4(\mathbb{C})$ denotes the algebra of $4\times 4$ matrices with entries in $\mathbb{C}$. 

In general, one can define matrix-valued Hamiltonians as follows. Let $P(x,\xi)\in \mathbb{M}_m(\mathbb{C})$ be an $m\times m$ matrix-valued function on the phase space $\mathbb{R}^d_x \times \mathbb{R}^d_{\xi}$ and let $R$ be the kernel of a matrix-valued density operator\footnote{More generally one can consider $P$ to be a smooth section on a vector bundle on the phase space manifold. Then, via local trivializations and local charts, $P$ can be represented as a $\mathbb{C}^{n\times m}$-valued function. However, we restrict ourselves to flat spacetimes. For a recent reference on the definition of Weyl operators and Wigner functions on curved spacetime see \cite{andersson_pseudodifferential_2026}.}. Then the matrix-valued Weyl operator $H$  with matrix-valued symbol $P$ is defined as
\begin{align*}
(H R) (x,y)= \mathrm{Op}^W_{\hbar}[P](R)(x,y) := \frac{1}{(2\pi\hbar)^d} \iint P(\tfrac{x+z}{2},\xi) e^{i\frac{(x-z)}{\hbar}\cdot \xi} R(z,y) d\xi dz.
\end{align*}
Note that since they are matrix-valued, the order of $P$ and $R$ is important. For the Dirac equation the symbol $P$ is given by
\begin{equation}
\label{eq:DiracHamiltonFunction}
    P(t,x,\xi) = \alpha\cdot( \xi -A(t,x))+\beta +V(t,x)I_4.
\end{equation} Suppose that $R(t)$ is a solution of the von Neumann equation
\begin{equation}
\label{eq:vonNeumannIntro}
i\hbar\partial_tR(t)=[H,R(t)]\,,\qquad R(0)=R^{\mathrm{in}}.
\end{equation}
Then the matrix-valued Wigner transform $W_{\hbar}[R]$ (defined in \eqref{eq:WignerTransform}) of $R$ solves the \emph{Wigner equation}
\begin{equation}
\label{eq:WignerIntro}
    \partial_t W_{\hbar} = \tfrac{1}{i\hbar}[P,W_{\hbar}]+ \tfrac{1}{2}(\{P,W_{\hbar}\}-\{W_{\hbar},P\}) + r_{\hbar}\,,\qquad W_{\hbar}(0)=W_{\hbar}[R^{\mathrm{in}}].
\end{equation}
Here, $r_{\hbar}$ is an $O(\hbar)$ remainder term (the remainder term of the Dirac equation is given explicitly in Section \ref{sec:Remainder}).
Note that in the scalar case the oscillating term $\tfrac{1}{i\hbar}[P,W_{\hbar}]$ vanishes. The Wigner transform converges to the matrix-valued  Wigner measure $F$ in the limit $\hbar\rightarrow 0$, cf. \cite{gerard_homogenization_1997, lions_sur_1993}. As shown in \cite{golse_semi-classical_2026}, the limit of the Wigner equation \eqref{eq:WignerIntro} in the sense of distributions is given by
\begin{align}
\begin{cases}
\partial_t F = \tfrac{1}{i}[P,\mathcal{Y}] + \tfrac{1}{2}  \left( \{P, F \} - \{ F, P \} \right), \qquad F|_{t=0}  = F^{\mathrm{in}}, \label{eq:matrix liouville equation introduction}\\
[P,F] = 0.\end{cases}
\end{align}
where $\mathcal{Y}$ is the Lagrange multiplier maintaining the constraint $[P,F]=0$. This equation can be seen as a \emph{matrix Liouville equation}. Indeed, if $f$ and $P$ are scalar functions, the commutators in \eqref{eq:matrix liouville equation introduction} vanish and by the antisymmetry of the Poisson bracket for scalar functions we obtain the well-known scalar Liouville equation
\begin{align}
\partial_t f =  \{P, f \}, \qquad f|_{t=0}  = f^{\mathrm{in}}. 
\end{align}
The matrix Liouville equation \eqref{eq:matrix liouville equation introduction} contains the whole information about the classical system. In particular, it contains the information about the projected Wigner measures $F_j = \Pi_j F \Pi_j$ where $\Pi_j$ is the projection on the $j$th eigenspace of $P$ (this demands some assumptions on the regularity of the eigenvalues, cf. Section \ref{section:liouville and test equation}) and the scalar Wigner measures $f_j = \tr(\Pi_j F \Pi_j)$. The projected Wigner measures $F_j$ obey the matrix Liouville equation including the Berry phase and no-name term/Poissonian curvature \cite{spohn_semiclassical_2000, chai_multiband_2015, emmrich_geometry_1996, littlejohn_geometric_1991}, while the scalar Wigner measure obeys the classical Liouville equation where the Hamilton function is given by the $j$th eigenvalue $\lambda_j$ of the matrix-valued Hamiltonian $P$. For the Dirac equation in particular, the symbol $P$ has two distinct eigenvalues $\lambda_{\pm}$ and its matrix Liouville equations contains the \emph{relativistic transport equations} for the electron and positron phase space distribution functions $f_{\pm}(t,x,v) := \tr((\Pi_{\pm}F)(t,x,\xi-A(t,x)))$, i.e.
\begin{equation}
    \partial_t f_{\pm} \pm \tfrac{v}{\langle v \rangle} \cdot \nabla_x f_{\pm} + (E \pm \tfrac{v}{\langle v \rangle} \times B)\cdot \nabla_v f_{\pm} = 0, \quad \langle v \rangle := \sqrt{1+|v|^2},
\end{equation}
where $E = -\partial_t A -\nabla V$ and $B=\nabla \times A$ is the electric and magnetic field, respectively.

Inspired by the framework introduced in \cite{golse_semi-classical_2026}, we introduce the following pseudo-distance $\mathfrak{D}(t)$ between the matrix-valued density operator $R$ and the matrix-valued Wigner measure $F$ and show that this pseudo-distance converges to zero with a certain rate of convergence in $\hbar$. The pseudo-distance $\mathfrak{D}(t)$ is given by
\begin{align}
\label{eq:MatrixDistanceIntroduction}
\mathfrak{D}((R(t),F(t)) := \sup_{\mathcal{A}} \left|\Tr(R(t) \mathrm{Op}^T_{\hbar}[\Phi(t,\cdot,\cdot)]) - \iint_{\mathbb{R}^d\times \mathbb{R}^d} \tr(F(t,q,p)\Phi(t,q,p)) dq dp\right|,
\end{align}
where $\mathrm{Op}^T[\cdot]$ denotes the Toeplitz quantization, cf. Section \ref{sec:Notation}. Moreover, $\Tr(\cdot)$ denotes the trace in $L^2(\mathbb{R}^d,\mathbb{M}_{m}(\mathbb{C}))$, where $\mathbb{M}_{m}(\mathbb{C})$ denotes the vector space of complex $m\times m$ matrices and $\tr(\cdot)$ denotes the trace in $\mathbb{M}_{m}(\mathbb{C})$. Here, $\mathcal{A}$ (compare definition \eqref{eq:AlgebraA}) denotes the set of final data  $\Phi^{\mathrm{fin}}$ of Sobolev norm $H^k$ less than one for the solutions of the dual matrix Liouville equation
\begin{align}
\begin{cases}
\label{eq:test equation introduction}
\partial_t \Phi = \tfrac{1}{i}[P,\mathcal{Z}]+ \tfrac{1}{2} \left( \{P, \Phi \} - \{ \Phi, P \} \right), \qquad \Phi|_{t=T}  = \Phi^{\mathrm{fin}}, \\
[P,\Phi] = 0,
\end{cases}
\end{align}
where $\mathcal{Z} \in C([0,T],H^{k-1})$ is the Lagrange multiplier maintaining the constraint $[P,\Phi]=0$. By the wellposedness of the matrix Liouville equation \eqref{eq:test equation introduction} (Theorem \ref{thm:LWPLiouville}), proved in Section \ref{sec:LWPMatrixLiouville}, the solution $\Phi$ is uniquely determined by its final datum $\Phi^{\mathrm{fin}}$. For a detailed discussion of the definition of $\mathfrak{D}(t)$ we refer to Section \ref{sec:Distance}.

We have the following main result.
\begin{theorem}
\label{thm:main}
    Let $A,V \in H^s(\mathbb{R}_t\times \mathbb{R}^d_x)$ with $s$ large enough and let $P=\alpha\cdot( \xi -A(t,x))+\beta +V(t,x)I_4$ be the Dirac Hamilton function \eqref{eq:DiracHamiltonFunction}. Let $R(t)$ be a solution of the von Neumann equation \eqref{eq:vonNeumannIntro} with initial data $R^{\mathrm{in}}$ satisfying
    $$\Tr\left( \sqrt{1 - \hbar^2\Delta} R^{\mathrm{in}} \sqrt{1 - \hbar^2\Delta} \right) < + \infty,$$
    and
    \begin{align}
\label{eq:vonNeumannInitialDataMainThm}
\begin{cases}
    \|R^{\mathrm{in}}\|_{L^2} \leq C(2\pi\hbar)^{d/2}, \\
   \|  \tfrac{i}{\hbar} [-x_j, R^{\mathrm{in}} ] \|_{L^2}  \leq C (2\pi\hbar)^{d/2}, \\
     \| \tfrac{i}{\hbar}[-i\hbar \partial_j, R^{\mathrm{in}} ] \|_{L^2}  \leq C (2\pi\hbar)^{d/2}, \\
    \left\| \tfrac{i}{\hbar}\left[\alpha \cdot (-i\hbar\nabla -A(0)) + \beta + V(0),R^{\mathrm{in}}\right]\right\|_{L^2} \leq C(2\pi\hbar)^{d/2}.
    \end{cases}
\end{align}
\begin{enumerate}
\item Let $\mathfrak{D}(t)$ be the pseudo-distance defined in \eqref{eq:MatrixDistanceIntroduction}, where $\Phi\in C([0,T],H^k)\cap C^1([0,T],H^{k-2})$, $k\geq 2$, $k\in \mathbb{N}$ is a strong solution of the matrix Liouville equation \eqref{eq:test equation introduction} with final datum $\Phi^{\mathrm{fin}}\in H^k$, provided by Theorem \ref{thm:LWPLiouville}. Then the following inequality holds for $\mathfrak{D}(t)$,
\begin{align}
\label{eq:MainGronwallEstimate}
    \mathfrak{D}(t) &\leq \mathfrak{D}^{\mathrm{in}} + \sqrt{\hbar}\, \Xi_2 e^{\Xi_2 t} + \hbar\,  \Xi_4 e^{\Xi_4 t}, \quad t\in [0,T], 
\end{align}
    where the constant $\Xi_2$ depends on $T$, $\|A\|_{W^{2,\infty}_{t,x}}$ and $\|V\|_{W^{2,\infty}_{t,x}}$ while $\Xi_4$ depends on $T$, $\|A\|_{W^{4,\infty}_{t,x}}$ and $\|V\|_{W^{3,\infty}_{t,x}}$.
\item Let $W_{\hbar}[R]$ be the Wigner function of $R$. Then the commutator $[P,W_{\hbar}]$ obeys the following bound:
\begin{align}
    \|[P(t),W_{\hbar}(t)]\|_{L^2} &\leq \hbar\left[C(1+F(\|A\|_{W^{2,\infty}_{t,q}},\|V\|_{W^{2,\infty}_{t,q}})) \exp(Ct(1+F(\|A\|_{W^{2,\infty}_{t,x}},\|V\|_{W^{2,\infty}_{t,x}})))\right]. \label{eq:PropagationCommutator}
\end{align}
where $F$ is a continuous function vanishing at the origin.
\end{enumerate}
\end{theorem}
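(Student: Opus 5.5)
We propose to prove part (2) first, because the commutator bound \eqref{eq:PropagationCommutator} is what controls the singular term in part (1). Set $C(t):=\tfrac{i}{\hbar}[H(t),R(t)]$. Differentiating and using the von Neumann equation \eqref{eq:vonNeumannIntro}, we get
\[
i\hbar\partial_t C = [H,C] + \tfrac{i}{\hbar}[i\hbar\partial_t H, R]=[H,C]+\tfrac{i}{\hbar}[-\alpha\cdot\partial_t A+\partial_t V, R].
\]
The first term generates a unitary conjugation, so it preserves the Hilbert--Schmidt norm. The second term is a commutator of $R$ with a multiplication operator $\Theta(t,x)$. We bound it by $\|\nabla_x\Theta\|_{L^\infty}$ times $\|\tfrac{i}{\hbar}[x_j,R]\|_{L^2}$, up to an $O(\hbar)$ correction coming from the matrix structure. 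Expanding $[\Theta,R]$ via $\Theta(x)-\Theta(y)$ with matrix-valued $\Theta$ does not commute with $R$, so we actually estimate $\Theta R - R\Theta$ kernel-wise. This leaves the term $[\Theta(y),R]$ at the diagonal, which is not small. To handle it, we close a Gronwall system for the quantities
\[
\|R\|_{L^2},\quad \|\tfrac{i}{\hbar}[x_j,R]\|_{L^2},\quad \|\tfrac{i}{\hbar}[-i\hbar\partial_j,R]\|_{L^2},\quad \|C\|_{L^2}.
\]
These are the same four quantities assumed finite in \eqref{eq:vonNeumannInitialDataMainThm}. Each one satisfies $i\hbar\partial_t(\cdot)=[H,\cdot]+\text{source}$. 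The sources are commutators of $R$ with derivatives of the symbol $P$: $\alpha_j$, $\nabla A$, $\nabla V$. The matrix-valued pieces such as $\tfrac{i}{\hbar}[\alpha_j,R]$ are controlled by writing $\alpha_j$ through $[H,\cdot]$ relations, for example $\tfrac{i}{\hbar}[x_j,H]=\alpha_j$, which yields $\tfrac{i}{\hbar}[x_j,C]$-type quantities. We expect this closure to be the main obstacle. If it fails, we would instead enlarge the system with $\tfrac{i}{\hbar}[\alpha_j,R]$, whose own evolution again produces only the listed quantities. Gronwall then gives all of them bounded by $C(2\pi\hbar)^{d/2}e^{Ct(1+F)}$. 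Finally, $\|[P,W_\hbar]\|_{L^2}$ equals $(2\pi\hbar)^{-d/2}\|W_\hbar[[H,R]]\|$ up to the Moyal correction $\tfrac{i\hbar}{2}(\{P,W\}-\{W,P\})+O(\hbar^2)$. By Plancherel for the Wigner transform, this is $\hbar$ times the bounded quantities.

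For part (1), we differentiate $t\mapsto \Tr(R\,\mathrm{Op}^T_\hbar[\Phi])-\iint\tr(F\Phi)$. The classical term has zero derivative. Indeed, the duality between \eqref{eq:matrix liouville equation introduction} and \eqref{eq:test equation introduction} gives, after integrating by parts, contributions $\tr([P,\mathcal Y]\Phi)=\tr(\mathcal Y[\Phi,P])=0$ and $\tr(F[P,\mathcal Z])=0$, the latter using the constraints. For the quantum term we use $\Tr(R\,\mathrm{Op}^T[\Phi])=\iint\tr(\widetilde W_\hbar\Phi)$, where $\widetilde W_\hbar$ is the Husimi transform, i.e. $W_\hbar$ convolved with a Gaussian at scale $\sqrt\hbar$. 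This gives
\[
\tfrac{d}{dt}\Tr(R\,\mathrm{Op}^T[\Phi]) = \iint \tr\Big(W_\hbar\big(\partial_t\Phi_\hbar - \tfrac{1}{i\hbar}[P,\Phi_\hbar]-\tfrac12(\{P,\Phi_\hbar\}-\{\Phi_\hbar,P\})\big)\Big)+\iint\tr(r_\hbar\Phi_\hbar),
\]
with $\Phi_\hbar$ denoting the Gaussian smoothing of $\Phi$. We then insert the dual equation \eqref{eq:test equation introduction}.

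Three error sources remain. First, commuting the Gaussian smoothing with multiplication by $P$ and with Poisson brackets costs $O(\sqrt\hbar)\|\Phi\|_{H^k}\|P\|_{W^{2,\infty}}$; this produces the $\sqrt\hbar\,\Xi_2$ term. Second, the remainder $r_\hbar=O(\hbar)$ is bounded using Section \ref{sec:Remainder} and the derivatives of $A$ and $V$, giving the $\hbar\,\Xi_4$ term. Third, the singular term $\tfrac{1}{i\hbar}\iint\tr(W_\hbar[P,\Phi_\hbar])$ arises together with $\iint\tr(W_\hbar\tfrac1i[P,\mathcal Z_\hbar])$. We handle it by writing $[P,\Phi_\hbar]=[P,\Phi]_\hbar+O(\sqrt\hbar)=O(\sqrt\hbar)$, then $\tr(W[P,\cdot])=\tr([W,P]\,\cdot)$, and estimating with part (2): $\hbar^{-1}\cdot\hbar\cdot\sqrt\hbar$. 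The multiplier term is treated the same way, using $\tr(W[P,\mathcal Z])=-\tr([P,W]\mathcal Z)$ and part (2), which gives $O(\hbar)\|\mathcal Z\|_{H^{k-1}}$. The growth of $\|\Phi\|_{H^k}$ and $\|\mathcal Z\|$ is controlled by Theorem \ref{thm:LWPLiouville}, which supplies the exponentials $e^{\Xi t}$. Integrating in time and taking the supremum over $\mathcal A$ then yields \eqref{eq:MainGronwallEstimate}.
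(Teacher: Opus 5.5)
\textbf{The gap is in part (2).} Part (1) inherits it, because part (1) uses $\|[P,W_\hbar]\|_{L^2}=O(\hbar)$.

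When you compute $\partial_t$ of $\tfrac{i}{\hbar}[x_j,R]$, $\tfrac{i}{\hbar}[-i\hbar\partial_j,R]$ and $C$, the sources are $\tfrac{i}{\hbar}[\alpha_j,R]$ and the matrix-commutator parts $\tfrac{i}{\hbar}[\alpha_k\partial_jA_k,R]$ and $\tfrac{i}{\hbar}[\alpha_k\partial_tA_k,R]$. In Wigner variables these are $\tfrac{1}{i\hbar}[dP,W_\hbar]$ for $d=\partial_{p_j},\partial_{q_j},\partial_t$. $W_\hbar$ nearly commutes with $P$, so it is nearly block-diagonal with respect to $\Pi_\pm$. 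But $\Pi_\pm$ do not commute with $\alpha_j$, so these sources are genuinely of size $\hbar^{-1}(2\pi\hbar)^{d/2}$. Neither of your remedies removes them.

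Your first remedy writes $\alpha_j$ via $[x_j,H]=i\hbar\alpha_j$ and uses the Jacobi identity. This only reproduces the evolution equation: with $D_j=\tfrac{i}{\hbar}[x_j,R]$ one gets $\tfrac{i}{\hbar}[\alpha_j,R]=\partial_tD_j-\tfrac{1}{i\hbar}[H,D_j]$. The quantity $\tfrac{i}{\hbar}[x_j,C]=-\partial_tD_j$ you obtain is a mixed second derivative ($\partial_t\partial_{p_j}W_\hbar$) outside your system, so the argument is circular.

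Your second remedy adds $E_j:=\tfrac{i}{\hbar}[\alpha_j,R]$ to the system. This fails for two reasons.
\begin{enumerate}
\item The hypotheses on $R^{\mathrm{in}}$ do not give $\|E_j(0)\|_{L^2}\le C(2\pi\hbar)^{d/2}$. For admissible data with $W_\hbar^{\mathrm{in}}\approx\Pi_+f$ one has $[\alpha_j,W_\hbar^{\mathrm{in}}]\approx[\alpha_j,\Pi_+]f=O(1)$.
\item $\partial_tE_j$ contains $\hbar^{-2}[[\alpha_j,H],R]$, where $[\alpha_j,H]=2\alpha_j\beta+\sum_{k\neq j}2\alpha_j\alpha_k(-i\hbar\partial_k-A_k)$ is of order one. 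So its evolution does not produce only the listed quantities.
\end{enumerate}

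\textbf{The missing idea} is the paper's covariant derivative $\tilde d=d+[\tfrac12Q\,dQ,\cdot]$, with $Q=(\alpha\cdot(p-A)+\beta)/\lambda_0$ and $\lambda_0=\langle p-A\rangle$. It is designed so that $\tilde dP=\tfrac{d\lambda_0}{\lambda_0}(P-V)+dV$, i.e.\ a scalar multiple of $P$ plus a scalar. Then $[\tilde d,\tfrac{1}{i\hbar}[P,\cdot]]=\tfrac{d\lambda_0}{\lambda_0}\tfrac{1}{i\hbar}[P,\cdot]$. The offending term $\tfrac{d\lambda_0}{\lambda_0}\tfrac{1}{i\hbar}[P,W_\hbar]$ is replaced, through the Wigner equation, by $\tfrac{d\lambda_0}{\lambda_0}(\partial_tW_\hbar-BW_\hbar)$. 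This is $O(1)$ and closes once $\tilde\partial_tW_\hbar$ is included in the system. Finally $\|dW_\hbar\|\le\|\tilde dW_\hbar\|+\|Q\,dQ\|_{L^\infty}\|W_\hbar\|$.

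\textbf{On part (1).} Granting part (2), your route is a legitimate and shorter alternative to the paper's explicit Toeplitz computations. In your formulation $\tfrac{1}{i\hbar}[P,\Phi_\hbar]$ is automatically a commutator with $P$, so no Husimi extra term $\mathcal V$ has to be constructed. However, your step ``$\hbar^{-1}\cdot\hbar\cdot\sqrt\hbar$'' needs one more observation: Cauchy--Schwarz on $\hbar^{-1}\iint\tr([W_\hbar,P]\Phi_\hbar)$ alone only gives $O(1)$. The fix is as follows.
\begin{itemize}
\item Since $\tr([W,P]X)=\tr(W[P,X])=0$ whenever $[P,X]=0$, only the off-diagonal part $\Pi_+\Phi_\hbar\Pi_-+\Pi_-\Phi_\hbar\Pi_+$ contributes.
\item Its pointwise norm equals $\sigma^{-1}|[P,\Phi_\hbar]|$ with $\sigma=2\lambda_0\ge2$.
\item Since $[P,\Phi_\hbar]=O(\sqrt\hbar)$, this supplies the missing factor $\sqrt\hbar$.
\end{itemize}
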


The proof of the Gronwall estimate \eqref{eq:MainGronwallEstimate} is contained in Section \ref{sec:GronwallDirac}. The proof of the $L^2$ estimate \eqref{eq:PropagationCommutator} of the commutator $[P,W_{\hbar}]$ is contained in Proposition \ref{lemma:GronwallLagrangeWigner} in Section \ref{sec:Lagrange multiplier}.

\begin{remark}
    The fact that $\mathfrak{D}(t)$ is a pseudo-distance is due to the fact that $\mathfrak{D}(R(t),F(t)) = 0$ does not necessarily imply that $\tilde{W}_{\hbar}[R](t) = F(t)$, where $\tilde{W}_{\hbar}[R]$ is the Husimi function. Naturally, since in the pseudo-distance $\mathfrak{D}(t)$ one tests against block-diagonal test functions $\Phi(t)$ (as $\Phi(t)$ commutes with $P(t)$) the pseudo-distance does not ``see'' the off-diagonal blocks which are themselves captured by $[P,W_{\hbar}]$. Theorem \ref{thm:main} provides an explicit convergence rate for the commutator $[P,W_{\hbar}]$ which completes the picture of the convergence of the quantum dynamics generated by the Dirac Hamiltonian towards the matrix Liouville equation. 
\end{remark}

\begin{remark}
\label{RemarkCommutator}
    Since the commutator $[P,W_{\hbar}]$ measures, in some sense, the ``mixing" of the electron and positron components, its vanishing in the semiclassical limit $\hbar \rightarrow 0$ indicates that in the classical equation the electron and positron modes are decoupled (at least for the linear Dirac equation). Estimate \eqref{eq:PropagationCommutator} then gives a rate for this decoupling of the eigenmodes. In the limit, the equation for the projected matrix-valued Wigner measure $F_j := \Pi_j F \Pi_j$ reads
\begin{equation*}
    \partial_t F_j = \{\lambda_j,F_j\} - \left[F_j,[\Pi_j,\{\lambda_j,\Pi_j\}-\partial_t \Pi_j] + \tfrac{1}{2}\textstyle\sum_{k=1}^m (\lambda_j-\lambda_k)\Pi_j \{\Pi_k,\Pi_k\}\Pi_j\right],
\end{equation*}
where the second and third terms on the right-hand side are the \emph{Berry term} and \emph{Poisson curvature} cf. \cite{gerard_homogenization_1997, spohn_semiclassical_2000, golse_semi-classical_2026, emmrich_geometry_1996, littlejohn_geometric_1991}. For the Dirac equation, where $m=2$ and $\lambda_{\pm}(t,x,\xi) = \pm \sqrt{1+|\xi-A(t,x)|^2}+V(t,x)$, this reduces to
\begin{align*}
\begin{cases}
    \partial_t F_{\pm} = \{\lambda_{\pm},F_{\pm}\} - \left[F_{\pm},[\Pi_{\pm},\{\lambda_{\pm},\Pi_{\pm}\}-\partial_t \Pi_{\pm}] \pm\tfrac{1}{2} {\sigma}\Pi_{\pm} \{\Pi_{\pm},\Pi_{\pm}\}\Pi_{\pm}\right], \\
    \sigma \equiv \sigma(t,x,\xi):= |\lambda_+(t,x,\xi)-\lambda_-(t,x,\xi)| = 2\sqrt{1+|\xi-A(t,x)|^2},
\end{cases}
\end{align*}
since $\{\Pi_{\pm},\Pi_{\pm}\} = \{\Pi_{\mp},\Pi_{\mp}\}$ in view of $\Pi_+ + \Pi_- = I_4$. Therefore, at least for the Dirac equation, the limit of the electron mode $F_+$ ``sees" only the electron mode, similarly for the positron mode $F_-$. Note however that is only true for the linear case, i.e. for external potentials $A,V$. In the self-consistent case where $A,V$ are determined by the total densities 
\begin{align*}
    \rho := \int (\tr(F_+)+\tr(F_-)) d\xi, && J := \int \frac{\xi-A}{\langle \xi-A\rangle}(\tr(F_+)-\tr(F_-)) d\xi
\end{align*}
the eigenmodes are coupled (as is the case for the two-species relativistic Vlasov-Maxwell equation for electrons and positrons).
\end{remark}

    
\begin{remark}
    The conditions \eqref{eq:vonNeumannInitialDataMainThm} on the initial density operator $R^{\mathrm{in}}$ essentially correspond to $\Psi$ being in the domain of the Dirac operator $H$, cf. Proposition \ref{thm:existence dirac}. 

Moreover, the conditions the initial density matrix $R^{\mathrm{in}}$ translate as follows to the Wigner transform. One has
\begin{align*}
\partial_tW_{\hbar}[R] = W_{\hbar}[\partial_t R], &&
\nabla_q W_{\hbar}[R] = W_{\hbar}\left[\tfrac{i}{\hbar}[-i\hbar \nabla,R]\right], && \nabla_p W_{\hbar} = W_{\hbar}\left[\tfrac{i}{\hbar}[-x,R]\right],
\end{align*}
We have
\begin{align}
        \begin{cases}
    \|W^{\mathrm{in}}\|_{L^2} \leq C, \\
   \|\partial_{p_j}W^{\mathrm{in}}\|_{L^2}  \leq C, \\
     \| \partial_{q_j}W^{\mathrm{in}}\|_{L^2}  \leq C.
    \end{cases} \quad \Leftrightarrow \quad \begin{cases}
    \|R^{\mathrm{in}}\|_{\mathrm{HS}} \leq C(2\pi\hbar)^{d/2}, \\
   \|\tfrac{i}{\hbar}[x_j,R^{\mathrm{in}}]\|_{\mathrm{HS}}  \leq C(2\pi\hbar)^{d/2}, \\
     \|\tfrac{i}{\hbar}[-i\hbar \partial_j,R^{\mathrm{in}}]\|_{\mathrm{HS}}  \leq C(2\pi\hbar)^{d/2},
    \end{cases}
    \end{align}
while
\begin{align}
    \|\partial_tW_{\hbar}|_{t=0}\|_{L^2} \leq C \quad &\Leftrightarrow \quad \|\partial_t R|_{t=0}\|_{\mathrm{HS}} \leq C(2\pi\hbar)^{d/2} 
    \nonumber \\ \quad &\Leftrightarrow \quad \|\tfrac{i}{\hbar}[H(0),R^{\mathrm{in}}]\|_{\mathrm{HS}} \leq C(2\pi\hbar)^{d/2} \nonumber \\ \quad &\Leftrightarrow \quad \|\tfrac{i}{\hbar}[\alpha \cdot (-i\hbar \nabla -A(0))+ \beta +V(0),R^{\mathrm{in}}]\|_{\mathrm{HS}} \leq C(2\pi\hbar)^{d/2}.
\end{align}
\end{remark}
\begin{remark}  
\label{remark:DilutedMixedStates}
By a change of variables the condition
    \begin{equation*}
        \|R^{\mathrm{in}}\|_{L^2} \leq C (2\pi\hbar)^{d/2}
    \end{equation*}
    is equivalent to
    \begin{equation}
        \label{eq:MixedStateCondition}\|\tilde{R}^{\mathrm{in}}\|_{L^2} \leq C \quad \Leftrightarrow \quad \|W^{\mathrm{in}}\|_{L^2} \leq C.
    \end{equation}
    By the conservation of the $L^2$-norm (Corollary \ref{thm:existence wigner}), this implies
    \begin{equation*}
        \|W_{\hbar}[R(t)]\|_{L^2} \leq C,
    \end{equation*}
    uniformly in $\hbar > 0$ and $t \in \mathbb{R}$.
    Note that \eqref{eq:MixedStateCondition} excludes pure states since it implies that, for a family $R_{\hbar}$ indexed by $\hbar$,
    \begin{equation*}
        \mathrm{rk}(R_{\hbar}) \geq \frac{1}{C(2\pi \hbar)^d}\rightarrow \infty \quad \text{as } \hbar \rightarrow 0.
    \end{equation*}
    For a detailed discussion, see the discussion after Theorem 3.1 in \cite{golse_velocity_2025}.
    In the context of the mean-field limit of $N$-body fermionic systems, the typical mean field scaling is
    \begin{equation}
    \label{eq:MeanFieldScaling}
        \hbar = N^{-1/d}.
    \end{equation}
    Moreover, the Planck constant is small compared to the typical action of the system in a semiclassical scaling. Then, as a consequence of the Pauli exclusion principle, the typical action of the system grows very large as the number $N$ of fermions tends to infinity.
    
    On the other hand, the one-particle reduced density matrix of a $N$-body Slater determinant is a rank-$N$ mixed state. Combined with the mean-field scaling \eqref{eq:MeanFieldScaling} this means that the rank of $R_{\hbar}$ grows as
    \begin{equation}
        \mathrm{rk}(R_{\hbar}) \geq \frac{N}{C(2\pi)^d} .
    \end{equation}
    Consequently, since the Dirac equation is a fermionic equation, condition \eqref{eq:MixedStateCondition} is completely natural. 
\end{remark}

A survey of matrix-valued Wigner measures that includes the semiclassical limit of the Dirac equation for smooth time-independent potentials can be found in \cite{gerard_homogenization_1997}. In \cite{spohn_semiclassical_2000}, the semiclassical limit of spin precession was treated and the classical BMT equation was derived. In both of these references the semiclassical limit of the Dirac equation was studied by projection on the electron and positron eigenspaces. In \cite{golse_semi-classical_2026} the semiclassical limit of the matrix-valued Wigner transform of $R$ was studied without projection on the eigenspaces by introducing a Lagrange multiplier maintaining the commutatitvity constraint of the matrix-valued Hamiltonian function and the matrix-valued Wigner measure. These results hold in the distributional sense for time-dependent potentials with finite regularity, in particular no convergence rate was obtained. In Theorem \ref{thm:main} we are able to give a  rate of convergence for the weak convergence of the Wigner transform $W_{\hbar}$ to the Wigner measure $F$, thereby extending the results of \cite{golse_semi-classical_2026}.

The problem of the semiclassical limit of the Dirac equation has also been studied in \cite{bolte_semiclassical_2004}.

The pseudo-distance introduced in \eqref{eq:MatrixDistanceIntroduction} generalizes earlier notions \cite{golse_random_2019,golse_convergence_2021} between the scalar density operator and the scalar Wigner measure, for instance for the Schrödinger equation with potential, where the Hamiltonian operator and Hamilton function are given by
\begin{equation}
\label{eq:SchrödingerHamiltonian}
    H = -\tfrac{\hbar^2}{2m}\Delta +V, \qquad P(x,\xi) = \tfrac{1}{2m}|\xi|^2 + V(x).
\end{equation}
In that case, the commutator between $P$ and $W_{\hbar}$ vanishes and one does not have to deal with a Lagrange multiplier since the  commutativity constraint is always satisfied.

\section{Proof strategy}
Our starting point is   the pseudo-distance $\mathfrak{D}(t)$, cf. \eqref{eq:MatrixDistanceIntroduction}, which compares the quantum state $R(t)$ with the matrix-valued phase-space measure $F(t)$ by testing against Toeplitz-quantized matrix-valued symbols $\Phi(t,q,p)$. The test symbols are chosen dynamically: for a fixed terminal datum $\Phi^{\mathrm{fin}}$ we solve a backward-in-time matrix Liouville equation \eqref{eq:test equation introduction}, so that the classical pairing $\iint \tr(F(t)\Phi(t))$ is conserved. In the matrix-valued setting the dual equation includes a Lagrange multiplier $\mathcal{L}=\tfrac{1}{i}[P,\mathcal{Z}]$ that maintains the commutativity constraint $[P,\Phi]=0$, cf. \cite{golse_semi-classical_2026}. This allows us to treat the full Wigner dynamics without projecting onto energy eigenspaces. This is done in Section \ref{sec:preliminaries}. 

With this choice of test functions, differentiating $\Tr(R(t)\mathrm{Op}^T_{\hbar}[\Phi(t)])$ in time and using the von Neumann equation for $R(t)$  as well as the dual matrix Liouville equation for the test function $\Phi$ yields an inequality for $\mathfrak{D}(t)$ in which all contributions are explicit commutator/quantization errors. For the Dirac Hamiltonian we then rewrite these errors by splitting $H$ into the free kinetic, magnetic, and electric parts; this isolates first- and second-order finite-difference terms, as well as a ``Husimi extra term'' that can be recast as a commutator with $P$ and therefore absorbed into the Lagrange-multiplier structure. This is done in Section \ref{sec:DistanceDirac}. 

A key step is to bound the Lagrange-multiplier contribution quantitatively, which requires a convergence rate for the commutator $[P,W_{\hbar}]$ of the symbol of the Hamiltonian and the Wigner transform $W_{\hbar}[R]$ of $R$. This is proved in Proposition \ref{lemma:GronwallLagrangeWigner}. Here, we have to introduce covariant derivatives to close the argument, cf. Remark \ref{Remark:Preconditioning}. Collecting all estimates leads to a Gronwall-type inequality for $\mathfrak{D}(t)$ (Section \ref{sec:GronwallDirac}), and wellposedness of the dual matrix Liouville equation (Section \ref{sec:LWPMatrixLiouville}) provides the needed Sobolev bounds on $\Phi$ to close the argument and obtain the stated semiclassical convergence rate.

\section{Preliminaries}
\label{sec:preliminaries}

\subsection{Matrix Liouville equation and dual equation}
\label{section:liouville and test equation}

In this section we suppose that $P$ is some sufficiently regular matrix-valued symbol. The precise conditions on $P$ vary with the choice of quantization, i.e. whether one defines the Hamilton operator $H$ as the Weyl quantization $\mathrm{Op}_{\hbar}^W[P]$, the Toeplitz quantization $\mathrm{Op}_{\hbar}^T[P]$ or any other quantization rule.

The Wigner transform $W_{\hbar}[R]$, defined in \eqref{eq:WignerTransform}, converges in $\mathcal{S}'(\mathbb{R}_t\times \mathbb{R}^d_x \times \mathbb{R}^d_{\xi})$ to a positive definite matrix-valued Radon measure $F$ on $\mathbb{R}^d_x \times \mathbb{R}^d_{\xi}$, called \emph{Wigner measure}, cf. \cite{gerard_homogenization_1997, lions_sur_1993}. If, in addition, $W_{\hbar}[R]$ is bounded in $t$ with values in $L^2(\mathbb{R}^d_x\times \mathbb{R}^d_{\xi})$ uniformly in $\hbar$, i.e.
\begin{equation}
    \sup_{0<\hbar \leq 1}\sup_{t} \|W_{\hbar}[R(t)]\|_{L^2(\mathbb{R}^d_x\times \mathbb{R}^d_{\xi})} \leq C,
\end{equation}
we obtain that $$F\in L^{\infty}(\mathbb{R}_t,L^2(\mathbb{R}^d_x\times \mathbb{R}^d_{\xi})).$$ 
We recall here the theory of the limit of the Wigner equation \eqref{eq:WignerIntro} towards the matrix Liouville equation \eqref{eq:matrix liouville equation introduction} from \cite{golse_semi-classical_2026}. The Wigner measure $F$ is a solution in $H^{-1}_{\mathrm{loc}}(\mathbb{R}_t\times \mathbb{R}^d_x \times \mathbb{R}^d_{\xi},\mathbb{M}_m(\mathbb{C}))$ of the matrix-valued Liouville equation
\begin{align}
\begin{cases}
\partial_t F = \mathfrak{X}+ \tfrac{1}{2}  \left( \{P, F \} - \{ F, P \} \right), \qquad F|_{t=0}  = F^{\mathrm{in}}, \label{eq:matrix liouville equation}\\
[P,F] = 0,\end{cases}
\end{align}
cf. \cite{golse_semi-classical_2026}. Here, $\mathfrak{X} \in H^{-1}_{\mathrm{loc}}(\mathbb{R}_t\times \mathbb{R}^d_x \times \mathbb{R}^d_{\xi},\mathbb{M}_m(\mathbb{C}))$ is a matrix-valued distribution and the Lagrange multiplier associated to the constraint $[P,F]=0$. It is a priori not obvious that it makes sense to prescribe initial data $F^{\mathrm{in}}$ since up to now, the Wigner measure $F$ is a distribution in $t$. However, recall from \cite{golse_semi-classical_2026}  that $\mathfrak{X}$ has the following property. For all $\Phi \in C^{\infty}_0(\mathbb{R}_t\times \mathbb{R}^d_x \times \mathbb{R}^d_{\xi},\mathbb{M}_m(\mathbb{C}))$ (in the case of the Dirac equation one can also take $H^s(\mathbb{R}_t\times \mathbb{R}^d_x \times \mathbb{R}^d_{\xi},\mathbb{M}_m(\mathbb{C}))$ for $s$ sufficiently large),
\begin{align}
\label{eq:PropertyX}
[P,\Phi] = 0 \qquad  \text{implies that} \qquad  \tr_{\mathbb{C}^d}(\Phi \mathfrak{X})  = 0 \qquad \text{in } H^{-1}_{\mathrm{loc}}(\mathbb{R}_t\times \mathbb{R}^d_x \times \mathbb{R}^d_{\xi},\mathbb{M}_m(\mathbb{C})).
\end{align}
If $P$ is diagonalizable with eigenvalues $\lambda_j \in C^{\infty}(\mathbb{R}_t\times \mathbb{R}^d_x\times \mathbb{R}^d_{\xi})$ such that $$\lambda_1(t,x,\xi)< \dots < \lambda_n(t,x,\xi) \quad {\text{for all }} (t,x,\xi),$$ 
with associated projections $\Pi_j\in C^{\infty}(\mathbb{R}_t\times \mathbb{R}^d_x\times \mathbb{R}^d_{\xi},\mathbb{M}_m(\mathbb{C})))$ on the $j$th eigenspace, then \eqref{eq:PropertyX} implies that 
\begin{align}
\label{eq:conditionXer}
\Pi_j\mathfrak{X}\Pi_j = 0 \qquad \text{in } H^{-1}_{\mathrm{loc}}(\mathbb{R}_t\times \mathbb{R}^d_x \times \mathbb{R}^d_{\xi},\mathbb{M}_m(\mathbb{C})).
\end{align}
As Lemma \ref{lemma:CharacterizationSubspace} shows, this is equivalent to the fact that there exists another matrix-valued distribution $$\mathcal{Y}\in H^{-1}_{\mathrm{loc}}(\mathbb{R}_t\times \mathbb{R}^d_x\times \mathbb{R}^d_{\xi},\mathbb{M}_m(\mathbb{C}))),$$ such that
\begin{equation}
\label{eq:formula Xer Liouville}
    \mathfrak{X} = \tfrac{1}{i}[P,\mathcal{Y}].
\end{equation}
Moreover,
if there are only two distinct eigenvalues $\lambda_+>\lambda_-$ (such as for the Dirac equation), the matrix-valued function $\mathcal{Y}$ satisfies
\begin{equation}
    \mathcal{Y} =\tfrac{1}{\sigma^2}[P,[P,\mathcal{Y}]],
\end{equation}
where $$\sigma :=|\lambda_+-\lambda_-|$$
is the distance of the two eigenvalues.
In that case, it follows from $[P,F]=0$ that
\begin{equation}
\label{eq:FormulaY}
    \mathcal{Y} = \tfrac{i}{\sigma^2}(-[\partial_tP,F]-[P,\tfrac{1}{2}\left( \{P, F \} - \{ F, P \} \right)]).
\end{equation}
Therefore, as long as the derivatives of $P$ are sufficiently regular,
\begin{align*}
F \in L^{\infty}(\mathbb{R}_t,L^2( \mathbb{R}^d_x \times \mathbb{R}^d_{\xi},\mathbb{M}_m(\mathbb{C})) )\quad \text{implies that} \quad \mathcal{Y}  \in L^{\infty}(\mathbb{R}_t,H^{-1}_{\mathrm{loc}}( \mathbb{R}^d_x \times \mathbb{R}^d_{\xi},\mathbb{M}_m(\mathbb{C}))).
\end{align*}
Using the equation, this implies that
\begin{equation*}
    \partial_t F \in L^{\infty}(\mathbb{R}_t,H^{-1}_{\mathrm{loc}}(\mathbb{R}^d_x\times\mathbb{R}^d_{\xi},\mathbb{M}_m(\mathbb{C}))).
\end{equation*}
Therefore
\begin{equation*}
    F \in W^{1,\infty}(\mathbb{R}_t,H^{-1}_{\mathrm{loc}}(\mathbb{R}^d_x\times\mathbb{R}^d_{\xi},\mathbb{M}_m(\mathbb{C}))) \cap L^{\infty}(\mathbb{R}_t,L^2(\mathbb{R}^d_x\times \mathbb{R}^d_{\xi},\mathbb{M}_m(\mathbb{C}))), 
\end{equation*}
and so, since $F$ is continuous in $t$, the prescription of the initial data $F^{\mathrm{in}}$ in \eqref{eq:matrix liouville equation} is well-defined. 

Next we define the dual equation. Let $$\Phi
\in C([0,T],H^k(\mathbb{R}^d_x \times \mathbb{R}^d_{\xi},\mathbb{M}_m(\mathbb{C})))\cap C^1([0,T],H^{k-2}(\mathbb{R}^d_x \times \mathbb{R}^d_{\xi},\mathbb{M}_m(\mathbb{C})))$$ with $k \in \mathbb{N}$ sufficiently large be the unique backwards-in-time solution to
\begin{align}
\begin{cases}
\label{eq:test equation_prelim}
\partial_t \Phi = \mathcal{L} +  \tfrac{1}{2} \left( \{P, \Phi \} - \{ \Phi, P \} \right), \qquad \Phi|_{t=T}  = \Phi^{\mathrm{fin}}, \\
[P,\Phi] = 0,
\end{cases}
\end{align}
provided by Theorem \ref{thm:LWPLiouville},
where 
\begin{equation*}
    \mathcal{L} \in C([0,T],H^{k-1}(\mathbb{R}^d_x\times \mathbb{R}^d_{\xi},\mathbb{M}_m(\mathbb{C}))),
\end{equation*}
is the Lagrange multiplier associated to the constraint $[P,\Phi] = 0$. The reason for the backward evolution in time will become clear below.

We have to impose the analogue of condition \eqref{eq:conditionXer} for the Lagrange multiplier $\mathfrak{X}$ of the Wigner equation, since the dual equation \eqref{eq:test equation_prelim} does not come from a semiclassical limit. Therefore we complement \eqref{eq:test equation_prelim} by the condition
\begin{align}
\Pi_{j}\mathcal{L}\Pi_{j} = 0,
\end{align}
which is equivalent to $\mathcal{L}$ being of the form 
\begin{equation}
\label{eq:formula Xer test}
\mathcal{L} = \tfrac{1}{i}[P,\mathcal{Z}],
\end{equation}
for some $\mathcal{Z} \in C([0,T],H^{k-1}(\mathbb{R}^d_x\times \mathbb{R}^d_{\xi},\mathbb{M}_m(\mathbb{C})))$, cf. Lemma \ref{lemma:CharacterizationSubspace}. Thus, we replace \eqref{eq:test equation_prelim} by
\begin{align}
\begin{cases}
\label{eq:test equation}
\partial_t \Phi =  \tfrac{1}{i}[P,\mathcal{Z}]+ \tfrac{1}{2} \left( \{P, \Phi \} - \{ \Phi, P \} \right), \qquad \Phi|_{t=T}  = \Phi^{\mathrm{fin}}, \\
[P,\Phi] = 0.
\end{cases}
\end{align}
Let now $$P(t,q,p) = \alpha \cdot (p-A(t,q)) + \beta + V(t,q)I_4$$
be the symbol of the Dirac Hamiltonian. We have the following wellposedness theory for the dual Liouville equation:
\begin{theorem}
\label{thm:LWPLiouville}
    Let $P$ be the symbol of the Dirac Hamiltonian and let $A,V \in H^s(\mathbb{R}_t\times \mathbb{R}^d_x)$ for $s$ large enough. Suppose that $\Phi^{\mathrm{in}} \in H^k(\mathbb{R}^d_q\times \mathbb{R}^d_p,\mathbb{M}_m(\mathbb{C}))$ for $k\geq 2$. Then the matrix Liouville equation     \begin{align}
    \begin{cases}
\partial_t \Phi = \tfrac{1}{i}[P,\mathcal{Z}] + \tfrac{1}{2} \left( \{P, \Phi \} - \{ \Phi, P \} \right), \qquad \Phi|_{t=0}  = \Phi_{\mathrm{in}} = \Phi_{\mathrm{in}}^{\ast}, \\
[P,\Phi] = 0,
\end{cases}
\label{eq:MatrixLiouvilleTest}
\end{align}
where
\begin{align}
    P(t,q,p) = \alpha \cdot(p-A(t,q))+ \beta +V(t,q),
\end{align}
has a unique strong solution $\Phi = \Phi^{\ast} \in C([0,T],H^k(\mathbb{R}^d_q\times\mathbb{R}^d_p,\mathbb{M}_4(\mathbb{C})))\cap C^1([0,T],H^{k-2}(\mathbb{R}^d_q\times\mathbb{R}^d_p,\mathbb{M}_4(\mathbb{C})))$ with initial data $\Phi^{\mathrm{in}}$. Moreover, the solution depends continuously on the initial data in the $H^k$-topology.

    In particular we have the following growth bound on the $H^k$-norm of $\Phi$, where $k \in \mathbb{N}$:
    \begin{equation}
        \label{eq:GronwallTestForward}
        \|\Phi(t)\|_{H^k}^2 \leq \|\Phi^{\mathrm{in}}\|_{H^k}^2 e^{C_{k} t}, \quad \text{for } t\in [0,T],
    \end{equation}
    where 
    \begin{equation*}
        C_{k} := (1+F\left(\|\nabla A\|_{{W}^{k,\infty}},\|\partial_t A\|_{{W}^{k-1,\infty}},\|\nabla V\|_{W^{k,\infty}}\right),
    \end{equation*}
    and where $F$ is a continuous function vanishing at the origin.
\end{theorem}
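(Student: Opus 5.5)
The plan is to eliminate the Lagrange multiplier explicitly, reducing \eqref{eq:MatrixLiouvilleTest} to a closed linear first-order system for $\Phi$ alone, and then run $H^k$ energy estimates. For the Dirac symbol the two eigenvalues $\lambda_\pm=\pm\langle p-A\rangle+V$ have gap $\sigma=2\langle p-A\rangle\ge 2$. The projections $\Pi_\pm=\tfrac12\bigl(I\pm(P-V)/\langle p-A\rangle\bigr)$ are therefore smooth with bounded derivatives, uniformly in $p$, and this is why no spectral crossing issue arises.

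\textbf{Step 1: the projected equation.} The constraint $[P,\Phi]=0$ is equivalent to $\Phi=\Pi_+\Phi\Pi_++\Pi_-\Phi\Pi_-$. By Lemma \ref{lemma:CharacterizationSubspace}, $\tfrac1i[P,\mathcal Z]$ ranges exactly over the off-diagonal matrices, those with $\Pi_\pm(\cdot)\Pi_\pm=0$. So the equation is equivalent to
\begin{equation*}
\partial_t\Phi=\mathcal{P}\bigl(\tfrac12(\{P,\Phi\}-\{\Phi,P\})\bigr)+\mathcal{Q}(\partial_t\Phi),
\end{equation*}
where $\mathcal P(M)=\sum_\pm\Pi_\pm M\Pi_\pm$ and $\mathcal Q=I-\mathcal P$. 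The term $\mathcal Q(\partial_t\Phi)$ is determined by the constraint. Differentiating $\mathcal Q\Phi=0$ in time gives $\mathcal Q(\partial_t\Phi)=-(\partial_t\mathcal Q)\Phi$, an expression of order zero in $\Phi$ with coefficients involving $\partial_tA,\partial_tV$.

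The multiplier is then recovered as $\mathcal Z=\tfrac{i}{\sigma^2}[P,\mathcal L]$, in analogy with \eqref{eq:FormulaY}, where $\mathcal L=\mathcal Q(\partial_t\Phi)-\mathcal Q(\tfrac12(\{P,\Phi\}-\{\Phi,P\}))$. This shows $\mathcal Z\in C([0,T],H^{k-1})$.

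Equivalently, and more concretely, one writes the closed system
\begin{equation*}
\partial_t\Phi=\mathcal{P}\bigl(\tfrac12(\{P,\Phi\}-\{\Phi,P\})\bigr)-(\partial_t\mathcal{Q})\Phi=:\mathcal{T}\Phi .
\end{equation*}
One checks that this system propagates the constraint: $\mathcal Q\Phi$ satisfies a linear homogeneous ODE in $t$ and hence stays $0$. It also preserves self-adjointness, since $\{P,\Phi\}^*=\{\Phi,P\}$ for Hermitian $P,\Phi$.

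\textbf{Step 2: structure of $\mathcal T$.} Split $\mathcal T=\mathcal T_1+\mathcal T_0$, with $\mathcal T_1$ first order and $\mathcal T_0$ zeroth order. Since $P$ is affine in $p$, $\tfrac12(\{P,\Phi\}-\{\Phi,P\})$ equals a symmetric matrix transport $\tfrac12\sum_j(\partial_{p_j}P\,\partial_{q_j}\Phi+\partial_{q_j}\Phi\,\partial_{p_j}P-\partial_{q_j}P\,\partial_{p_j}\Phi-\partial_{p_j}\Phi\,\partial_{q_j}P)$, whose coefficients $\partial_qP$ and $\partial_pP=\alpha$ are bounded.

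The key point is that $\mathcal T_1$ is skew-adjoint in $L^2(\mathbb M_4)$ with the Hilbert–Schmidt inner product, up to zeroth-order terms. Integrating by parts in $\langle \Psi,\tfrac12(\{P,\Phi\}-\{\Phi,P\})\rangle$ and using cyclicity of the trace gives $-\langle \tfrac12(\{P,\Psi\}-\{\Psi,P\}),\Phi\rangle$ plus terms with $\partial_q\partial_pP=-\nabla A$. Compressing by the orthogonal projection $\mathcal P$ keeps skew-adjointness on the constrained subspace. Note that $\mathcal P$ is orthogonal because the $\Pi_\pm$ are Hermitian.

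Commutators $[\partial^\gamma,\mathcal T]$ for $|\gamma|\le k$ then involve derivatives of $\Pi_\pm$ and of $P$ up to order $k+1$. Those of $\Pi_\pm$ are bounded by $\nabla A$ and $\partial_tA$ norms, giving $\|\nabla A\|_{W^{k,\infty}}$, $\|\partial_tA\|_{W^{k-1,\infty}}$, and $\|\nabla V\|_{W^{k,\infty}}$. This is the origin of $C_k$, with no $V$ itself appearing since only gradients enter.

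\textbf{Step 3: energy estimate and existence.} Compute $\tfrac{d}{dt}\|\partial^\gamma\Phi\|_{L^2}^2$. The top-order terms cancel by skew-adjointness, and the rest is bounded by $C_k\|\Phi\|_{H^k}^2$. Gronwall then yields \eqref{eq:GronwallTestForward}.

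Existence follows by a standard scheme: mollify or Galerkin-approximate, or solve the symmetric hyperbolic system by the characteristics of the scalar part plus the Friedrichs method, using the uniform $H^k$ bounds and weak compactness. Uniqueness and continuous dependence follow from the $L^2$ estimate applied to differences. The regularity $\partial_t\Phi=\mathcal T\Phi\in H^{k-1}\subset H^{k-2}$ gives the $C^1$ statement, and the backward problem is identical by time reversal.

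\textbf{Main obstacle.} The delicate step is Step 2, verifying that after projection the first-order operator is genuinely skew-adjoint on the constraint space, including for the anticommutator-type matrix terms, and that the $p$-growth of $P$ does not generate unbounded coefficients. Here I would rely on $\partial_pP=\alpha$ being constant, and on the derivatives of $\Pi_\pm$ decaying like $\langle p-A\rangle^{-1}$.
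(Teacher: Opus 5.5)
Your route differs from the paper's. The paper keeps the multiplier explicitly, $\mathcal Z=\tfrac{i}{\sigma^2}\bigl(-[\partial_tP,\Phi]-[P,\tfrac12(\{P,\Phi\}-\{\Phi,P\})]\bigr)$, and adds a viscosity $\delta\Delta\Phi$. It solves the regularized problem by a Duhamel fixed point on a $\delta$-dependent interval, then proves a $\delta$-uniform $H^k$ bound. In that bound the constraint takes one derivative off $\mathcal Z$, via $[P,D^\alpha\Phi]=-\sum_{\beta<\alpha}\binom{\alpha}{\beta}[D^{\alpha-\beta}P,D^\beta\Phi]$. Finally it removes $\delta$ by a Bona--Smith argument, with mollified data to recover continuity in $H^k$. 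You instead eliminate the multiplier by projection and get a closed \emph{linear} first-order system with bounded coefficients. This makes the viscosity, the Kato--Ponce estimates and the Bona--Smith step unnecessary, gives $\partial_t\Phi\in C([0,T],H^{k-1})$, and recovers the same $\mathcal Z=\tfrac{i}{\sigma^2}[P,\mathcal L]$ a posteriori. Your constraint propagation is correct: $\mathcal Q^2=\mathcal Q$ gives $\mathcal P\dot{\mathcal Q}=\dot{\mathcal Q}\mathcal Q$, hence $\partial_t(\mathcal Q\Phi)=\dot{\mathcal Q}(\mathcal Q\Phi)$.

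One step fails as written. Let $\mathcal B\Phi:=\tfrac12(\{P,\Phi\}-\{\Phi,P\})$. Your principal part is $\mathcal P\mathcal B$, and you only establish skew-adjointness on the constrained subspace. In Step 3, however, you apply it to $\partial^\gamma\Phi$, which is not constrained. Since $\mathcal P$ does not commute with the coefficients of $\mathcal B$ (e.g.\ $[\alpha_j,\Pi_\pm]\neq0$), the principal symbol of $\mathcal P\mathcal B$ is not Hermitian and the system is not symmetric hyperbolic on the full space. So $2\operatorname{Re}\langle\partial^\gamma\Phi,\mathcal P\mathcal B\,\partial^\gamma\Phi\rangle$ contains the piece $2\operatorname{Re}\langle\mathcal P\partial^\gamma\Phi,\mathcal B\mathcal Q\,\partial^\gamma\Phi\rangle$, which skew-adjointness does not control. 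For the same reason your existence scheme is unjustified: Galerkin or mollified approximants do not keep $\mathcal Q\Phi=0$, so your uniform bounds do not apply to them.

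The missing observation is that the constraint makes the off-diagonal part of high derivatives lower order: $\mathcal Q\partial^\gamma\Phi=-[\partial^\gamma,\mathcal Q]\Phi$ has order $k-1$, so that piece is $O(\|\Phi\|_{H^k}^2)$. This is exactly what the paper's identity for $[P,D^\alpha\Phi]$ does. A cleaner fix is to replace your system by $\partial_t\Phi=\mathcal P\mathcal B\mathcal P\Phi+\dot{\mathcal P}\Phi$. It agrees with yours on constrained $\Phi$ (since $\dot{\mathcal P}=-\dot{\mathcal Q}$) and still gives $\partial_t(\mathcal Q\Phi)=\dot{\mathcal Q}\mathcal Q\Phi$. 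It is also genuinely symmetric hyperbolic on $L^2(\mathbb R^{2d};\mathbb M_4(\mathbb C))$, because $\mathcal P$ is an orthogonal projection and $\mathcal B^*=-\mathcal B$ exactly. (Note $\partial_q\partial_pP=\partial_q\alpha=0$, not $-\nabla A$, so no zeroth-order terms arise.) Classical Friedrichs--Kato theory then gives existence, uniqueness, continuous dependence and $C([0,T],H^k)$ directly, with no constraint-preserving approximation needed.

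Two smaller corrections. First, for Hermitian $P,\Phi$ one has $\{P,\Phi\}^*=-\{\Phi,P\}$; with your sign, $\{P,\Phi\}-\{\Phi,P\}$ would be anti-Hermitian. Second, expanding $\partial^\gamma(\dot{\mathcal Q}\Phi)$ naively costs $\|\partial_tA\|_{W^{k,\infty}}$, not the $\|\partial_tA\|_{W^{k-1,\infty}}$ in $C_k$. To reach the latter:
\begin{itemize}
\item note $\dot{\mathcal Q}\Phi=\mathcal Q\dot{\mathcal Q}\Phi$ for constrained $\Phi$;
\item write $\partial^\gamma(\dot{\mathcal Q}\Phi)=\mathcal Q\partial^\gamma(\dot{\mathcal Q}\Phi)+[\partial^\gamma,\mathcal Q](\dot{\mathcal Q}\Phi)$;
\item in the first piece, move $\mathcal Q$ onto $\partial^\gamma\Phi$ and then one derivative onto $[\partial^\gamma,\mathcal Q]\Phi$.
\end{itemize}
This mirrors the paper, which only ever estimates $D^{\alpha-1}\mathcal Z$.
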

\begin{remark}
    Note that as stated, the Theorem is written for the forward-in-time resolution of the matrix Liouville equation. However, since the matrix Liouville equation is time-reversible, we obtain the reverse estimate
    \begin{equation}
    \label{eq:GronwallTestBackward}
        \|\Phi(t)\|_{H^k}^2 \leq \|\Phi^{\mathrm{fin}}\|_{H^k}^2 e^{C_{k} (T-t)}, \quad \text{for } t\in [0,T],
    \end{equation}
    for the final datum $\Phi^{\mathrm{fin}} \in H^{k}$.
\end{remark}

\begin{remark}
By Lemma \ref{lemma:CharacterizationSubspace} it follows that if $P$ has two distinct eigenvalues $\lambda_+>\lambda_-$ one can write
\begin{equation*}
    \mathcal{Z} =\tfrac{1}{\sigma^2}[P,[P,\mathcal{Z}]], \qquad \sigma := |\lambda_+-\lambda_-|
\end{equation*}
and thus, by using the equation and $[P,\Phi]=0$, one has the explicit expression, analogous to \eqref{eq:FormulaY},
\begin{equation}
        \mathcal{Z} = \tfrac{i}{\sigma^2}\left(-[\partial_tP, \Phi] - [P,\tfrac{1}{2}(\{P,\Phi\}-\{\Phi,P\})]\right).
    \end{equation}
    This expression is then used to derive the a priori estimate for $\Phi$. Note that $\mathcal{Z}$ is of the order of one derivative of $\Phi$ such that one expects an estimate of the form
    \begin{equation*}
        \|\mathcal{Z}\|_{H^k} \leq \text{const.} \|\Phi\|_{H^{k+1}}.
    \end{equation*}
    For details see Section \ref{sec:LWPMatrixLiouville}.
    \end{remark}

\subsection{Pseudo-distance for matrix-valued density operators}
\label{sec:Distance}
Now we consider the quantity
\begin{align*}
\mathfrak{D}(t) \equiv \mathfrak{D}(R(t),F(t)) = \sup_{\mathcal{A}}\left|\Tr(R(t) \mathrm{Op}^T_{\hbar}[\Phi(t,\cdot,\cdot)]) - \iint_{\mathbb{R}^d\times \mathbb{R}^d} \tr(F(t,q,p)\Phi(t,q,p)) dq dp\right|,
\end{align*}
where $\Phi(t)$ is a solution of the dual matrix Liouville equation \eqref{eq:test equation} in the sense of Theorem \ref{thm:LWPLiouville}. We also recall that $\Tr(\,\cdot \,)$ denotes the trace in $L^2(\mathbb{R}^d,\mathbb{M}_m(\mathbb{C}))$ and $\tr(\,\cdot\,)$ denotes the trace in $\mathbb{M}_m(\mathbb{C})$. 

The supremum is taken over the set $\mathcal{A}$ given by
\begin{equation}
\label{eq:AlgebraA}
    \mathcal{A} := \left\{\|\Phi^{\mathrm{fin}}\|_{H^k} \leq 1, \quad \Phi^{\mathrm{fin}} \text{ is final datum for } \eqref{eq:test equation}\right\}.
\end{equation}
The Sobolev norms of the solution $\Phi(t)$ are then propagated in time by the a priori estimate \eqref{eq:AprioriMatrixLiouville} of Proposition \ref{thm:AprioriMatrixLiouville}.
Since (as we shall see below) 
\begin{align*}
    \partial_t \iint \tr(F(t,q,p)\Phi(t,q,p))dqdp = 0, 
\end{align*}
it follows that
\begin{align*}
      \iint \tr(F(t,q,p)\Phi(t,q,p))dqdp = \iint \tr(F^{\mathrm{in}}(q,p)\Phi^{\mathrm{in}}(q,p))dqdp.
\end{align*}
Here, $$\Phi^{\mathrm{in}}(q,p) := \Phi(0,q,p).$$
Thus, $\mathfrak{D}(t)$ can be written as
\begin{align}
\label{eq:MatrixDistance}
\mathfrak{D}(t) = \sup_{\mathcal{A}}\left|\Tr(R(t) \mathrm{Op}^T_{\hbar}[\Phi(t,\cdot,\cdot)]) - \iint_{\mathbb{R}^d\times \mathbb{R}^d} \tr(F^{\mathrm{in}}(q,p)\Phi^{\mathrm{in}}(q,p)) dq dp\right|.
\end{align}
Since the supremum is taken over the test function $\Phi(t)$ at a time $t>0$, we must express the initial datum $\Phi^{\mathrm{in}}=\Phi(0)$ in terms of $\Phi(t)$. This amounts to solving the (dual) evolution equation backward in time with terminal datum $\Phi(t)$ (and, in particular, with $\Phi^{\mathrm{fin}}=\Phi(T)$ when $t=T$).

\begin{proposition}
    The quantity $\mathfrak{D}(t)$, defined in \eqref{eq:MatrixDistance} is a pseudo-distance, i.e. it is symmetric, it satisfies the triangle inequality and $\mathfrak{D}(F(t),F(t)) = 0$.
    \begin{proof}
        First note that there is a bijective correspondence between the density matrices $R$ and the Husimi functions $\tilde{W}_{\hbar}[R]$ and we have the following identity:
        \begin{equation}
    \Tr(R(t)\mathrm{Op}^T_{\hbar}[\Phi(t)]) = \iint \tr(\tilde{W}_{\hbar}(t,q,p)\Phi(t,q,p))dqdp,
\end{equation}
where 
\begin{equation}
    \tilde{W}_{\hbar}(q,p) := \iint G_{\hbar}(q-q')G_{\hbar}(p-p')W_{\hbar}(q',p')dq'dp',
\end{equation}
is the Husimi function of $W_{\hbar}$ with $G_{\hbar}$ given by \eqref{eq:Gaussian}. We can rewrite the pseudo-distance $\mathfrak{D}(t)$ as
\begin{equation}
\label{eq:Husimi distance}
    \mathfrak{D}(\tilde{W}_{\hbar}(t),F(t)) = \sup_{\mathcal{A}}\left|\iint (\tilde{W}_{\hbar}(t,q,p)-F(t,q,p))\Phi(t,q,p)dqdp \right|.
\end{equation}
Now $$\mathfrak{D}(\tilde{W}_{\hbar}(t),F(t)) = \mathfrak{D}(F(t),\tilde{W}_{\hbar}(t)).$$ Moreover, $\tilde{W}_{\hbar}(t)=F(t)$ implies $$\mathfrak{D}(\tilde{W}_{\hbar}(t),F(t)) = 0.$$ Clearly, $\mathfrak{D}(t) \geq 0$. Finally, the triangle inequality follows from
\begin{align*}
    \mathfrak{D}(F(t),G(t)) &= \sup_{\mathcal{A}} \left|\iint (F(t,q,p)-G(t,q,p))\Phi(t,q,p)dqdp \right| \\ &= \sup_{\mathcal{A}} \left|\iint (F(t,q,p)-H(t,q,p) + H(t,q,p)-G(t,q,p))\Phi(t,q,p)dqdp \right| \\
    &= \sup_{\mathcal{A}} \left|\iint (F(t,q,p)-H(t,q,p))\Phi(t,q,p)dqdp \right. \\ &\qquad \qquad + \left. \iint (H(t,q,p)-G(t,q,p))\Phi(t,q,p)dqdp \right| \\
    &\leq \sup_{\mathcal{A}} \left|\iint (F(t,q,p)-H(t,q,p))\Phi(t,q,p)dqdp\right| \\ &\qquad \qquad + \left| \iint (H(t,q,p)-G(t,q,p))\Phi(t,q,p)dqdp \right| \\
    &= \mathfrak{D}(F(t),H(t)) + \mathfrak{D}(H(t),G(t)).
\end{align*}
Thus, $\mathfrak{D}(t)$ is a pseudo-distance.
    \end{proof}
\end{proposition}
Next differentiate the $\mathfrak{D}(t)$ with respect to $t$ and integrate over $t$:
\begin{align*}
    \mathfrak{D}(t)-\mathfrak{D}^{\mathrm{in}} &\leq \int_0^t \sup_{\mathcal{A}} \left|\Tr((\partial_t R(s)) \mathrm{Op}^T_{\hbar}[\Phi(s,\cdot,\cdot)]) + \Tr( R(s) \mathrm{Op}^T_{\hbar}[\partial_t\Phi(s,\cdot,\cdot)]) \right|ds.\\
    &\qquad + \int_0^t \sup_{\mathcal{A}}\left|\partial_t\iint_{\mathbb{R}^{2d}} \tr( F(s,q,p)\Phi(s,q,p))dqdp\right|ds,
\end{align*}
where
\begin{equation}
    \mathfrak{D}^{\mathrm{in}} := \mathfrak{D}(0) = \sup_{\mathcal{A}}\left|\Tr(R^{\mathrm{in}} \mathrm{Op}^T_{\hbar}[\Phi^{\mathrm{in}}]) - \iint_{\mathbb{R}^d\times \mathbb{R}^d} \tr(F^{\mathrm{in}}(q,p)\Phi^{\mathrm{in}}(q,p)) dq dp\right|.
\end{equation}
To proceed we need the following Lemma:
\begin{lemma}
    Let $F\in L^{\infty}([0,T],H^1(\mathbb{R}^d_q \times \mathbb{R}^d_p))$ be a solution in the sense of distributions of the matrix Liouville equation \eqref{eq:matrix liouville equation} with initial datum $0 \leq F^{\mathrm{in}} = (F^{\mathrm{in}})^{\ast}\in H^1$ such that $[P,F^{\mathrm{in}}] = 0$ and let $\Phi \in C([0,T],H^s(\mathbb{R}^d_q \times \mathbb{R}^d_p))$, $s\geq 2$ be a solution of the dual matrix Liouville equation \eqref{eq:test equation} with final datum $\Phi^{\mathrm{fin}}$ with $\Phi^{\mathrm{in}} := \Phi(0)$ and $[P,\Phi^{\mathrm{fin}}]=0$. Then
    \begin{equation}
        \partial_t \iint_{\mathbb{R}^{2d}} \tr(F(t,q,p)\Phi(t,q,p))dqdp = 0.
    \end{equation}
    \begin{proof}
    By direct computation,
        \begin{align*}
    \partial_t \tr(F\Phi) &= \tfrac{1}{2}\tr((\{P,F\}-\{F,P\})\Phi + F(\{P,\Phi\}-\{\Phi,P\})) + \tr(\mathfrak{X}\Phi+F\mathcal{L}) \\
    &= \tfrac{1}{2}\tr(\{P,F\Phi\}-\{F\Phi,P\}) + \tr(\mathfrak{X}\Phi+F\mathcal{L}),
\end{align*}
where we used that $[F,P] = [\Phi,P] = 0$.
Therefore
\begin{align}
\begin{split}
    \partial_t \iint \tr(F\Phi) &= \iint \tr(\tfrac{1}{2}\{P,F\Phi\}-\{F\Phi,P\}) + \iint \tr(\mathfrak{X}\Phi+F\mathcal{L})  \\ &= \iint \tr(\mathfrak{X}\Phi+F\mathcal{L})  \\
    &= 0. 
    \end{split}\label{eq:PartialFPIsZero}
\end{align}
The last step needs justification since we need to show that
\begin{equation}
\label{eq:Pairing_X_Phi}
    \sup_{t\in[0,T]}\iint \tr(\mathfrak{X}(t)\Phi(t))
\end{equation}
is well-defined. Indeed, by the bound \eqref{eq:SpaceTimeGradientWigner} from Propostion \ref{lemma:GronwallLagrangeWigner} we obtain that $W_{\hbar} \in L^{\infty}([0,T],H^1_{q,p})$. By weak convergence we obtain that $F \in L^{\infty}([0,T],H^1_{q,p})$. By \eqref{eq:formula Xer Liouville} and \eqref{eq:FormulaY} we obtain
\begin{align*}
    \|\mathfrak{X}\|_{L^2_{q,p}} &\leq \left\|\tfrac{1}{\sigma^2} [P,[\partial_tP,F]] + \tfrac{1}{\sigma^2}[P,[P,[\tfrac{1}{2}(\{P,F\}-\{F,P\})]]] \right\|_{L^2_{q,p}} \\
    &\leq C \|\tfrac{1}{\sigma}P\|_{L^{\infty}_{q,p}}\|\tfrac{1}{\sigma}\partial_t P\|_{L^{\infty}_{q,p}}\|F\|_{L^2_{q,p}} + \|\tfrac{1}{\sigma}P\|_{L^{\infty}_{q,p}}^2 \|\nabla_{q,p}P\|_{L^{\infty}_{q,p}} \|F\|_{\dot{H}^1_{q,p}} \\ 
    &\leq C\|F\|_{{H}^1_{q,p}}.
\end{align*}
Since the Dirac symbol is linear in $p$ we can bound $\|\nabla_{q,p}P\|_{L^{\infty}_{q,p}}$. Therefore
$$\mathfrak{X} \in L^{\infty}([0,T],L^2_{q,p})$$ and since
$\Phi \in C([0,T],H^{s}_{q,p})$ the pairing \eqref{eq:Pairing_X_Phi} is well-defined.  
Moreover, $F\in L^{\infty}([0,T],L^2)$ and $\mathcal{L} =\tfrac{1}{i}[P,\mathcal{Z}]\in C([0,T],H^{s-1})$ by Lemma \ref{eq:PropertiesEstimatesZdelta}. Therefore,
$$\sup_{t\in[0,T]}\|\tr(F(t)\mathcal{L}(t))\|_{L^1_{q,p}}\leq C.$$
Then, using \eqref{eq:formula Xer Liouville} and \eqref{eq:formula Xer test} and the fact that $[F(t),P(t)] = [\Phi(t),P(t)] = 0$ for all $t\in [0,T]$ we obtain
\begin{align*}
    \iint \tr(\mathfrak{X}(t) \Phi(t) + F (t)\mathcal{L}(t)) dq dp &= \iint \tr(\tfrac{1}{i}[P(t),\mathcal{Y}(t)] \Phi + F(t) \tfrac{1}{i}[P(t),\mathcal{Z}(t)]) dq dp \\
    &= -\iint\tr(\mathcal{Y}(t)\tfrac{1}{i}[P(t),\Phi(t)]  + \tfrac{1}{i}[P(t),F(t)]\mathcal{Z}(t)) dq dp \\
    &= 0.
\end{align*}
This completes the proof.
    \end{proof}
\end{lemma}
Using the Lemma and the dual equation \eqref{eq:test equation}, we obtain
\begin{align}
\begin{split}
\mathfrak{D}(t) \leq \mathfrak{D}^{\mathrm{in}} &+ \sup_{\mathcal{A}} \int_0^t \left|\Tr(R(s)\left(\tfrac{i}{\hbar}[H(s),\mathrm{Op}^T_{\hbar}[\Phi(s)]]+\mathrm{Op}^T_{\hbar}\left[\tfrac{1}{2}\left(\{P(s),\Phi(s)\} - \{\Phi(s),P(s)\}\right)\right]\right))\right| ds \\ &\qquad + \sup_{\mathcal{A}}\int_0^t\left|\Tr(R(s)\mathrm{Op}^T_{\hbar}[\tfrac{1}{i}[P(s),\mathcal{Z}(s)]) \right|ds .
\end{split}
\label{eq:GrönwallDistance}
\end{align}
We will study the right hand side of \eqref{eq:GrönwallDistance} in Section \ref{sec:DistanceDirac}.

\begin{remark}Compare this to the scalar equation. In that case the scalar distance  (which now is a bona fide distance as there are no off-diagonal terms) $D(t)$ satisfies
\begin{align*}
D(t) \leq D^{\mathrm{in}} &+\int_0^t \sup_{a^{\mathrm{fin}} \in \mathcal{A}'}\left| \Tr(R(s)\left(\tfrac{i}{\hbar}\left[-\tfrac{\hbar^2}{2}\Delta +V(s),\mathrm{Op}^T_{\hbar}[a(s)]\right]+\mathrm{Op}^T_{\hbar}\left[\{\tfrac{1}{2}|p|^2+V(q),a(s)\}\right]\right))\right|ds .
\end{align*}
where $\mathcal{A}'$ is the analogous algebra to $\mathcal{A}$ but with the matrix Liouville equation replaced by the scalar Liouville equation. Naturally, since $P$ and $W_{\hbar}$ are scalar, they commute and therefore $\mathcal{L}=\frac{1}{i}[P,\mathcal{Z}]=0$.
\end{remark}

\begin{remark}
Recall the pseudo-distance $\mathfrak{D}$ written in terms of the Husimi function $\tilde{W}_{\hbar}$, i.e. \eqref{eq:Husimi distance}. Naturally, since $\tilde{W}_{\hbar}$ converges to the Wigner measure $F$ (in fact, this is one method to show that the Wigner measure is positive definite), the pseudo-distance between $\tilde{W}_{\hbar}$ and $F$ should be small.
However, dealing with the equation for the Husimi function seems problematic since there are issues with its wellposedness \cite{emamirad_existence_2001} and the propagation of its derivatives. Moreover, the Husimi equation involves a number of convolutions with Gaussians, which might lead to less traceable computations. In the development of quantum optimal transport there have been notions to compare the positive (in the scalar case) or positive definite (in the matrix case) Husimi function with positive (or positive definite) classical measures such as in \eqref{eq:Husimi distance}, albeit for the scalar case. For a discussion of this ansatz see \cite{zyczkowski_monge_1998}. Recently, in \cite{cotler_egorov_2025}, the authors prove an Egorov theorem for the scalar quantum Wasserstein distance.
\end{remark}

\begin{remark}
    In recent years, quantum optimal transport has been developed using operator-valued couplings of density matrices or density matrices and classical phase space distributions, cf. \cite{golse_optimal_2022}. The extension of optimal transport (classical or quantum) to matrix-valued densities is an area of ongoing research: \cite{brenier_optimal_2020,chen_matrix_2018, ning_metrics_2014, ning_matrix-valued_2015}. It is not straightforward to extend the notion of couplings (either classical or quantum) to the matrix-valued case, for instance consider the discussion in Section II.C of \cite{ning_matrix-valued_2015} for classical matrix-valued measures. The same is true for the quantum case, cf. the counterexample in Appendix \ref{sec:Counterexample}.  In \cite{ning_metrics_2014} a dual formulation is proposed for the classical matrix-valued optimal transport and which resembles our approach: Take two matrix-valued measures $\mu_1,\mu_2$.  Let $f$ be a  Hermitian Lipschitz matrix-valued function and let $I$ be an interval. Then the distance proposed in \cite{ning_metrics_2014} is defined as
    \begin{equation}
        \sup_{f}\left\{\int_I \tr(f(d\mu_1-d\mu_2)) \colon \mathrm{Lip}(f) \leq 1, \, \|f\|_{\infty} \leq C  \right\},\label{eq:ClassicalMatrixDistance}
    \end{equation}
    which goes back to the work by Connes \cite{connes_compact_1989}. Our ansatz provides a new approach to the quantum case that is similar to the duality approach for classical matrix-valued densities given by \eqref{eq:ClassicalMatrixDistance} by using the Lagrange multiplier formulation of \cite{golse_semi-classical_2026}.
\end{remark}

\subsection{Solutions of the Dirac equation}
Under suitable assumptions on the time-dependent potentials $A,V$ one can construct a unitary two-parameter group, generated by
\begin{equation}\label{eq: dirac hamiltonian}H(t) = \alpha\cdot(-i\hbar \nabla -A(t,x))+\beta+V(t,x).\end{equation}
We quote the following theorem from \cite{golse_semi-classical_2026}.

\begin{proposition}
\label{thm:existence dirac}
Let $r >3$ and $A,V  \in C^1(\mathbb{R}, L^r+L^{\infty})$. Then for all $t \in \mathbb{R}$ the Dirac operator \eqref{eq: dirac hamiltonian} with domain $\mathcal{D}(H(t)) = H^1(\mathbb{R}^3,\mathbb{C}^4)$ is self-adjoint. Moreover, there exists a  jointly strongly continuous two-parameter family  of unitary propagators $U_{H}(t;s)$ satisfying 
\begin{align}
i \hbar \partial_t U_H(t;s) \psi = H(t) U_H(t;s) \psi  
\quad \text{for all} \;  \psi \in \mathcal{D}(H(t)),
\end{align}
and $U_H(s;s) = 1$ for all $s \in \mathbb{R}$.
\end{proposition}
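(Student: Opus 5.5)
The plan is to treat $H(t)$ as a perturbation of the free Dirac operator. First self-adjointness on $H^1$ follows from Kato--Rellich; then the propagator comes from Kato's theorem for time-dependent generators with a constant domain. Throughout, $\hbar>0$ is fixed, so only the constants depend on it.

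\emph{Step 1 (free operator).} Set $H_0:=\alpha\cdot(-i\hbar\nabla)+\beta$. In Fourier variables it is multiplication by the Hermitian matrix $\alpha\cdot\hbar\xi+\beta$. Since $\alpha_j\alpha_k+\alpha_k\alpha_j=2\delta_{jk}$ and $\alpha_j\beta+\beta\alpha_j=0$, this matrix squares to $(1+\hbar^2|\xi|^2)I_4$, so its eigenvalues are $\pm\sqrt{1+\hbar^2|\xi|^2}$. Hence $H_0$ is self-adjoint with domain exactly $H^1(\mathbb{R}^3,\mathbb{C}^4)$, and $\|H_0\psi\|_{L^2}$ is equivalent to $\|\psi\|_{H^1}$.

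\emph{Step 2 (Kato--Rellich).} Let $W(t):=-\alpha\cdot A(t,x)+V(t,x)$. Because $A$ and $V$ are real, $W(t)$ is a symmetric multiplication operator. I will show it is $H_0$-bounded with relative bound $0$. Write a coefficient as $f=f_1+f_2$ with $f_1\in L^r$ and $f_2\in L^\infty$. The $L^\infty$ part is bounded on $L^2$. For the $L^r$ part, Hölder gives
\[
\|f_1\psi\|_{L^2}\le\|f_1\|_{L^r}\|\psi\|_{L^q},\qquad q=\tfrac{2r}{r-2}\in(2,6),
\]
where $q<6$ is exactly where $r>3$ is used. By Gagliardo--Nirenberg (Sobolev embedding plus interpolation between $L^2$ and $L^6$), for every $\varepsilon>0$
\[
\|\psi\|_{L^q}\le\varepsilon\|\nabla\psi\|_{L^2}+C_\varepsilon\|\psi\|_{L^2}.
\]
Alternatively, truncate $f_1$ into a part with small $L^r$ norm plus a bounded part. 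Either way we get $\|W(t)\psi\|\le\varepsilon\|H_0\psi\|+C_\varepsilon(t)\|\psi\|$. Kato--Rellich then gives self-adjointness of $H(t)$ on $\mathcal{D}(H(t))=H^1$. Since $A,V\in C^1(\mathbb{R},L^r+L^\infty)$, the constants are locally uniform in $t$. So on compact time intervals the graph norms of $H(t)$ are uniformly equivalent to the $H^1$ norm.

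\emph{Step 3 (propagator).} I apply Kato's hyperbolic evolution theorem to the generators $-\tfrac{i}{\hbar}H(t)$ on $X=L^2$ with $Y=H^1$.
\begin{itemize}
\item Stability: each $-\tfrac{i}{\hbar}H(t)$ generates a unitary group, so the stability constants equal $1$.
\item Constant domain: $Y\subset\mathcal{D}(H(t))$ for all $t$, by Step 2.
\item Regularity in $t$: $t\mapsto H(t)\in\mathcal{L}(Y,X)$ must be $C^1$, or at least strongly $C^1$. This follows from the Hölder/Sobolev bound of Step 2 applied to $\partial_tA,\partial_tV\in C(\mathbb{R},L^r+L^\infty)$. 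The key estimate is $\|(\partial_tW(t))\psi\|_{L^2}\le C\|\psi\|_{H^1}$, locally uniformly in $t$.
\end{itemize}
Kato's theorem then yields a strongly continuous $U_H(t;s)$ with $U_H(s;s)=1$, $U_H(t;s)Y\subset Y$, and $i\hbar\partial_tU_H(t;s)\psi=H(t)U_H(t;s)\psi$ for $\psi\in Y$. For unitarity, note that $\frac{d}{dt}\|U_H(t;s)\psi\|^2=0$ by self-adjointness, so each $U_H(t;s)$ is an isometry. The cocycle property $U_H(t;s)U_H(s;t)=1$ shows it is invertible, hence unitary. Joint strong continuity in $(t,s)$ is also part of Kato's conclusion.

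The main obstacle is Step 3, namely checking Kato's hypotheses uniformly. The delicate points are the uniform equivalence of graph norms on compact time intervals and the $\mathcal{L}(H^1,L^2)$-differentiability of $W(t)$ when the coefficients are only $C^1$ in time with values in $L^r+L^\infty$. I would handle both with the same $\varepsilon$-relative-bound estimate from Step 2. For the differentiability I would use difference quotients, which converge in $L^r+L^\infty$ and hence in $\mathcal{L}(H^1,L^2)$.
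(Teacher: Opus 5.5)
Your argument is correct and is the standard one: Kato--Rellich with relative bound zero, obtained from Hölder with $q=2r/(r-2)<6$ and Gagliardo--Nirenberg, followed by Kato's constant-domain evolution theorem, using that $t\mapsto H(t)\in\mathcal{L}(H^1,L^2)$ is norm-$C^1$. The paper itself gives no proof and simply quotes the result from \cite{golse_semi-classical_2026}.

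The one step to tighten is $t<s$. Kato's theorem only yields $U_H(t;s)$ for $s\le t$, so you cannot presuppose the cocycle identity. Instead, build the backward family from the time-reversed generators $\tfrac{i}{\hbar}H(-t)$. Then use uniqueness of $H^1$-valued solutions, which your isometry computation gives, to show that it inverts the forward family.
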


\begin{corollary}
\label{thm:corollary existence}
Let $A,V$ and $U_{H}$ be as in Proposition~\ref{thm:existence dirac}.  Moreover,  let  $ R^{\mathrm{in}}$ be a density operator such that $$\Tr\left( \sqrt{1 - \hbar^2\Delta} R^{\mathrm{in}} \sqrt{1 - \hbar^2\Delta} \right) < + \infty.$$ Then  \begin{equation}
\label{eq:evolution R}R(t) := U_H(t;0) R^{\mathrm{in}} U_H(0;t),\end{equation} is the unique solution to the von Neumann equation \eqref{eq:vonNeumannIntro} with initial data $R^{\mathrm{in}}$.
\end{corollary}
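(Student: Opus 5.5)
The plan is to verify three things for $R(t) := U_H(t;0)R^{\mathrm{in}}U_H(0;t)$: that it is a well-defined density operator, that it satisfies \eqref{eq:vonNeumannIntro} in the appropriate strong sense, and that it is the only such solution. Proposition \ref{thm:existence dirac} supplies everything needed about the propagator.

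\emph{Well-definedness and regularity.} Unitarity of $U_H(t;0)$ immediately gives $R(t)=R(t)^\ast\geq 0$ and $\Tr R(t)=\Tr R^{\mathrm{in}}$. The finiteness of $\Tr(\sqrt{1-\hbar^2\Delta}R^{\mathrm{in}}\sqrt{1-\hbar^2\Delta})$ means the following. Write $R^{\mathrm{in}}=\sum_j\lambda_j|\psi_j\rangle\langle\psi_j|$ with $\lambda_j\geq0$. Then each $\psi_j\in H^1=\mathcal{D}(H(0))$ and $\sum_j\lambda_j\|\psi_j\|_{H^1}^2<\infty$.

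We next propagate this $H^1$ bound. Since $H(t)-H(0)=-\alpha\cdot(A(t)-A(0))+V(t)-V(0)$ is a relatively bounded perturbation (by Hardy/Sobolev, as $A,V\in L^r+L^\infty$ with $r>3$), the graph norms $\|(H(t)+i)\psi\|$ are uniformly equivalent to $\|\psi\|_{H^1}$ on $[0,T]$. The Kato-type construction behind Proposition \ref{thm:existence dirac} also yields that $U_H(t;0)$ maps $H^1$ into itself with a locally uniform bound, obtained by differentiating $\|(H(t)+i)U_H(t;0)\psi\|^2$ and using $\partial_tH\in L^r+L^\infty$. It follows that $\psi_j(t):=U_H(t;0)\psi_j$ stays in $H^1$, and $\sum_j\lambda_j\|\psi_j(t)\|_{H^1}^2\leq C_T$. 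Hence $R(t)=\sum_j\lambda_j|\psi_j(t)\rangle\langle\psi_j(t)|$ has the same weighted trace bound.

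\emph{Equation.} For each $j$, $i\hbar\partial_t\psi_j(t)=H(t)\psi_j(t)$ holds in $L^2$. Differentiating the rank-one projections termwise gives
\begin{equation*}
i\hbar\partial_t|\psi_j\rangle\langle\psi_j| = |H\psi_j\rangle\langle\psi_j|-|\psi_j\rangle\langle H\psi_j| .
\end{equation*}
The weighted bound makes the series of derivatives converge in trace norm as operators $H^1\to H^{-1}$, or equivalently for $(1-\hbar^2\Delta)^{-1/2}R(t)$ in trace class. This yields $i\hbar\partial_tR=[H(t),R(t)]$ with $R(0)=R^{\mathrm{in}}$.

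\emph{Uniqueness.} Let $\tilde R$ be another solution in the same class and set $S(t):=U_H(0;t)\tilde R(t)U_H(t;0)$. Testing against $\phi,\chi\in H^1$, we differentiate $\langle U_H(t;0)\phi,\tilde R(t)U_H(t;0)\chi\rangle$. The derivative of the propagators produces $\tfrac{i}{\hbar}\langle H\phi(t),\tilde R\chi(t)\rangle-\tfrac{i}{\hbar}\langle\phi(t),\tilde RH\chi(t)\rangle$, which exactly cancels the von Neumann term. Hence $\partial_tS=0$ weakly on a dense set, so $S(t)=R^{\mathrm{in}}$ and $\tilde R=R$.

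The main obstacle is the first step, namely justifying that $U_H(t;0)$ preserves $H^1$ with a uniform bound under only $C^1(\mathbb{R},L^r+L^\infty)$ potentials. Self-adjointness with a time-dependent domain does not by itself give this. I would prove it via Kato's theorem on stable families: the domain $H^1$ is constant in $t$, $\partial_t H$ is relatively bounded, and these two facts suffice. If the proof of Proposition \ref{thm:existence dirac} in \cite{golse_semi-classical_2026} already provides this, I would cite it directly.
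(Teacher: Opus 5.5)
The paper gives no proof of this corollary; it is quoted, together with Proposition~\ref{thm:existence dirac}, from \cite{golse_semi-classical_2026}. There is therefore no in-paper argument to compare against. Your argument is the standard one and it is correct.

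Kato's theorem for a family of self-adjoint operators with constant domain applies for two reasons. The generators $\tfrac{1}{i\hbar}H(t)$ are skew-adjoint, hence stable. The map $t\mapsto -\alpha\cdot A(t)+V(t)$ is $C^1$ in $\mathcal{L}(H^1,L^2)$, because multiplication by an $L^r+L^\infty$ function with $r>3$ is infinitesimally bounded from $H^1$ to $L^2$ by H\"older and Sobolev (Hardy's inequality plays no role). The theorem yields $U_H(t;0)H^1\subset H^1$ with locally uniform bounds, and $t\mapsto U_H(t;0)\psi$ in $C^1(L^2)\cap C(H^1)$ for $\psi\in H^1$. That is all your three steps use. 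Your reading of the trace hypothesis as $\sum_j\lambda_j\|\psi_j\|_{H^1}^2<\infty$ (over $\lambda_j>0$) is also right.

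Two small corrections.

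First, in the equation step the differentiated series converges in the trace norm of $L^2$ itself. By Cauchy--Schwarz, $\sum_j\lambda_j\|H(t)\psi_j(t)\|\,\|\psi_j(t)\|\le C(\sum_j\lambda_j)^{1/2}(\sum_j\lambda_j\|\psi_j(t)\|_{H^1}^2)^{1/2}$. Hence $H(t)R(t)$ and the closure of $R(t)H(t)$ are trace class, and $i\hbar\partial_tR=[H(t),R(t)]$ holds in $\mathcal{L}^1$. The difference quotients are dominated via $\|\psi_j(t+h)-\psi_j(t)\|\le C|h|\,\|\psi_j\|_{H^1}$. Your phrase ``equivalently for $(1-\hbar^2\Delta)^{-1/2}R(t)$ in trace class'' does not express this correctly.

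Second, in the uniqueness step you should fix the class, for instance bounded, strongly continuous $\tilde R(t)$ satisfying the equation in integrated form as sesquilinear forms on $H^1\times H^1$. You should also justify the product rule for $\langle U_H(t;0)\phi,\tilde R(t)U_H(t;0)\chi\rangle$. The cross terms are controlled by $\|\tilde R(s)\|\le 1$, $\|H(s)\chi\|\le C\|\chi\|_{H^1}$, and continuity of $t\mapsto U_H(t;0)\chi$ in $H^1$. This part never uses a weighted trace bound on $\tilde R$, so uniqueness in fact holds in that larger class.
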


By Plancherel's theorem we deduce the following Corollary.
\begin{corollary}
\label{thm:existence wigner}
Let $A,V$ be as in Proposition \ref{thm:existence dirac} and $R_{\mathrm{in}}$ as in Corollary \ref{thm:corollary existence}. Then the Wigner transform $W_{\hbar}(t)= W_{\hbar}[R(t)]$, where $R(t)$ is given by \eqref{eq:evolution R}, satisfies the Wigner equation \eqref{eq:WignerIntro} in the sense of distributions, where $W_{\hbar}(0) = W_{\hbar}^{\mathrm{in}} = W_{\hbar}[R^{\mathrm{in}}]$. The evolution of $W_{\hbar}(t)$ conserves mass, i.e.
\begin{equation}
    \|W_{\hbar}(t)\|_{L^2} = \|W_{\hbar}^{\mathrm{in}}\|_{L^2}.
\end{equation}
for all $t$.
\end{corollary}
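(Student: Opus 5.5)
The statement to prove is Corollary \ref{thm:existence wigner}: $W_{\hbar}(t)=W_{\hbar}[R(t)]$ solves the Wigner equation \eqref{eq:WignerIntro} in the sense of distributions, and $\|W_{\hbar}(t)\|_{L^2}$ is conserved. The plan has two independent parts: an isometry identity that gives conservation, and a transform of the von Neumann equation that gives the Wigner equation. Throughout, $R(t)=U_H(t;0)R^{\mathrm{in}}U_H(0;t)$ is the solution provided by Corollary \ref{thm:corollary existence}.

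\emph{Step 1: the Hilbert--Schmidt isometry.} The matrix-valued Wigner transform is, up to a linear change of variables $(x,y)\mapsto(x+\tfrac{y}{2},x-\tfrac{y}{2})$ with unit Jacobian, a partial Fourier transform of the kernel of $R$ in the difference variable. Applying Plancherel's theorem entrywise in $\mathbb{M}_m(\mathbb{C})$ gives
\begin{equation*}
\|W_{\hbar}[R]\|_{L^2(\mathbb{R}^d_x\times\mathbb{R}^d_\xi)}=(2\pi\hbar)^{-d/2}\|R\|_{\mathrm{HS}}.
\end{equation*}
The exact power of $2\pi\hbar$ depends on the normalization in \eqref{eq:WignerTransform}, but it does not depend on $t$. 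This is the same scaling used in Remark \ref{remark:DilutedMixedStates}.

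\emph{Step 2: conservation of the $L^2$ norm.} The Hilbert--Schmidt class is a two-sided ideal, and the norm is invariant under multiplication by unitaries on either side. Since $U_H(t;0)$ and $U_H(0;t)=U_H(t;0)^{\ast}$ are unitary by Proposition \ref{thm:existence dirac}, we get $\|R(t)\|_{\mathrm{HS}}=\|R^{\mathrm{in}}\|_{\mathrm{HS}}$. Combined with Step 1, this yields $\|W_{\hbar}(t)\|_{L^2}=\|W_{\hbar}^{\mathrm{in}}\|_{L^2}$ for all $t$.

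\emph{Step 3: the Wigner equation.} Let $\Phi\in C_c^\infty(\mathbb{R}_t\times\mathbb{R}^{2d},\mathbb{M}_m(\mathbb{C}))$ be a test function.
\begin{enumerate}
\item Rewrite $\iint \tr(W_{\hbar}(t)\Phi(t))$ as $\Tr(R(t)\,\mathrm{Op}^W_{\hbar}[\Phi(t)])$, up to normalization.
\item Use the von Neumann equation in weak form, which is justified by the domain condition $\Tr(\sqrt{1-\hbar^2\Delta}R^{\mathrm{in}}\sqrt{1-\hbar^2\Delta})<\infty$ and $\mathcal{D}(H(t))=H^1$.
\item Compute the Weyl symbol of the commutator $[H,\cdot]$ by the Moyal expansion. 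Because $P$ is linear in $\xi$ and $A,V$ are smooth in $x$, the expansion produces $\tfrac{1}{i\hbar}[P,W_{\hbar}]+\tfrac12(\{P,W_{\hbar}\}-\{W_{\hbar},P\})$. The remainder $r_{\hbar}$ consists of the higher-order $x$-derivative terms of $A$ and $V$.
\end{enumerate}
\textbf{Main obstacle.} The delicate point is making this remainder rigorous at finite regularity. I would handle it by writing the $x$-dependent part exactly as a finite-difference/Taylor integral in the Fourier variable, as in the scalar Schrödinger case.
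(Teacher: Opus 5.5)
Your proposal is correct and matches the paper's argument: the paper's whole proof is ``By Plancherel's theorem'', i.e.\ $\|W_{\hbar}[R]\|_{L^2}=(2\pi\hbar)^{-d/2}\|R\|_{\mathrm{HS}}$ together with the (implicit) unitary invariance of $\|R(t)\|_{\mathrm{HS}}$, which are exactly your Steps 1--2. For the Wigner equation the paper does not re-derive anything and simply imports the exact finite-difference derivation of $r_{\hbar}$ from the appendix of \cite{golse_semi-classical_2026}, which is what your Step 3 sketches. One caveat: under the hypotheses of Proposition \ref{thm:existence dirac} the potentials are not smooth in $x$, so you should use the exact finite-difference form rather than a truncated Moyal expansion, as your ``main obstacle'' remark already anticipates.
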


\subsection{The remainder term of the Wigner equation}
\label{sec:Remainder}
Recall the Wigner equation \eqref{eq:WignerIntro}
\begin{equation}
    \partial_t W_{\hbar} = \tfrac{1}{i\hbar}[P,W_{\hbar}]+ \tfrac{1}{2}(\{P,W_{\hbar}\}-\{W_{\hbar},P\}) + r_{\hbar}[W_{\hbar}] \,,\qquad W_{\hbar}(0)=W_{\hbar}[R^{\mathrm{in}}].
\end{equation}
In order to prove the convergence rate of the commutator later on we need the explicit form of the remainder $r_{\hbar}$. For the derivation we refer to the Appendix in \cite{golse_semi-classical_2026}. The remainder $r_{\hbar}$ is given by
\begin{equation}
    r_{\hbar}[W_{\hbar}] := \hbar \Theta[A_k][\alpha_k,W^{\hbar}] +\frac{1}{2}(-\nabla_x A_k\cdot \nabla_{\xi}-\theta[A_k])[\alpha_k, W^{\hbar}]_+   +(-\nabla V \cdot \nabla_{\xi} -\theta [V])W^{\hbar}.
    \label{eq:Remainder}
\end{equation}
The pseudo-differential operators $\Theta$ and $\theta$ are given by (cf. Section \ref{sec:Notation}, equations \eqref{eq:FirstOrderPseudoDiff} and \eqref{eq:SecondOrderFinDiff})
\begin{align*}
    (\theta[F]f)(x,\xi) &= \frac{1}{(2\pi)^d}\int e^{i(\xi-\eta)\cdot y} \delta[F]_j(x,y)\partial_{\eta_j}f(x,\eta)d y d\eta,
\end{align*}
where
\begin{equation*}
    \delta[F]_j(x,y) = \frac{F\left(x+\tfrac{\hbar y}{2}\right)-F\left(x-\tfrac{\hbar y}{2}\right)}{\hbar|y|}\frac{y_j}{|y|},
\end{equation*}
and
\begin{align*}
    (\Theta[F]f)(x,\xi) &= \frac{1}{(2\pi)^d}\int e^{i(\xi-\eta)\cdot y} \Delta[F](x,y)\Delta_{\eta}f(x,\eta)d y d\eta,
\end{align*}
where
\begin{equation*}
     \Delta[F](x,y) = i\frac{F\left(x+\frac{\hbar y}{2}\right)-2F(x)+F\left(x-\frac{\hbar y}{2}\right)}{2\hbar^2|y|^2}. 
\end{equation*}


\section{Notations and definitions}\label{sec:Notation}

\subsection{Density matrices and Wigner transforms}
Let 
\begin{equation}
    \mathfrak{H}^{d,m} := L^2(\mathbb{R}^d, \mathbb{C}^m).
\end{equation}
We denote the Hilbert space of Dirac 4-spinors on $\mathbb{R}^3$ by
\begin{equation}
    \mathfrak{H} \equiv \mathfrak{H}^{3,4} = L^2(\mathbb{R}^3, \mathbb{C}^4).
\end{equation}
Let $\mathbb{M}_m(\mathbb{C})$ be the space of $m\times m$ matrices with entries in $\mathbb{C}$ and let
\begin{equation}
    \langle v , w \rangle_{\mathbb{C}^m} := v^{\ast} w = \sum_{j=1}^m \overline{v_j}w_j, \quad v,w \in \mathbb{C}^m
\end{equation}
be the inner product on $\mathbb{C}^m$, and $v^* := \overline{v}^\top$ be the conjugate transpose of $v$.
The set of bounded operators on $\mathfrak{H}^{d,m}$ is denoted by $\mathcal{L}(\mathfrak{H}^{d,m})$. The set of trace-class operators on $\mathfrak{H}$ is denoted by $\mathcal{L}^1( \mathfrak{H})$. 

In particular we denote the trace in $\mathcal{L}(\mathfrak{H}^{d,m})$ by $\Tr(\cdot) \equiv \Tr_{\mathfrak{H}^{d,m}}(\cdot)$ and the trace in $\mathbb{M}_m(\mathbb{C})$ by $\tr(\cdot) \equiv \tr_{\mathbb{C}^m}(\cdot)$, In particular, for a trace-class operator $A \in \mathcal{L}^1(\mathfrak{H}^{d,m})$ with kernel $K(x,y)\in L^2(\mathbb{R}^d_x\times\mathbb{R}^d_y,\mathbb{M}_m(\mathbb{C}))$ we have
\begin{equation}
    \Tr(A) = \int_{\mathbb{R}^d_x} \tr_{\mathbb{C}^m}(K(x,x)) dx = \int_{\mathbb{R}^d_x} \sum_{j=1}^m K_{jj}(x,x) dx.
\end{equation}

A \emph{density operator} $R\in \mathcal{L}^1( \mathfrak{H}^{d,m})$ is a self-adjoint ($R=R^*$), nonnegative ($\langle R\psi,\psi\rangle \geq 0$ for all $\psi \in \mathfrak{H}^{d,m}$) operator 
with trace $$\Tr(R) = \int_{\mathbb{R}^d_x} \tr_{\mathbb{C}^m}(R(x,x)) dx = \int_{\mathbb{R}^d_x} \sum_{j=1}^m R_{jj}(x,x) dx = 1.$$ A rank-one density operator is a \emph{pure state}.  On $\mathfrak{H}^{d,m}$, the operator $R$ is an integral operator 
with integral kernel $(x,y)\mapsto R(x,y) \in L^2(\mathbb{R}_x^d\times \mathbb{R}_y^d,\mathbb{M}_m(\mathbb{C}))$, which we abusively denote by the same letter. Since $R=R^*$ is trace-class, it is compact and by the spectral theorem 
there is a sequence of real eigenvalues $(\lambda_j)_{j\ge 1}$ and a Hilbert basis (i.e. a complete orthonormal system) $\{\psi_j\,:\,j\ge 1\} \subset \mathfrak{H}^{d,m}$ such that
\begin{equation}\label{SpecDecR}
    R(x,y) = \sum_{j=1}^{\infty} \lambda_j \psi_j(x) \overline{\psi_j(y)}^\top.
\end{equation}
Since $R$ is a density operator,
\begin{equation}\label{CondLambda}
\lambda_j \geq 0\quad\text{since }R=R^*\ge 0,\quad\text{ and }\sum_{j=1}^{\infty}\lambda_j 
=\Tr(R)=1,
\end{equation}
since $\{\psi_j\}$ is an orthonormal family. In particular, the restriction of the integral kernel to the diagonal, 
i.e. $R(x,x)$, is a well-defined element of $L^1(\mathbb{R}^d_x, \mathbb{M}_m(\mathbb{C}))$ and we can define the density function $\rho := \tr(R(x,x))\in L^1(\mathbb{R}_x^d,\mathbb{R})$.

Let $R$ be a density operator. Then the Wigner transform $W_{\hbar}[R]$ is defined as
\begin{equation}
\label{eq:WignerTransform}
   W_{\hbar}[R](x,\xi) := \frac{1}{(2\pi)^d}\int e^{-i\xi\cdot y}R\left(x+\tfrac{\hbar y}{2}, x-\tfrac{\hbar y}{2}\right) dy.
\end{equation}

\subsection{Brackets}
For two matrices $A,B$, the commutator is defined as $[A,B] := AB-BA$ and the anticommutator as $[A,B]_+ := AB+BA$.  For two (not necessarily commuting) matrix-valued functions $F,G$ on phase space $\mathbb{R}^d_x \times \mathbb{R}^d_{\xi}$ we define the Poisson bracket as
\begin{equation}
    \{F,G\} := \sum_{k=1}^d \partial_{x_k} F \partial_{\xi_k} G-\partial_{\xi_k} F \partial_{x_k} G,
\end{equation}
where the derivatives are taken in the sense of distributions. Note that for matrix-valued functions the order is important and in general, $\{F,G\}\neq-\{G,F\}$. Depending on the context, $\{\,\cdot\,,\cdot\,\}$ may also denote the standard Poisson bracket for scalar functions, in which case $\{f,g\} = -\{g,f\}$.
On the space $\mathcal{S}'(\mathbb{R}_x^d\times \mathbb{R}^d_{\xi},\mathbb{M}_m(\mathbb{C}))$ we define the duality bracket
\begin{equation}
    \langle A,B\rangle := \int \tr(A^{\ast} B) dx d \xi, \quad A \in \mathcal{S}'(\mathbb{R}_x^d\times \mathbb{R}^d_{\xi},\mathbb{M}_m(\mathbb{C})), B\in \mathcal{S}(\mathbb{R}_x^d\times \mathbb{R}^d_{\xi},\mathbb{M}_m(\mathbb{C})).
\end{equation}

\subsection{Pseudo-differential operators and Toeplitz operators}
We use the following notation for the first order finite difference
\begin{equation}
    \label{eq:FirstOrderFinDiff}\delta[F]_j(x,y):= \frac{F\left(x+\tfrac{\hbar y}{2}\right)-F\left(x-\tfrac{\hbar y}{2}\right)}{\hbar|y|}\frac{y_j}{|y|}
\end{equation}
We denote the associated pseudo-differential operator by (with summation over repeated indices)
\begin{align}
\label{eq:FirstOrderPseudoDiff}
    (\theta[F]f)(x,\xi) &:= \frac{1}{(2\pi)^d}\int e^{i(\xi-\eta)\cdot y} \delta[F]_j(x,y)\partial_{\eta_j}f(x,\eta)d y d\eta \\
    &= -\frac{1}{(2\pi)^d}\int e^{i(\xi-\eta)\cdot y} \frac{F\left(x+\tfrac{\hbar y}{2}\right)-F\left(x-\tfrac{\hbar y}{2}\right)}{i\hbar}f(x,\eta)d y d\eta. \nonumber
\end{align}
The second line is the definition used in \cite{lions_sur_1993, markowich_classical_1993, moller_pauli-poisson_2025, golse_semi-classical_2026}.

For the second-order centered finite difference we write
 \begin{equation}
 \label{eq:SecondOrderFinDiff}
     \Delta[F](x,y) :=i \frac{F\left(x+\frac{\hbar y}{2}\right)-2F(x)+F\left(x-\frac{\hbar y}{2}\right)}{2\hbar^2|y|^2} .
 \end{equation}
 We denote the associated pseudo-differential operator by
 \begin{align}
 \label{eq:SecondOrderPseudoDiff}
    (\Theta[F]f)(x,\xi) &:= \frac{1}{(2\pi)^d}\int e^{i(\xi-\eta)\cdot y} \Delta[F](x,y)\Delta_{\eta}f(x,\eta)d y d\eta \\
    &= -\frac{1}{(2\pi)^d}\int e^{i(\xi-\eta)\cdot y} \Delta[F](x,y)|y|^2 f(x,\eta)d y d\eta. \nonumber
\end{align}
 Coherent states are denotes by $\ket{q,p}$. The wavefunction of $\ket{q,p}$ is given by
 \begin{equation}
     \psi_{q,p}(x) := \frac{1}{(\pi\hbar)^{d/4}}e^{-{\tfrac{|x-q|^2}{2\hbar}}}e^{\tfrac{i}{\hbar}p\cdot x}.
 \end{equation}
 The Toeplitz operator with symbol $\Phi$ is defined as
 \begin{align}
     \mathrm{Op}^T_{\hbar}[\Phi(t,\cdot,\cdot)] := \iint \frac{dqdp}{(2\pi\hbar)^d} \ket{q,p}\bra{q,p}\Phi(t,q,p).
 \end{align}
We denote the kernel of the operator $\ket{q,p}\bra{q,p}$ by $K_{q,p}$. That is,
\begin{equation}
    K_{q,p}(X,Y) := \psi_{q,p}(X)\psi_{q,p}(Y)^* = \tfrac{1}{(\pi\hbar)^{d/2}}e^{-\tfrac{1}{2{\hbar}}(|X-q|^2+|Y-q|^2)}e^{\tfrac{i}{\hbar}(X-Y)\cdot p}.
\end{equation}

\subsection{Miscellaneous}
The Gaussian is defined as
\begin{equation}
\label{eq:Gaussian}
 G_{\hbar}(z) :=  \tfrac{1}{(\pi\hbar)^{d/2}} e^{-\frac{|z|^2}{\hbar}}.
\end{equation}
The space of tempered distributions is denoted by $\mathcal{S}'$ where $\mathcal{S}$ is the Schwartz space.

We use the following notation for the so-called ``Japanese bracket":
\begin{equation}
    \langle x \rangle := \sqrt{1 +|x|^2}.
\end{equation} 

The letter $C$ denotes a general positive constant that may vary from one line to another. 

\section{Pseudo-distance for the Dirac equation}
\label{sec:DistanceDirac}
From now on let
\begin{align*}
H(t) = \alpha \cdot (-i\hbar \nabla - A(t)) + \beta + V(t), && P(t,q,p) = \alpha \cdot (p-A(t,q)) + \beta +V(t,q)I_4.
\end{align*}
Following \eqref{eq:GrönwallDistance}, we have to control
\begin{align}
&\Tr(R(t)\tfrac{i}{\hbar}[H(t),\mathrm{Op}^T_{\hbar}[\Phi(t)]]) + \Tr(R(t)\mathrm{Op}^T_{\hbar}\left[\tfrac{1}{2}(\{P(t),\Phi(t)\}-\{\Phi(t), P(t)\})+\tfrac{1}{i}[P(t),\mathcal{Z}(t)]\right]).
\end{align}
In this section we rewrite this expression in order to be able to prove the Gronwall estimate. We split the Dirac Hamiltonian into three parts:
\begin{align*}
H(t) 
&= \underbrace{\alpha \cdot(-i\hbar \nabla)+\beta}_{\text{free kinetic part}} \underbrace{- \alpha\cdot A(t)}_{\text{magnetic part}} + \underbrace{V(t)}_{\text{electric part}} \\
&=: H_{\mathrm{kin}} + H_{\mathrm{mag}}(t) + H_{\mathrm{el}}(t).
\end{align*}
Similarly we decompose the Dirac symbol $P$ as follows
\begin{align*}
P(t,q,p) 
&= \underbrace{\alpha \cdot p+\beta}_{\text{free kinetic part}} \underbrace{- \alpha\cdot A(t,q)}_{\text{magnetic part}} + \underbrace{V(t,q)I_4}_{\text{electric part}} \\
&=: P_{\mathrm{kin}}(p) + P_{\mathrm{mag}}(t,q) + P_{\mathrm{el}}(t,q).
\end{align*}
The treatment of the free kinetic part and the electric part are rather straightforward. For the magnetic part one has to take into account the commutator of the Toeplitz quantization with  the multiplication operator $A$ and the matrix $\alpha$ \emph{at the same time}, i.e. if $\Phi(q,p)\ket{q,p}\bra{q,p}$ is the Toeplitz quantization with matrix-valued symbol $\Phi$ one has to treat
  \begin{align*}
\tfrac{i}{\hbar}[H_{\mathrm{mag}},\Phi(q,p)\ket{q,p}\bra{q,p}] = \tfrac{i}{\hbar}[-\alpha \cdot A,\Phi(q,p)\ket{q,p}\bra{q,p}],
\end{align*}
where the commutator is both a commutator of the multiplication operator $A$ with the Toeplitz quantization and a commutator of the matrix $\alpha$ with the symbol matrix $\Phi$.

Finally we have to treat the Lagrange multiplier term
\begin{align*}
    \Tr(R(t)\mathrm{Op}^T_{\hbar}[\mathcal{L}(t)]) = \Tr(R(t)\mathrm{Op}^T_{\hbar}[\tfrac{1}{i}[P(t),\mathcal{Z}(t)]]).
\end{align*}

In the following it is always understood that $\Phi(t) = \Phi(t,q,p)$.

\subsection{Free kinetic part}
We consider first the free kinetic part, i.e. $$H_{\mathrm{kin}} =\alpha \cdot (-i\hbar \nabla) + \beta, \quad P_{\mathrm{kin}}(p) = \alpha \cdot p + \beta.$$ 
\begin{lemma}
\label{lemma:KineticPart}We have
\begin{align*}
&\Tr(R(t)\tfrac{i}{\hbar}[H_{\mathrm{kin}},\mathrm{Op}^T_{\hbar}[\Phi(t)]]) + \Tr(R(t)\mathrm{Op}^T_{\hbar}[\tfrac{1}{2}(\{P_{\mathrm{kin}},\Phi(t)\}-\{\Phi(t), P_{\mathrm{kin}}\})]) \\ & \quad  =\Tr(R(t)\mathrm{Op}^T_{\hbar}\left[\tfrac{i}{\hbar}[P_{\mathrm{kin}},\Phi(t)]\right] ) + \Tr(R(t)\mathrm{Op}^T_{\hbar}\left[\tfrac{1}{2i}\partial_{p_k}[\alpha_k,\Phi(t)]\right]).
\end{align*}
\begin{proof}
First, notice that
\begin{align*}
\tfrac{i}{\hbar}[\alpha \cdot (-i\hbar \nabla)+\beta,\mathrm{Op}^T_{\hbar}[\Phi]] =
[\alpha_k\partial_k,\mathrm{Op}^T_{\hbar}[\Phi]]+ \tfrac{i}{\hbar}[\beta,\mathrm{Op}^T_{\hbar}[\Phi]].
\end{align*}
Since 
\begin{align*}
\mathrm{Op}^T_{\hbar}[\Phi(t,\cdot,\cdot)] = \iint \frac{dqdp}{(2\pi\hbar)^d} \ket{q,p}\bra{q,p}\Phi(t,q,p),
\end{align*}
we study the action of $[\alpha_k\partial_k,\cdot]$ on $\Phi(q,p)\ket{q,p}\bra{q,p}$. We can decompose the commutator as follows:
\begin{align*}[\alpha_k\partial_k,\Phi(q,p)\ket{q,p}\bra{q,p}] = \tfrac{1}{2}[\alpha_k,\Phi(q,p)]_+ [\partial_k,\ket{q,p}\bra{q,p}] + \tfrac{1}{2}[\alpha_k,\Phi(q,p)][\partial_k,\ket{q,p}\bra{q,p}]_+,
\end{align*}
where $[A,B]_+ = AB+BA$ denotes the anticommutator. Denote by $K_{q,p}(X,Y)$ the kernel of $\ket{q,p}\bra{q,p}$. We have
\begin{align*}
[\partial_k,\ket{q,p}\bra{q,p}](X,Y) &= (\partial_{X_k}+\partial_{Y_k})K_{q,p}(X,Y) \\
&=-\left(\frac{X_k-q_k}{\hbar} + \frac{Y_k-q_k}{\hbar}\right) K_{q,p}(X,Y) \\
&=-\partial_{q_k}K_{q,p}(X,Y).
\end{align*}
Moreover,
\begin{align*}
[\partial_k,\ket{q,p}\bra{q,p}]_+(X,Y) &= (\partial_{X_k}-\partial_{Y_k})K_{q,p}(X,Y) \\
&=\tfrac{2i}{\hbar}p_kK_{q,p}(X,Y)-\left(\frac{X_k-q_k}{\hbar} - \frac{Y_k-q_k}{\hbar}\right) K_{q,p}(X,Y) \\
&=\tfrac{2i}{\hbar}p_k K_{q,p}(X,Y)-\left(\frac{X_k-Y_k}{\hbar} \right) K_{q,p}(X,Y) \\
&=\tfrac{2i}{\hbar}p_kK_{q,p}(X,Y)-\tfrac{1}{i}\partial_{p_k} K_{q,p}(X,Y).
\end{align*}
We conclude that
\begin{align*}
\tfrac{i}{\hbar}[\alpha_k(-i\hbar\partial_k) + \beta,\Phi(q,p)\ket{q,p}\bra{q,p}] &= -\tfrac{1}{2}[\alpha_k,\Phi(q,p)]_+ \partial_{q_k} \ket{q,p}\bra{q,p} \\ &\qquad  + \tfrac{1}{2}[\alpha_k,\Phi(q,p)]\left(\tfrac{2i}{\hbar}p_k-\tfrac{1}{i}\partial_{p_k}\right)\ket{q,p}\bra{q,p} \\ &\qquad + \tfrac{i}{\hbar}[\beta,\Phi(q,p)]\ket{q,p}\bra{q,p} \\
&= -\tfrac{1}{2}[\alpha_k,\Phi(q,p)]_+ \partial_{q_k} \ket{q,p}\bra{q,p} \\ &\qquad  + \tfrac{i}{\hbar}[\alpha_k p_k + \beta,\Phi(q,p)]-\tfrac{1}{2i}[\alpha_k,\Phi(q,p)]\partial_{p_k}\ket{q,p}\bra{q,p}.
\end{align*}
Consequently,
\begin{align}
\tfrac{i}{\hbar}[\alpha \cdot (-i\hbar \nabla)+\beta,\mathrm{Op}^T_{\hbar}[\Phi(t)]] &= \mathrm{Op}^T_{\hbar}\left[\tfrac{1}{2}\partial_{q_k}\left(\alpha_k \Phi(t) + \Phi(t)\alpha_k\right)\right] \nonumber\\
&\qquad +  \mathrm{Op}^T_{\hbar}\left[\tfrac{i}{\hbar}[\alpha\cdot p + \beta,\Phi(t)]\right] \nonumber \\
&\qquad + \mathrm{Op}^T_{\hbar}\left[\tfrac{1}{2i}\partial_{p_k}[\alpha_k,\Phi(t)]\right]. \label{eq:free kinetic part quantum}
\end{align}
For the Poisson bracket we have
\begin{align}
\tfrac{1}{2}\left(\{\alpha \cdot p + \beta,\Phi(q,p)\}-\{\Phi(q,p),\alpha \cdot p + \beta\}\right) = -\tfrac{1}{2} \partial_{q_k}(\alpha_k \Phi(q,p) + \Phi(q,p) \alpha_k).
\label{eq:free kinetic part classical}
\end{align}
We note that the Poisson brackets \eqref{eq:free kinetic part classical} cancel with the first term on the right hand side of \eqref{eq:free kinetic part quantum}. Therefore,
\begin{align*}
\tfrac{i}{\hbar}[\alpha \cdot (-i\hbar \nabla)+\beta,\mathrm{Op}^T_{\hbar}[\Phi(t,\cdot,\cdot)]] + \mathrm{Op}^T_{\hbar}\left[\tfrac{1}{2}\left(\{\alpha \cdot p + \beta,\Phi(q,p)\}-\{\Phi(q,p),\alpha \cdot p + \beta\}\right)\right] \\
= \mathrm{Op}^T_{\hbar}\left[\tfrac{i}{\hbar}[\alpha\cdot p + \beta,\Phi(q,p)]\right] + \mathrm{Op}^T_{\hbar}\left[\tfrac{1}{2i}\partial_{p_k}[\alpha_k,\Phi(q,p)]\right].
\end{align*}
This proves the claim.
\end{proof}
\end{lemma}

\subsection{Magnetic part}\label{sec:MagneticPart}
Now we deal with the magnetic part, i.e. $$H_{\mathrm{mag}}(t) =-\alpha\cdot A(t), \quad P_{\mathrm{mag}}(t,q) =-\alpha\cdot A(t,q).$$

\begin{lemma}
\label{lemma:MagneticPart}
We have
\begin{align*}
&\Tr(R(t)\tfrac{i}{\hbar}\left[H_{\mathrm{mag}}(t),\mathrm{Op}^T_{\hbar}[\Phi(t)]\right]) -\Tr(R(t)\mathrm{Op}^T_{\hbar}\left[\tfrac{1}{2}(\{P_{\mathrm{mag}}(t),\Phi(t)\}-\{\Phi(t), P_{\mathrm{mag}}(t)\})\right]) \\ &\qquad=-\Tr(R(t)\delta[A_k]_j\mathrm{Op}^T_{\hbar}\left[\tfrac{1}{2}[-\alpha_k,\partial_{p_j}\Phi]_+\right]-\mathrm{Op}^T_{\hbar}\left[ \partial_{q_j}A_k(q)\tfrac{1}{2}[-\alpha_k,\partial_{p_j}\Phi]_+\right]) \nonumber\\
&\qquad \qquad \qquad + \hbar\Tr({R}(t)\Delta[A_k] \mathrm{Op}^T_{\hbar}\left[[\alpha_k,\Delta_p\Phi]\right])
 \nonumber\\ &\qquad \qquad \qquad +  \Tr({R}(t) \mathrm{Op}^T_{\hbar}\left[\tfrac{1}{2i} \partial_{q_j} \left(\partial_{q_j}A_k\left[-\alpha_k,\Phi\right]\right)\right])-\Tr({R}(t) \mathrm{Op}^T_{\hbar}\left[\tfrac{1}{2i} R^{jj}_k(x,q)[-\alpha_k,\Phi]\right]) \\
 &\qquad \qquad \qquad +  \tfrac{i\hbar}{4} \Tr({R}(t) \mathrm{Op}^T_{\hbar}\left[ \partial_{q_i}\partial_{q_j}\left(R^{ij}_k(x,q)\left[-\alpha_k,\Phi\right]\right)\right]) \\
&\qquad \qquad \qquad  + \Tr({R}(t) \mathrm{Op}^T_{\hbar}\left[ \tfrac{i}{\hbar} [P_{\mathrm{mag}},\Phi]\right]).\nonumber
\end{align*}
Here, $R^{ij}_k(x,q)$ denotes the remainder of the Taylor expansion of $A_k$, i.e.
\begin{align*}
    R^{ij}_k(x,q) = \int_0^1 (1-\theta)(\partial_{i}\partial_{j}A_k)(q+\theta(x-q))d\theta,
\end{align*}
where Einstein summation convention is understood.
\begin{remark}
    Since this expression seems quite complicated, an explanation of the terms on the right hand side seems appropriate. The commutator of the magnetic part $H_{\mathrm{mag}}$ with the Toeplitz operator with symbol $\Phi$ can be decomposed as follows:
    \begin{align}
    \label{eq:MagneticPart}
    \begin{split}
\tfrac{i}{\hbar}[H_{\mathrm{mag}},\Phi(q,p)\ket{q,p}\bra{q,p}] &= \tfrac{1}{2}\tfrac{i}{\hbar}[-\alpha_k,\Phi(q,p)]_+ [A_k,\ket{q,p}\bra{q,p}] \nonumber\\ &\qquad + \tfrac{1}{2}\tfrac{i}{\hbar}[-\alpha_k,\Phi(q,p)][A_k,\ket{q,p}\bra{q,p}]_+. 
\end{split}
\end{align}
    
    The first term on the right hand side of \eqref{eq:MagneticPart} corresponds to the first term in the decomposition above (called $(C1)$ in the proof below). 
    This term will vanish in the limit with a rate of convergence in ${\hbar}$.

    The remaining terms correspond the second term in the decomposition (called $(C2)$ in the proof below). 
    The anticommutator  $[A_k,\ket{q,p}\bra{q,p}]_+$ is then split into two terms:
    
    The first is a second order finite difference, which corresponds to the second term on the right hand side of \eqref{eq:MagneticPart} (the one involving $\Delta[A_k]$). This term scales with $\hbar$.
    
    The second term is a multiplication with $A_k(x)$. This has to be yet again split into the sum of $A_k(x)-A_k(q)$ and $A_k(q)$, corresponding to the last tree terms on the right hand side of \eqref{eq:MagneticPart}. The first two (i.e. the second and third to last lines) correspond to a Taylor expansion of the difference $A_k(x)-A_k(q)$. One might try to estimate this difference by $\mathrm{Lip}(A)|x-q|$, however this yields an $O(1)$ expression which cannot be written as a commutator with $P$ and hence cannot enter the Lagrange multiplier. We thus have to resort to using Taylors theorem to express the $O(1)$ bit as a commutator with $P$ (using Lemma \ref{lemma:HusimiExtraTerm}) and estimate the remainder $R^{ij}$ of the Taylor expansion by some power of $\hbar$.

    The last line corresponds to the multiplication with $A_k(q)$ and thus enters the Lagrange multiplier directly.
\end{remark}
\begin{proof}
In the following we denote by $K_{q,p}[\Phi]$ the kernel of $\Phi(q,p)\ket{q,p}\bra{q,p}$ and by $\tilde{K}_{q,p}(x,y) = K_{q,p}(X,Y)$ we denote the kernel in Weyl variables.

Recall $H_{\mathrm{mag}} := -\alpha \cdot A = -\alpha_kA_k$. Similarly to the free kinetic part, we decompose the commutator as follows:
\begin{align}
\tfrac{i}{\hbar}[H_{\mathrm{mag}},\Phi(q,p)\ket{q,p}\bra{q,p}] &= \tfrac{1}{2}\tfrac{i}{\hbar}[-\alpha_k,\Phi(q,p)]_+ [A_k,\ket{q,p}\bra{q,p}] \nonumber\\ &\qquad + \tfrac{1}{2}\tfrac{i}{\hbar}[-\alpha_k,\Phi(q,p)][A_k,\ket{q,p}\bra{q,p}]_+. 
\end{align}
Therefore
\begin{align}
    &\Tr(R(t)\tfrac{i}{\hbar}[H_{\mathrm{mag}},\mathrm{Op}^T_{\hbar}[\Phi(t)]]) \nonumber \\ &\qquad = \Tr(R(t) \iint \tfrac{1}{2}\tfrac{i}{\hbar}[-\alpha_k,\Phi(q,p)]_+ [A_k,\ket{q,p}\bra{q,p}]) \tfrac{dqdp}{(2\pi\hbar)^d}\nonumber \\
    &\qquad \qquad + \Tr(R(t) \iint  \tfrac{1}{2}\tfrac{i}{\hbar}[-\alpha_k,\Phi(q,p)][A_k,\ket{q,p}\bra{q,p}]_+)\tfrac{dqdp}{(2\pi\hbar)^d} \nonumber \\ &\qquad =: (C1) + (C2). \label{eq:split commutator magnetic}
\end{align}

\underline{The term $(C1)$:} The commutator of the multiplication operator $A_k$ with the Toeplitz kernel $\ket{q,p}\bra{q,p}$ is given by
\begin{align*}
 \tfrac{i}{\hbar}[A_k,\ket{q,p}\bra{q,p}](X,Y) = \tfrac{i}{\hbar}(A_k(X)-A_k(Y))K_{q,p}(X,Y).
\end{align*}
Then we write $(C1)$ in Weyl variables as follows.
\begin{align*}
&\Tr(R(t) \iint \tfrac{1}{2}\tfrac{i}{\hbar}[-\alpha_k,\Phi(q,p)]_+ [A_k,\ket{q,p}\bra{q,p}]\tfrac{dqdp}{(2\pi\hbar)^d})  \\
& \qquad \qquad = \int  {R}(t,X,Y) \tfrac{i}{\hbar}(A_k(X)-A_k(Y))K_{q,p}(X,Y)\tfrac{1}{2}[-\alpha_k,\Phi(q,p)]_+  dX dY \tfrac{dqdp}{(2\pi\hbar)^d}\\ & \qquad \qquad =\int   \tilde{R}(t,x,y) \tfrac{i}{\hbar} \left[{A_k\left(x+\tfrac{\hbar y}{2}\right)-A_k\left(x-\tfrac{\hbar y}{2}\right)} \right]\tfrac{1}{2}[-\alpha_k,\tilde{K}_{q,p}[\Phi](x,y)]_+ \hbar^d dx dy \tfrac{dqdp}{(2\pi\hbar)^d} \\
& \qquad \qquad = \int   \tilde{R}(t,x,y) \frac{A_k\left(x+\tfrac{\hbar y}{2}\right)-A_k\left(x-\tfrac{\hbar y}{2}\right)}{\hbar|y|} \frac{y_j}{|y|}(iy_j)\tfrac{1}{2}[-\alpha_k,\tilde{K}_{q,p}[\Phi](x,y)]_+ \hbar^d dx dy \tfrac{dqdp}{(2\pi\hbar)^d} \\
& \qquad \qquad = -\int \tilde{R}(t,x,y) \frac{A_k\left(x+\tfrac{\hbar y}{2}\right)-A_k\left(x-\tfrac{\hbar y}{2}\right)}{\hbar|y|} \frac{y_j}{|y|} \tilde{K}_{q,p}[\tfrac{1}{2}[-\alpha_k,\partial_{p_j}\Phi]_+](x,y) \hbar^d dx dy \tfrac{dqdp}{(2\pi\hbar)^d}
\\
& \qquad \qquad = -\int \tilde{R}(t,x,y) \delta[A_k]_j(x,y)  \tilde{K}_{q,p}[\tfrac{1}{2}[-\alpha_k,\partial_{p_j}\Phi]_+](x,y) \hbar^d dx dy \tfrac{dqdp}{(2\pi\hbar)^d}\\
& \qquad \qquad =-\Tr(R(t)\delta[A_k]_j\mathrm{Op}^T_{\hbar}\left[\tfrac{1}{2}[-\alpha_k,\partial_{p_j}\Phi]_+\right]).
\end{align*}
Here we used $iy_j \tilde{K}_{q,p} = \partial_{p_j} \tilde{K}_{q,p}$ and integration by parts, together with the notation \eqref{eq:FirstOrderFinDiff}, i.e. $\delta[\,\cdot\,]_j$, for the first order finite difference. \\

\underline{The term $(C2)$:} The anticommutator of the mulitplication operator $A_k$ with the Toeplitz kernel $\ket{q,p}\bra{q,p}$ is given by 
\begin{align*}
 \tfrac{1}{2}\tfrac{i}{\hbar}[A_k,\ket{q,p}\bra{q,p}]_+(X,Y) &=  \tfrac{1}{2}\tfrac{i}{\hbar}\left(A_k(X)+A_k(Y)\right) K_{q,p}(X,Y)
 \\  &=\tfrac{1}{2}\tfrac{i}{\hbar}\left(A_k(X)-2A_k\left(\frac{X+Y}{2}\right)+A_k(Y)\right)K_{q,p}(X,Y) \\ &\qquad + \tfrac{i}{\hbar}A_k\left(\frac{X+Y}{2}\right) K_{q,p}(X,Y).
\end{align*}
That is, we write $[A_k,\ket{q,p}\bra{q,p}]_+$ as a second order difference plus an oscillating term. The oscillating term will enter the Lagrange multiplier while the second order difference will converge to zero.

As with $(C1)$, we write $(C2)$ in Weyl variables as follows
\begin{align*}
&\Tr(R(t) \iint  \tfrac{1}{2}\tfrac{i}{\hbar}[-\alpha_k,\Phi(q,p)][A_k,\ket{q,p}\bra{q,p}]_+\tfrac{dqdp}{(2\pi\hbar)^d}) = \\
&\qquad \qquad = \int_{\mathbb{R}^{4d}}   {R}(t,X,Y) \tfrac{1}{2}\tfrac{i}{\hbar}{\left(A_k\left(X\right)-2A_k\left(\tfrac{X+Y}{2}\right)+A_k\left(Y\right)\right)}[-\alpha_k,{K}_{q,p}[\Phi](X,Y)]dXdY\tfrac{dqdp}{(2\pi\hbar)^d}
 \\ &\qquad \qquad \qquad +  \int_{\mathbb{R}^{4d}} {R}(t,X,Y) \tfrac{i}{\hbar}A_k\left(\tfrac{X+Y}{2}\right)[-\alpha_k,{K}_{q,p}[\Phi](X,Y)] dX dY \tfrac{dqdp}{(2\pi\hbar)^d}\\
&\qquad \qquad =\hbar\int_{\mathbb{R}^{4d}}   \tilde{R}(t,x,y) i\frac{A_k\left(x+\frac{\hbar y}{2}\right)-2A_k(x)+A_k\left(x-\frac{\hbar y}{2}\right)}{2\hbar^2|y|^2} |y|^2[-\alpha_k,\tilde{K}_{q,p}[\Phi](x,y)] \hbar^d dx dy \tfrac{dqdp}{(2\pi\hbar)^d}
 \\ &\qquad \qquad \qquad +  \int_{\mathbb{R}^{4d}}   \tilde{R}(t,x,y) \tfrac{i}{\hbar}A_k\left(x\right)[-\alpha_k,\tilde{K}_{q,p}[\Phi](x,y)]\hbar^d dx dy \tfrac{dqdp}{(2\pi\hbar)^d}
  \\&\qquad \qquad= -\hbar\int_{\mathbb{R}^{4d}}  \tilde{R}(t,x,y)\Delta[A_k](x,y) [-\alpha_k,\tilde{K}_{q,p}[\Delta_p\Phi](x,y)] dx dy \tfrac{dq dp}{(2\pi)^d}
 \\ &\qquad \qquad \qquad + \int_{\mathbb{R}^{4d}}   \tilde{R}(t,x,y)\tfrac{i}{\hbar}[-\alpha_kA_k(x),\tilde{K}_{q,p}[{\Phi}](x,y)] dx dy \tfrac{dq dp}{(2\pi)^d},
 \end{align*}
where we used $|y|^2 \tilde{K}_{q,p} =-\Delta_p \tilde{K}_{q,p}$ and the notation \eqref{eq:SecondOrderFinDiff}, i.e. $\Delta[\,\cdot\,]$, for the second order finite difference.


Now we treat the second term on the right hand side. Note that
\begin{align*}
\tfrac{i}{\hbar}A_k(x)\tilde{K}_{q,p}[\Phi](x,y) &= \tfrac{i}{\hbar}\iint \left({A_k(x)-A_k(q)}\right) G_{\hbar}(x-q)e^{ip\cdot y} e^{-\frac{\hbar}{4}|y|^2} \Phi(q,p)\tfrac{dp dq}{(2\pi \hbar)^d} \\ &\qquad + \tfrac{i}{\hbar}\iint A_k(q) G_{\hbar}(x-q)e^{ip\cdot y} e^{-\frac{\hbar}{4}|y|^2} \Phi(q,p)\tfrac{dp dq}{(2\pi \hbar)^d}.
\end{align*}
By Taylor's theorem,
\begin{align*}
    A_k(x)-A_k(q) = \partial_{q_j}A_k(q)(x_j-q_j) + R^{ij}_k(x,q) (x_i-q_i)(x_j-q_j),
\end{align*}
where the Einstein summation convention is understood and where
\begin{align}
\label{eq:TaylorRemainder}
    R^{ij}_k(x,q) = \int_0^1 (1-\theta)\partial_i\partial_j A_k(q+\theta(x-q))d\theta.
\end{align}
Then 
\begin{align*}
    &\tfrac{i}{\hbar}\iint \left({A_k(x)-A_k(q)}\right) G_{\hbar}(x-q)e^{ip\cdot y} e^{-\frac{\hbar}{4}|y|^2} \Phi(q,p)\tfrac{dp dq}{(2\pi \hbar)^d}
\\
&\qquad =\tfrac{i}{\hbar}\iint  \partial_{q_j}A_k(q)(x_j-q_j)  G_{\hbar}(x-q)e^{ip\cdot y} e^{-\frac{\hbar}{4}|y|^2} \Phi(q,p)\tfrac{dp dq}{(2\pi \hbar)^d} \\
&\qquad \qquad + \tfrac{i}{\hbar}\iint R^{ij}_k(x,q) (x_i-q_i)(x_j-q_j) G_{\hbar}(x-q)e^{ip\cdot y} e^{-\frac{\hbar}{4}|y|^2} \Phi(q,p)\tfrac{dp dq}{(2\pi \hbar)^d}.
\end{align*}
Now we write
\begin{align*}
   R_k^{ij}(x,q) (x_i-q_i)(x_j-q_j) G_{\hbar}(x-q) = R_k^{ij}(x,q) \left[ \tfrac{\hbar^2}{4}\partial_{q_i}\partial_{q_j}G_{\hbar}(x-q) + \tfrac{\hbar}{2}\delta_{ij}G_{\hbar}(x-q) \right]
\end{align*}
and
\begin{equation*}
    (x_j-q_j)G_{\hbar}(x-q) = \tfrac{\hbar}{2}\partial_{q_j}G_{\hbar}(x-q).
\end{equation*}
A partial integration in $q$ yields
\begin{align*}
&\tfrac{i}{\hbar}\iint \left({A_k(x)-A_k(q)}\right) G_{\hbar}(x-q)e^{ip\cdot y} e^{-\frac{\hbar}{4}|y|^2} \Phi(q,p)\tfrac{dp dq}{(2\pi \hbar)^d}
\\
    &\qquad = \tfrac{1}{2i}\iint  G_{\hbar}(x-q)e^{ip\cdot y} e^{-\frac{\hbar}{4}|y|^2} \partial_{q_j}\left(\partial_{q_j}A_k(q)\Phi(q,p)\right)\tfrac{dp dq}{(2\pi \hbar)^d} \\
&\qquad \qquad + \tfrac{i\hbar}{4}\iint  G_{\hbar}(x-q)e^{ip\cdot y} e^{-\frac{\hbar}{4}|y|^2} \partial_{q_i}\partial_{q_j}\left(R^{ij}_k(x,q)\Phi(q,p)\right)\tfrac{dp dq}{(2\pi \hbar)^d}
\\
&\qquad \qquad + \tfrac{i}{2}\iint  G_{\hbar}(x-q)e^{ip\cdot y} e^{-\frac{\hbar}{4}|y|^2} \left(R^{jj}_k(x,q)\Phi(q,p)\right)\tfrac{dp dq}{(2\pi \hbar)^d}
\\
    &\qquad = \tfrac{1}{2i}\iint  G_{\hbar}(x-q)e^{ip\cdot y} e^{-\frac{\hbar}{4}|y|^2} \left[\partial_{q_j}\left(\partial_{q_j}A_k(q)\Phi(q,p)\right)-R^{jj}_k(x,q)\Phi(q,p)\right]\tfrac{dp dq}{(2\pi \hbar)^d} \\
&\qquad \qquad + \tfrac{i\hbar}{4}\iint  G_{\hbar}(x-q)e^{ip\cdot y} e^{-\frac{\hbar}{4}|y|^2}  \partial_{q_i}\partial_{q_j}\left(R^{ij}_k(x,q)\Phi(q,p)\right)\tfrac{dp dq}{(2\pi \hbar)^d}.
\end{align*}
We conclude that
\begin{align*}
\tfrac{i}{\hbar}A_k(x)\tilde{K}_{q,p}[\Phi](x,y) &= \tfrac{1}{2i}\tilde{K}_{q,p}\left[\partial_{q_j}(\partial_{q_j}A_k\Phi)-R^{jj}_k(x,q)\Phi\right](x,y) + \tfrac{i\hbar}{4} \tilde{K}_{q,p}\left[\partial_{q_i}\partial_{q_j}\left(R^{ij}_k(x,q)\Phi\right)\right](x,y) \\
&\qquad + \tfrac{i}{\hbar}\tilde{K}_{q,p}\left[A_k\Phi\right](x,y)
\end{align*}
and therefore
\begin{align*}
\tfrac{i}{\hbar}[-\alpha_k A_k(x),\tilde{K}_{q,p}[\Phi](x,y)] &= \tfrac{1}{2i}\tilde{K}_{q,p}\left[\partial_{q_j}(\partial_{q_j}A_k[-\alpha_k,\Phi])-R^{jj}_k(x,q)[-\alpha_k,\Phi]\right](x,y) \\ &\qquad + \tfrac{i\hbar}{4} \tilde{K}_{q,p}\left[\partial_{q_j}\partial_{q_i}\left(R^{ij}_k(x,q)[-\alpha_k,\Phi]\right)\right](x,y) \\
&\qquad + \tfrac{i}{\hbar}\tilde{K}_{q,p}\left[[-\alpha_kA_k,\Phi]\right](x,y) \\
&= \tfrac{1}{2i}\tilde{K}_{q,p}\left[\partial_{q_j}(\partial_{q_j}A_k[-\alpha_k,\Phi])-R^{jj}_k(x,q)[-\alpha_k,\Phi]\right](x,y) \\ &\qquad + \tfrac{i\hbar}{4} \tilde{K}_{q,p}\left[\partial_{q_i}\partial_{q_j}\left(R^{ij}_k(x,q)[-\alpha_k,\Phi]\right)\right](x,y) \\
&\qquad + \tilde{K}_{q,p}\left[\tfrac{i}{\hbar}[P_{\mathrm{mag}},\Phi]\right](x,y).
\end{align*}
Thus,
\begin{align*}
&\Tr(R(t) \iint  \tfrac{1}{2}\tfrac{i}{\hbar}[-\alpha_k,\Phi(q,p)][A_k,\ket{q,p}\bra{q,p}]_+) \tfrac{dqdp}{(2\pi\hbar)^d} = \\
&\qquad \qquad=-\hbar\int   \tilde{R}(t,x,y)\Delta[A_k](x,y) \tilde{K}_{q,p}[[-\alpha_k,\Delta_p\Phi]](x,y) \hbar^d dx dy \tfrac{dqdp}{(2\pi\hbar)^d}
 \\ &\qquad \qquad \qquad + \tfrac{1}{2i} \int  \tilde{R}(t,x,y)  \tilde{K}_{q,p}\left[\partial_{q_j}\left(\partial_{q_j}A_k[-\alpha_k,\Phi(q,p)]\right)-R^{jj}_k(x,q)[-\alpha_k,\Phi(q,p)]\right](x,y) \hbar^d dx dy \tfrac{dqdp}{(2\pi\hbar)^d} \\
 &\qquad \qquad \qquad + \tfrac{i\hbar}{4} \int  \tilde{R}(t,x,y)  \tilde{K}_{q,p}\left[\partial_{q_i}\partial_{q_j}\left(R^{ij}_k(x,q)[-\alpha_k,\Phi(q,p)]\right)\right](x,y) \hbar^d dx dy \tfrac{dqdp}{(2\pi\hbar)^d}\\
&\qquad \qquad \qquad  + \int  \tilde{R}(t,x,y) \tilde{K}_{q,p}[ \tfrac{i}{\hbar} [P_{\mathrm{mag}},\Phi]](x,y) \hbar^d dx dy \tfrac{dqdp}{(2\pi\hbar)^d}\\
&\qquad \qquad=\hbar\Tr({R}(t)\Delta[A_k] \mathrm{Op}^T_{\hbar}\left[[\alpha_k,\Delta_p\Phi]\right])
 \\ &\qquad \qquad \qquad +  \Tr({R}(t) \mathrm{Op}^T_{\hbar}\left[\tfrac{1}{2i} \partial_{q_j} \left(\partial_{q_j}A_k\left[-\alpha_k,\Phi\right]\right)\right]) - \Tr({R}(t) \mathrm{Op}^T_{\hbar}\left[\tfrac{1}{2i} R^{jj}_k(x,q)[-\alpha_k,\Phi]\right]) \\
 &\qquad \qquad \qquad +  \tfrac{i\hbar}{4} \Tr({R}(t) \mathrm{Op}^T_{\hbar}\left[\partial_{q_j}\partial_{q_i}\left(R^{ij}_k(x,q)\left[-\alpha_k,\Phi\right]\right)\right]) \\
&\qquad \qquad \qquad  + \Tr({R}(t) \mathrm{Op}^T_{\hbar}\left[ \tfrac{i}{\hbar} [P_{\mathrm{mag}},\Phi]\right]). \end{align*}
Collecting the calculations we have
\begin{align}
\label{eq:formula magnetic part quantum} &\Tr(R(t)\tfrac{i}{\hbar}[H_{\mathrm{mag}},\mathrm{Op}^T_{\hbar}[\Phi(t)]]) \\
&\qquad \qquad = -\Tr(R(t)\delta[A_k]_j\mathrm{Op}^T_{\hbar}\left[\tfrac{1}{2}[-\alpha_k,\partial_{p_j}\Phi]_+\right]) \nonumber\\
&\qquad \qquad \qquad + \hbar\Tr({R}(t)\Delta[A_k] \mathrm{Op}^T_{\hbar}\left[[\alpha_k,\Delta_p\Phi]\right])
 \nonumber\\ &\qquad \qquad \qquad +  \Tr({R}(t) \mathrm{Op}^T_{\hbar}\left[\tfrac{1}{2i} \partial_{q_j} \left(\partial_{q_j}A_k\left[-\alpha_k,\Phi\right]\right)\right]) -\Tr({R}(t) \mathrm{Op}^T_{\hbar}\left[\tfrac{1}{2i} R^{jj}_k(x,q)[-\alpha_k,\Phi]\right])\nonumber \\
 &\qquad \qquad \qquad +  \tfrac{i\hbar}{4} \Tr({R}(t) \mathrm{Op}^T_{\hbar}\left[ \partial_{q_j}\partial_{q_i} \left(R^{ij}_k(x,q)\left[-\alpha_k,\Phi\right]\right)\right]) \nonumber \\
&\qquad \qquad \qquad  + \Tr({R}(t) \mathrm{Op}^T_{\hbar}\left[ \tfrac{i}{\hbar} [P_{\mathrm{mag}},\Phi]\right]).\nonumber
\end{align}

Now the Poisson brackets are given by
\begin{align*}
\mathrm{Op}^T_{\hbar}\left[\tfrac{1}{2}(\{P_{\mathrm{mag}},\Phi(t)\}-\{\Phi(t),P_{\mathrm{mag}}\})\right] &= -\mathrm{Op}^T_{\hbar}\left[\tfrac{1}{2} \partial_{q_j}A_k(q) \partial_{p_j}(\alpha_k \Phi+ \Phi\alpha_k)\right] \\ &= \mathrm{Op}^T_{\hbar}\left[\tfrac{1}{2} \partial_{q_j}A_k(q) [-\alpha_k, \partial_{p_j}\Phi]_+\right] 
\end{align*}
Thus, together with \eqref{eq:formula magnetic part quantum},
\begin{align*}
&\Tr(R(t)\tfrac{i}{\hbar}\left[H_{\mathrm{mag}},\mathrm{Op}^T_{\hbar}[\Phi(t)]\right]) - \Tr(R(t)\mathrm{Op}^T_{\hbar}\left[\tfrac{1}{2}(\{P_{\mathrm{mag}},\Phi(t)\}-\{\Phi(t), P_{\mathrm{mag}}\})\right]) \\
& \qquad  = -\Tr(R(t)\delta[A_k]_j\mathrm{Op}^T_{\hbar}\left[\tfrac{1}{2}[-\alpha_k,\partial_{p_j}\Phi]_+\right]-\mathrm{Op}^T_{\hbar}\left[ \partial_{q_j}A_k(q)\tfrac{1}{2}[-\alpha_k,\partial_{p_j}\Phi]_+\right]) \nonumber\\
&\qquad \qquad \qquad + \hbar\Tr({R}(t)\Delta[A_k] \mathrm{Op}^T_{\hbar}\left[[\alpha_k,\Delta_p\Phi]\right])
 \nonumber\\ &\qquad \qquad \qquad +  \Tr({R}(t) \mathrm{Op}^T_{\hbar}\left[\tfrac{1}{2i} \partial_{q_j} \left(\partial_{q_j}A_k\left[-\alpha_k,\Phi\right]\right)\right])-\Tr({R}(t) \mathrm{Op}^T_{\hbar}\left[\tfrac{1}{2i} R^{jj}_k(x,q)[-\alpha_k,\Phi]\right]) \\
 &\qquad \qquad \qquad + \tfrac{i\hbar}{4}  \Tr({R}(t) \mathrm{Op}^T_{\hbar}\left[ \partial_{q_i} \partial_{q_j} \left(R^{ij}_k(x,q)\left[-\alpha_k,\Phi\right]\right)\right]) \\
&\qquad \qquad \qquad  + \Tr({R}(t) \mathrm{Op}^T_{\hbar}\left[ \tfrac{i}{\hbar} [P_{\mathrm{mag}},\Phi]\right]).\nonumber
\end{align*}
This completes the proof.
\end{proof}
\end{lemma}

\subsection{Electric potential part}
Next, we treat the electric part
\begin{equation*}
    H_{\mathrm{el}}(t) = V(t), \quad P_{\mathrm{el}}(t,q) = V(t,q)I_4.
\end{equation*}
Since the electric part is proportional to the identity $I_4$, all matrix commutators vanish and thus only the commutator of the multiplication operator $V$ with the Toeplitz quantization remains. This produces the first order finite difference.
\begin{lemma}\label{lemma:ElectricPart} We have
    \begin{align*} 
&\Tr(R(t)\tfrac{i}{\hbar}[H_{\mathrm{el}}(t),\mathrm{Op}^T_{\hbar}[\Phi(t)]]) + \Tr(R(t)\mathrm{Op}^T_{\hbar}[\tfrac{1}{2}(\{P_{\mathrm{el}}(t),\Phi(t)\}-\{\Phi(t), P_{\mathrm{el}}(t)\})]) \\
&\quad = -\Tr(R(t)\left(\delta[V]_j\mathrm{Op}^T_{\hbar}\left[\partial_{p_j}\Phi\right]-\mathrm{Op}^T_{\hbar}\left[\partial_{q_j}V(q)\partial_{p_j}\Phi\right]\right)).
\end{align*}
\begin{proof}
    The electric part is proportional to the identity and thus,
\begin{align*}
\tfrac{i}{\hbar}[VI_4,\Phi(q,p)\ket{q,p}\bra{q,p}] &= \tfrac{1}{2}\tfrac{i}{\hbar}[I_4,\Phi(q,p)]_+ [V,\ket{q,p}\bra{q,p}] \nonumber\\ &\qquad + \tfrac{1}{2}\tfrac{i}{\hbar}[I_4,\Phi(q,p)][V,\ket{q,p}\bra{q,p}]_+ \\ 
&= \tfrac{i}{\hbar}\Phi(q,p) [V,\ket{q,p}\bra{q,p}]. 
\end{align*}
Thus,
\begin{align*}
&\Tr(R(t)\tfrac{i}{\hbar}[H_{\mathrm{el}},\mathrm{Op}^T_{\hbar}[\Phi(t)]])  \\
&\qquad = \Tr(R(t) \iint \tfrac{i}{\hbar}[VI_4,\Phi(q,p)\ket{q,p}\bra{q,p}]\tfrac{dqdp}{(2\pi\hbar)^d})  \\
&\qquad = \Tr(R(t) \iint \tfrac{i}{\hbar}\Phi(q,p) [V,\ket{q,p}\bra{q,p}]\tfrac{dqdp}{(2\pi\hbar)^d})  \\
& \qquad \qquad = \int  {R}(t,X,Y) \tfrac{i}{\hbar}(V(X)-V(Y))K_{q,p}(X,Y)\Phi(q,p)dX dY \tfrac{dqdp}{(2\pi\hbar)^d} \\ & \qquad \qquad =\int   \tilde{R}(t,x,y) \tfrac{i}{\hbar} \left[{V\left(x+\tfrac{\hbar y}{2}\right)-V\left(x-\tfrac{\hbar y}{2}\right)} \right]\tilde{K}_{q,p}[\Phi](x,y)\hbar^d dx dy \tfrac{dqdp}{(2\pi\hbar)^d} \\
& \qquad \qquad = \int  \tilde{R}(t,x,y) \frac{V\left(x+\tfrac{\hbar y}{2}\right)-V\left(x-\tfrac{\hbar y}{2}\right)}{\hbar|y|} \frac{y_j}{|y|}(iy_j) \tilde{K}_{q,p}[\Phi](x,y)\hbar^d dx dy \tfrac{dqdp}{(2\pi\hbar)^d} \\
& \qquad \qquad = -\int  \tilde{R}(t,x,y) \frac{V\left(x+\tfrac{\hbar y}{2}\right)-V\left(x-\tfrac{\hbar y}{2}\right)}{\hbar|y|} \frac{y_j}{|y|} \tilde{K}_{q,p}[\partial_{p_j}\Phi](x,y) \hbar^d dx dy \tfrac{dqdp}{(2\pi\hbar)^d}
\\
& \qquad \qquad = -\int \tilde{R}(t,x,y) \delta[V]_j(x,y)  \tilde{K}_{q,p}[\partial_{p_j}\Phi](x,y) \hbar^d dx dy \tfrac{dqdp}{(2\pi\hbar)^d}\\
& \qquad \qquad =-\Tr(R(t)\delta[V]_j\mathrm{Op}^T_{\hbar}\left[\partial_{p_j}\Phi]\right]).
\end{align*}
Here we used again notation \eqref{eq:FirstOrderFinDiff} for the first order finite difference.
\end{proof}
\end{lemma}

\subsection{Husimi Extra Term}
The terms
\begin{align*}
     \Tr(R(t)\tfrac{1}{2i}\mathrm{Op}^T_{\hbar}\left[\left[\alpha_k,\partial_{p_k}\Phi(t)\right]\right]) \quad \text{and} \quad \Tr(R(t)\tfrac{1}{2i}\mathrm{Op}^T_{\hbar}\left[\partial_{q_j}\left(\partial_{q_j}A_k\left[-\alpha_k,\Phi(t)\right]\right)\right])
\end{align*}
as well as
\begin{align*}
    -\Tr({R}(t) \mathrm{Op}^T_{\hbar}\left[\tfrac{1}{2i} R^{jj}_k(x,q)[-\alpha_k,\Phi]\right]),
\end{align*}
where $R_k$ is the remainder of the Taylor expansion \eqref{eq:TaylorRemainder},
from Lemmas \ref{lemma:KineticPart} and \ref{lemma:MagneticPart} are $O(1)$ and do not disappear in the semiclassical limit. However they can be written – at least in the case of the Dirac equation – as a commutator with $P$. Therefore they enter the Lagrange multiplier. This is the content of the following lemma.
\begin{lemma}
\label{lemma:HusimiExtraTerm}
Let $P = \alpha \cdot (p-A) +\beta +V$ be the Dirac symbol. We have
        \begin{align}
        &\Tr(R(t)\tfrac{1}{2i}\mathrm{Op}^T_{\hbar}\left[\left[\alpha_k,\partial_{p_k}\Phi(t)\right]+\partial_{q_j}\left(\partial_{q_j}A_k\left[-\alpha_k,\Phi(t)\right]\right)-R^{jj}_k(x,q)[-\alpha_k,\Phi(t)]\right]) \nonumber \\
        &\qquad  = \Tr(R(t)\mathrm{Op}^T_{\hbar}\left[\tfrac{1}{i}[P(t),\mathcal{V}(t)]\right]) + O(\hbar)\left(\|\nabla^4 A(t)\|_{L^{\infty}}\|\Phi(t)\|_{L^2} + \|\nabla^3 A(t)\|_{L^{\infty}}\|\nabla \Phi(t)\|_{L^2}\right).
        \label{eq:HusimiExtraFormula}
    \end{align}
    where
    \begin{equation}
    \label{eq:HusimiExtraTermFinal}
        \mathcal{V} := - \tfrac{1}{4}\Delta_p \Phi + \tfrac14\partial_{q_j}A_k\partial_{q_j}A_m\partial_{p_m}\partial_{p_k} \Phi +\tfrac12\partial_{q_j}A_k\partial_{p_k}\partial_{q_j} \Phi + \tfrac{1}{4} \Delta A_k \partial_{p_k} \Phi
    \end{equation}
    is the \emph{Husimi extra term}.
\end{lemma}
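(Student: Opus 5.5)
The plan is to prove \eqref{eq:HusimiExtraFormula} in two stages. The first stage is purely algebraic: it uses only the constraint $[P,\Phi]=0$ and the fact that the Dirac symbol is affine in $p$. The second stage isolates the only non-algebraic piece, namely the $x$-dependence of the Taylor remainder $R^{jj}_k(x,q)$, which produces the $O(\hbar)$ error.

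\textbf{Step 1 (differentiating the constraint).} Since $\partial_{p_k}P=\alpha_k$, $\partial_{p_k}\partial_{p_m}P=0$ and $\partial_{q_j}P=-\alpha_m\partial_{q_j}A_m+\partial_{q_j}V\,I_4$, differentiating $[P,\Phi]=0$ gives the following identities.
\begin{align*}
&[\alpha_k,\Phi]+[P,\partial_{p_k}\Phi]=0, \\
&[\alpha_m,\partial_{p_k}\Phi]+[\alpha_k,\partial_{p_m}\Phi]+[P,\partial_{p_m}\partial_{p_k}\Phi]=0, \\
&[\alpha_k,\partial_{q_j}\Phi]-\partial_{q_j}A_m[\alpha_m,\partial_{p_k}\Phi]+[P,\partial_{p_k}\partial_{q_j}\Phi]=0.
\end{align*}
The scalar term $\partial_{q_j}V$ drops out of every commutator. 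Contracting the second identity with $\delta_{km}$ yields $[\alpha_k,\partial_{p_k}\Phi]=-\tfrac12[P,\Delta_p\Phi]$. This already produces the contribution $-\tfrac14\Delta_p\Phi$ in $\mathcal V$.

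\textbf{Step 2 (the $q$-derivative terms).} Next I replace $R^{jj}_k(x,q)$ by its value on the diagonal, $R^{jj}_k(q,q)=\tfrac12\Delta A_k(q)$; the cost of this replacement is handled in Step 3. The remaining symbol is
\[
\tfrac{1}{2i}\Big(\partial_{q_j}\big(\partial_{q_j}A_k[-\alpha_k,\Phi]\big)-\tfrac12\Delta A_k[-\alpha_k,\Phi]\Big)
=\tfrac{1}{2i}\Big(\tfrac12\Delta A_k[-\alpha_k,\Phi]+\partial_{q_j}A_k[-\alpha_k,\partial_{q_j}\Phi]\Big).
\]
I handle the two pieces on the right separately.
\begin{itemize}
\item By the first identity, $[-\alpha_k,\Phi]=[P,\partial_{p_k}\Phi]$. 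Since $\Delta A_k$ is scalar, the first piece equals $\tfrac1i[P,\tfrac14\Delta A_k\partial_{p_k}\Phi]$.
\item By the third identity, $\partial_{q_j}A_k[-\alpha_k,\partial_{q_j}\Phi]=[P,\partial_{q_j}A_k\partial_{p_k}\partial_{q_j}\Phi]-\partial_{q_j}A_k\partial_{q_j}A_m[\alpha_m,\partial_{p_k}\Phi]$.
\item The tensor $\partial_{q_j}A_k\partial_{q_j}A_m$ is symmetric in $(k,m)$. Contracting it with the second identity therefore gives $-\partial_{q_j}A_k\partial_{q_j}A_m[\alpha_m,\partial_{p_k}\Phi]=\tfrac12[P,\partial_{q_j}A_k\partial_{q_j}A_m\partial_{p_m}\partial_{p_k}\Phi]$.
\end{itemize}
Multiplying by $\tfrac{1}{2i}$ produces exactly the coefficients $\tfrac12$ and $\tfrac14$ appearing in \eqref{eq:HusimiExtraTermFinal}. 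Combined with Step 1, this gives the principal term $\mathrm{Op}^T_\hbar[\tfrac1i[P,\mathcal V]]$. The scalar factors $\partial A$ and $\Delta A$ can be moved inside the commutators with $P$ because they commute with the matrices.

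\textbf{Step 3 (the remainder, main obstacle).} The difficulty is that $R^{jj}_k(x,q)$ depends on the operator variable $x$, so $\mathrm{Op}^T_\hbar[R^{jj}_k(x,q)[-\alpha_k,\Phi]]$ is not a genuine Toeplitz operator. I would Taylor expand in $\theta(x-q)$ inside \eqref{eq:TaylorRemainder}:
\[
R^{jj}_k(x,q)=\tfrac12\Delta A_k(q)+\tfrac16\partial_i\Delta A_k(q)(x_i-q_i)+O\big(\|\nabla^4A\|_{L^\infty}|x-q|^2\big).
\]
As in the proof of Lemma \ref{lemma:MagneticPart}, I convert the factors of $(x-q)$ against the Gaussian $G_\hbar(x-q)$ into $\tfrac\hbar2\partial_{q}G_\hbar$, and then integrate by parts in $q$.
\begin{itemize}
\item The linear term becomes $\hbar$ times $\nabla^3A$ times a first derivative of $\Phi$, since the integration by parts can fall on $\Phi$ as well as on $\partial_i\Delta A_k$.
\item The quadratic remainder carries $|x-q|^2G_\hbar=O(\hbar)\,G_{2\hbar}$-type weights.
\end{itemize}
To bound the resulting pairings with $R(t)$, I would write each one in Weyl variables, as in the computations above. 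Using $R\ge0$ together with the uniform $L^2$ bound on $W_\hbar$ (Corollary \ref{thm:existence wigner}), each pairing is controlled by the $L^2$ norm of the symbol. This gives the error $O(\hbar)\big(\|\nabla^4A\|_{L^\infty}\|\Phi\|_{L^2}+\|\nabla^3A\|_{L^\infty}\|\nabla\Phi\|_{L^2}\big)$. I expect the main care to be needed in keeping the $x$-dependence under the Gaussian, so that these $L^2$-type bounds apply rather than a crude trace-norm bound.
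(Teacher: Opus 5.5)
Your proposal is correct and follows the paper's proof: the same identities obtained by differentiating $[P,\Phi]=0$ (which the paper packages as an auxiliary lemma plus the $U_1,U_2,U_3$ lemma), the same expansion $R^{jj}_k(x,q)=\tfrac12\Delta A_k(q)+\tfrac16\nabla\Delta A_k(q)\cdot(x-q)+O(\|\nabla^4A\|_{L^\infty}|x-q|^2)$, and the same Gaussian-moment and integration-by-parts control of the remainder via the $L^2$ bound on $\tilde R$. The only difference is cosmetic: you subtract the diagonal value $\tfrac12\Delta A_k$ before the algebra, so the coefficient $\tfrac14$ appears directly rather than as $\tfrac12-\tfrac14$.
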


To prove this Lemma suppose that $P$ is a polynomial such that the second order derivatives in $p$ and the mixed second order derivatives in $p$ and $q$ are proportional to the identity $I_4$, i.e.
\begin{align}
    \partial_{p_k}\partial_{p_m}P \propto I_4, && \partial_{p_k}\partial_{q_m}P \propto I_4.\label{eq:SecondOrderDerivatives}
\end{align}
This is clearly the case for the Dirac equation.

We have the following auxilliary lemma.
\begin{lemma} If $P$ is a matrix-valued polynomial such that \eqref{eq:SecondOrderDerivatives} holds and $[P,W]=0$, then
    \begin{equation}
    \label{eq:HusimiExtraTerm1}
        [\partial_{p_k}P,\partial_{q_j}W]=-[\partial_{q_j}P,\partial_{p_k}W]-[P,\partial_{p_k}\partial_{q_j}W].
    \end{equation}
    and
    \begin{equation}
        \label{eq:HusimiExtraTerm3}
        [\partial_{p_k}P,\partial_{p_j}W]+[\partial_{p_j}P,\partial_{p_k}W]=-[P,\partial_{p_j}\partial_{p_k}W].
    \end{equation}
\begin{proof}
First
\[
[P,W]=0\implies\partial_{p_k}[P,W]=[\partial_{p_k}P,W]+[P,\partial_{p_k}W]=0.
\]
Thus we have proved that
\[
[\partial_{p_k}P,W]=-[P,\partial_{p_k}W].
\]
It follows that
\[
\partial_{q_j}\partial_{p_k}P \propto I_4 \implies\partial_{q_j}[\partial_{p_k}P,W]=[\partial_{p_k}P,\partial_{q_j}W]=-[\partial_{q_j}P,\partial_{p_k}W]-[P,\partial_{p_k}\partial_{q_j}W].
\]
Therefore
\[
[\partial_{p_k}P,\partial_{q_j}W]=-[\partial_{q_j}P,\partial_{p_k}W]-[P,\partial_{p_k}\partial_{q_j}W].
\]
This proves \eqref{eq:HusimiExtraTerm1}.
Moreover
\[
\begin{aligned}
0=[P,W]\implies 0=&\partial_{p_j}\partial_{p_k}[P,W]=\partial_{p_j}\left([\partial_{p_k}P,W]+[P,\partial_{p_k}W]\right)
\\
=&[\partial_{p_k}P,\partial_{p_j}W]+[\partial_{p_j}P,\partial_{p_k}W]+[P,\partial_{p_j}\partial_{p_k}W],
\end{aligned}
\]
since
\[
\partial_{p_j}\partial_{p_k}P \propto I_4.
\]
This proves \eqref{eq:HusimiExtraTerm3}.
\end{proof}
\end{lemma}
Now we consider $P = \alpha_k(p_k-A_k(q))+\beta +V(q)I_4$. Then
\begin{align}
\label{eq:HusimiExtraTerm2}
    \partial_{q_j}P=-\partial_{q_j}A_m\partial_{p_m}P + \partial_{q_j}VI_4, && \partial_{p_k}P = \alpha_k.
\end{align}
Moreover we have
\begin{lemma}
    Let \begin{equation*}
        P(p,q) = \alpha_k(p_k-A_k(q))+\beta +V(q)I_4
    \end{equation*}
    and $W$ such that $[P,W]=0$. Then
    \begin{enumerate}
    \item \begin{equation}
\tfrac{1}{2}[\alpha_k,\partial_{p_k}W]=[P,U_1], \quad U_1 := -\tfrac{1}{4}\Delta_p W.
\end{equation}
\item \begin{equation}
\tfrac{1}{2}\partial_{q_j}A_k\partial_{q_j}[-\alpha_k,W]=[P,U_2], \quad
U_2:=\tfrac14\partial_{q_j}A_k\partial_{q_j}A_m\partial_{p_m}\partial_{p_k}W+ \tfrac{1}{2}\partial_{q_j}A_k\partial_{p_k}\partial_{q_j}W.
\end{equation}
\item \begin{align}
\label{eq:HusimiExtraTerm4}
    \tfrac{1}{2}\partial_{q_j}\left(\partial_{q_j}A_k[-\alpha_k,W]\right)=[P,U_3],
\end{align}
with 
\begin{align}
    U_3 &:= U_2 + \tfrac{1}{2}\Delta A_k \partial_{p_k}W \nonumber\\
    &= \tfrac14\partial_{q_j}A_k\partial_{q_j}A_m\partial_{p_m}\partial_{p_k}W+\tfrac{1}{2}\partial_{q_j}A_k\partial_{p_k}\partial_{q_j}W+ \tfrac{1}{2}\Delta A_k \partial_{p_k}W .
\end{align}
\end{enumerate}
\begin{proof}
For the first claim we have 
\[
[P,W]=0\implies 0=\Delta_p[P,W]=[\Delta_pP,W]+2[\partial_{p_k}P,\partial_{p_k}W]+[P,\Delta_pW].
\]
Therefore
\[
\Delta_pP=0\implies[\alpha_k,\partial_{p_k}W]=[\partial_{p_k}P,\partial_{p_k}W]=-\tfrac12[P,\Delta_pW],
\]
so that
\[
\tfrac{1}{2}[\alpha_k,\partial_{p_k}W]=[P,U_1]\quad\text{ with }U_1:=-\tfrac14\Delta_pW.
\]

For the second claim, using \eqref{eq:HusimiExtraTerm1}, \eqref{eq:HusimiExtraTerm3} and \eqref{eq:HusimiExtraTerm2} we find
\[
\begin{aligned}
\partial_{q_j}A_k\partial_{q_j}[-\alpha_k,W]=&-\partial_{q_j}A_k[\partial_{p_k}P,\partial_{q_j}W]
\\
=&\partial_{q_j}A_k[\partial_{q_j}P,\partial_{p_k}W]+\partial_{q_j}A_k[P,\partial_{p_k}\partial_{q_j}W]
\\
=&-\partial_{q_j}A_k\partial_{q_j}A_m[\partial_{p_m}P,\partial_{p_k}W]+\partial_{q_j}A_k[P,\partial_{p_k}\partial_{q_j}W]\\
=&-\partial_{q_j}A_k\partial_{q_j}A_m\tfrac12([\partial_{p_m}P,\partial_{p_k}W]+[\partial_{p_k}P,\partial_{p_m}W])
\\
&+\partial_{q_j}A_k[P,\partial_{p_k}\partial_{q_j}W]
\\
=&\partial_{q_j}A_k\partial_{q_j}A_m\tfrac12[P,\partial_{p_m}\partial_{p_k}W]+\partial_{q_j}A_k[P,\partial_{p_k}\partial_{q_j}W],
\end{aligned}
\]
where in the second to last line we used the symmetry of $\partial_{q_j}A_k\partial_{q_j}A_m$ in $k,m$.

For the last claim note that
\begin{align*}
   \tfrac{1}{2}[-\alpha_k,\partial_{q_j}\left(\partial_{q_j}A_kW\right)] &= -\tfrac{1}{2}\partial_{q_j}A_k\partial_{q_j}[\alpha_k,W] - \tfrac{1}{2}\Delta A_k[\alpha_k,W] \\&=[P,U_2]-\tfrac{1}{2}\Delta A_k[\partial_{p_k}P,W] \\ &=[P,U_2] -\tfrac{1}{2}\Delta A_k \left(\partial_{p_k}[P,W]-[P,\partial_{p_k}W]\right) \\
   &=[P,U_2] + \tfrac{1}{2}[P,\Delta A_k \partial_{p_k}W].
\end{align*}
This concludes the proof.
\end{proof}
\end{lemma}

\begin{proof}[Proof of Lemma \ref{lemma:HusimiExtraTerm}]
    The first two terms combine to
    \begin{equation*}
        - \tfrac{1}{4}\Delta_p \Phi + \tfrac14\partial_{q_j}A_k\partial_{q_j}A_m\partial_{p_m}\partial_{p_k} \Phi +\tfrac{1}{2}\partial_{q_j}A_k\partial_{p_k}\partial_{q_j} \Phi + \tfrac{1}{2}\Delta A_k \partial_{p_k} \Phi.
    \end{equation*}
    For the third term we have the expansion
    \begin{align*}
        R^{jj}_k(x,q) = \tfrac{1}{2}\Delta A_k(q) + \tfrac{1}{6}\nabla \Delta A_k(q) \cdot (x-q) + O(|x-q|^2\|\nabla^4 A\|_{L^{\infty}}).
    \end{align*}
    Then
    \begin{align*}
        \Tr({R}(t) \mathrm{Op}^T_{\hbar}\left[\tfrac{1}{2i} R^{jj}_k(x,q)[-\alpha_k,\Phi]\right]) &= \Tr({R}(t) \mathrm{Op}^T_{\hbar}\left[\tfrac{1}{2i} [-\alpha_k, \tfrac{1}{2}\Delta A_k\Phi]\right]) \\ &\qquad \qquad  + \Tr({R}(t) \mathrm{Op}^T_{\hbar}\left[\tfrac{1}{2i} \nabla \tfrac{1}{6}\Delta A_k\cdot (x-q) [-\alpha_k, \Phi]\right])) \\
        &\qquad \qquad  + \Tr({R}(t) \mathrm{Op}^T_{\hbar}\left[\tfrac{1}{2i} O(|x-q|^2\|\nabla^4 A\|_{L^{\infty}}) [-\alpha_k, \Phi]\right]))
    \end{align*}
    The first term on the right-hand side becomes
    \begin{align*}
        \Tr({R}(t) \mathrm{Op}^T_{\hbar}\left[\tfrac{1}{2i} [-\partial_{p_k}P, \tfrac{1}{2}\Delta A_k\Phi]\right]) &= \Tr({R}(t) \mathrm{Op}^T_{\hbar}\left[\tfrac{1}{i} [P, \tfrac{1}{4}\Delta A_k\partial_{p_k}\Phi]\right]).
    \end{align*}
    Again, by using $(x_j-q_j)G_{\hbar}(x-q) = \tfrac{\hbar}{2}\partial_{q_j}G_{\hbar}(x-q)$, the remaining terms are controlled by $\hbar$ and the norms on the right-hand side of \eqref{eq:HusimiExtraFormula}. Thus the final expression for $\mathcal{V}$ is
    \begin{equation*}
        \mathcal{V} = - \tfrac{1}{4}\Delta_p \Phi + \tfrac14\partial_{q_j}A_k\partial_{q_j}A_m\partial_{p_m}\partial_{p_k} \Phi +\tfrac{1}{2}\partial_{q_j}A_k\partial_{p_k}\partial_{q_j} \Phi + \tfrac{1}{4}\Delta A_k \partial_{p_k} \Phi,
    \end{equation*}
    as desired.
\end{proof}


\subsection{Final result}

\begin{lemma}\label{lemma:ExpressionDistance}
Let
\begin{equation*}
    H(t) = \alpha \cdot (-i\hbar \nabla - A(t)) + \beta +V(t)
\end{equation*}
be the Dirac Hamiltonian and
\begin{equation*}P(t,q,p) = \alpha \cdot(p-A(t,q))+\beta +V(t,q)I_4.
\end{equation*}
be its matrix-valued Dirac symbol. Let $\Phi$ be the solution  of the matrix Liouville equation \eqref{eq:test equation}. Then we have
\begin{align}
\label{eq:ExpressionDistance}
\begin{split}
    &\Tr(R(t)\tfrac{i}{\hbar}[ H(t),\mathrm{Op}^T_{\hbar}[\Phi(t)]]) + \Tr(R(t)\mathrm{Op}^T_{\hbar}\left[\tfrac{1}{2}\left(\{P(t),\Phi(t)\}-\{\Phi(t),P(t)\}\right)+\tfrac{1}{i}[P(t),\mathcal{Z}(t)]\right]) \\
    &\qquad = \underbrace{\Tr(R(t)\mathrm{Op}^T_{\hbar}\left[\tfrac{1}{i}[P,\mathcal{V}+\mathcal{Z}]\right])}_{\text{\emph{Lagrange multiplier and Husimi extra term}}} + \underbrace{O(\hbar)\left(\|\nabla^4 A(t)\|_{L^{\infty}}\|\Phi(t)\|_{L^2} + \|\nabla^3 A(t)\|_{L^{\infty}}\|\nabla \Phi(t)\|_{L^2}\right)}_{\text{\emph{Remainder Husimi extra term}}}\\
 &\qquad \qquad +  \hbar \left[\underbrace{\Tr({R}(t) \mathrm{Op}^T_{\hbar}\left[\tfrac{i}{4} \partial_{q_j}\partial_{q_i}\left(R^{ij}_k(x,q)\left[-\alpha_k,\Phi\right]\right)\right])}_{\text{\emph{Remainder Taylor expansion}}} +\underbrace{\Tr(R(t)\Delta[A_k] \mathrm{Op}^T_{\hbar}\left[[\alpha_k,\Delta_p\Phi]\right])}_{\text{\emph{Second order finite difference}}}\right] \\
    &\qquad \qquad - \underbrace{\Tr(R(t)\left(\delta[A_k]_j\mathrm{Op}^T_{\hbar}\left[\tfrac{1}{2}[-\alpha_k,\partial_{p_j}\Phi]_+\right]-\mathrm{Op}^T_{\hbar}\left[\partial_{q_j}A_k(q)\tfrac{1}{2}[-\alpha_k,\partial_{p_j}\Phi]_+\right]\right))}_{\text{\emph{First order finite difference (magnetic potential)}}} \\
    &\qquad \qquad - \underbrace{\Tr(R(t)\left(\delta[V]_j\mathrm{Op}^T_{\hbar}\left[\partial_{p_j}\Phi\right]-\mathrm{Op}^T_{\hbar}\left[\partial_{q_j}V(q)\partial_{p_j}\Phi\right]\right))}_{\text{\emph{First order finite difference (electric potential)}}}
    \end{split}
\end{align}
\begin{proof}
    The formula is the result of Lemmas \ref{lemma:KineticPart}-\ref{lemma:HusimiExtraTerm} together with the observation that
$\Phi$ is a solution of the test equation \eqref{eq:test equation} and therefore satisfies $[P(t),\Phi(t)] = 0$ for all $t\in [0,T]$. Moreover, we note that $\mathcal{L} = \tfrac{1}{i}[P,\mathcal{Z}]$. This proves the lemma.
\end{proof}
\end{lemma}

\section{\texorpdfstring{Gronwall estimate for the pseudo-distance $\mathfrak{D}(t)$}{Gronwall estimate for the pseudo-distance D(t)}}
\label{sec:GronwallDirac}

In this section we use expression \eqref{eq:ExpressionDistance} found in Lemma \ref{lemma:ExpressionDistance} to prove the Gronwall estimate. We treat (in this order) the first order finite difference terms, which yield a $\sqrt{h}$ convergence rate; the second order finite difference and Taylor remainder term, which yield $\hbar$ convergence rate and finally the Lagrange multiplier, which is the most delicate part, as we have to control the commutator $[P,W_{\hbar}]$ of the Dirac symbol and the Wigner transform, for which we have to propagate derivatives of $W_{\hbar}$ in time.

\subsection{First-order finite difference}

Here, we treat the two first order finite difference terms on the right hand side of \eqref{eq:ExpressionDistance}. The case of the electric potential is completely analogous to the scalar case since $V$ acts as a multiple of the identity matrix $I_4$. The magnetic field $A$ acts on the anticommutator $\tfrac{1}{2}[\alpha_k,\partial_{p_j}\Phi]_+$. However, since $\alpha_k$ is a constant matrix, the proof is similar to the electric potential. 
\begin{lemma}\label{lemma:GronwallFirstOrder}
Assume that $\Phi \in C([0,T],H^s)$ is a solution of the matrix Liouville equation \eqref{eq:test equation} with $s$ large enough. Moreover, suppose that $\nabla A$ and $\nabla V$ are Lipschitz-continuous and that $R$ satisfies the mixed state condition
\begin{equation}
    \|R^{\mathrm{in}}\|_{\mathrm{HS}} \leq C(2\pi\hbar)^{d/2} \Leftrightarrow \|\tilde{R}\|_{L^2_{x,y}}\leq C.
\end{equation}
Then
    \begin{align*}
        &\left|\Tr(R(t)\left(\delta[V]_j\mathrm{Op}^T_{\hbar}\left[\partial_{p_j}\Phi\right]-\mathrm{Op}^T_{\hbar}\left[\partial_{q_j}V(q)\partial_{p_j}\Phi\right]\right))\right| \\
        &\qquad \leq C\sqrt{\hbar} \|\nabla^2V\|_{L^{\infty}}\|\nabla_p\Phi\|_{L^2_{q,p}}.
    \end{align*}
    and
     \begin{align*}
        &\left|\Tr(R(t)\left(\delta[A_k]_j\mathrm{Op}^T_{\hbar}\left[\tfrac{1}{2}[\alpha_k,\partial_{p_j}\Phi]_+\right]-\mathrm{Op}^T_{\hbar}\left[\partial_{q_j}A_k(q)\tfrac{1}{2}[\alpha_k,\partial_{p_j}\Phi]_+\right]\right))\right| \\
        &\qquad \qquad  \leq C\sqrt{\hbar}\|\nabla^2 A\|_{L^{\infty}}\|\nabla_p\Phi\|_{L^2_{q,p}}.
    \end{align*}
    \begin{proof}
    \underline{The electric potential $V$:} From the proof of Lemma \ref{lemma:ElectricPart} we have
        \begin{align*}
            &\Tr(R(t)\left(\delta[V]_j\mathrm{Op}^T_{\hbar}\left[\partial_{p_j}\Phi\right]-\mathrm{Op}^T_{\hbar}\left[\partial_{q_j}V(q)\partial_{p_j}\Phi\right]\right)) \\&\qquad \qquad = \int \hbar^d dx dy  \tilde{R}(t,x,y) \left[\frac{V\left(x+\tfrac{\hbar y}{2}\right)-V\left(x-\tfrac{\hbar y}{2}\right)}{\hbar|y|} \frac{y_j}{|y|}\tilde{K}_{q,p}[\partial_{p_j}\Phi](x,y)- \tilde{K}_{q,p}[\partial_{q_j}V(q)\partial_{p_j}\Phi](x,y)\right] \\
            &= \int \tfrac{dx dy dq}{(2\pi)^d}  \tilde{R}(t,x,y) \left[\frac{V\left(x+\tfrac{\hbar y}{2}\right)-V\left(x-\tfrac{\hbar y}{2}\right)}{\hbar|y|} \frac{y_j}{|y|}-\partial_{q_j}V(q)\right]\mathcal{F}^{-1}_{p\rightarrow y}\left[\partial_{p_j}\Phi(q,p)\right] G_{\hbar}(x-q)e^{-\frac{\hbar}{4}|y|^2} \\
            &=i \int \tfrac{dx dy dq}{(2\pi)^d}   \tilde{R}(t,x,y) \left[\frac{V\left(x+\tfrac{\hbar y}{2}\right)-V\left(x-\tfrac{\hbar y}{2}\right)}{\hbar} -y_j\partial_{x_j}V(x)\right]\mathcal{F}^{-1}_{p\rightarrow y}\left[\Phi(q,p)\right] G_{\hbar}(x-q)e^{-\frac{\hbar}{4}|y|^2} \\
            &\qquad + \int \tfrac{dx dy dq}{(2\pi)^d}   \tilde{R}(t,x,y) \left[\partial_{x_j}V(x)-\partial_{q_j}V(q)\right]\mathcal{F}^{-1}_{p\rightarrow y}\left[\partial_{p_j}\Phi(q,p)\right] G_{\hbar}(x-q)e^{-\frac{\hbar}{4}|y|^2} \\
            &=: I_1 + I_2.
        \end{align*}
        We now estimate $|I_1|$ and $|I_2|$ individually. By assumption, $\nabla V$ is Lipschitz and thus we have
\begin{align}
\label{eq:VLipschitz1_Dirac}
\left|\tfrac{1}{\hbar}\left(V(x+\tfrac{\hbar y}{2})-V(x-\tfrac{\hbar y}{2})\right)- y_j\partial_{x_j} V(x)  \right| \leq C\hbar |y|^2 \|\nabla^2 V\|_{\infty},
\end{align}
and
\begin{align}
\label{eq:VLipschitz2_Dirac}
\left|\partial_{x_j}V(x) - \partial_{q_j}V(q)  \right| \leq C |x-q| \|\nabla^2 V\|_{\infty}.
\end{align}
Then we use \eqref{eq:VLipschitz1_Dirac}, the Cauchy-Schwarz and Young inequalities as well as Plancherel's theorem to estimate
\begin{align*}
|I_1| &\leq C\|\tilde{R}\|_{L^2_{x,y}}\\&\quad\cdot\left\|\int_{\mathbb{R}^{d}}\left|\tfrac{1}{\hbar}\left(V(x+\tfrac{\hbar y}{2})-V(x-\tfrac{\hbar y}{2})\right)- y_j\partial_{x_j} V(x)  \right| \left| \mathcal{F}^{-1}_{p\rightarrow y}[\Phi](q,y)\right| G_{\hbar}(x-q)e^{-\frac{\hbar}{4}|y|^2} dq \right\|_{L^2_{x,y}} \\
&\leq C \left\|\sup_{x,y} \left|\tfrac{1}{\hbar}\left(V(x+\tfrac{\hbar y}{2})-V(x-\tfrac{\hbar y}{2})\right)- y_j\partial_{x_j} V(x) \right|\int_{\mathbb{R}^{d}} \left| \mathcal{F}^{-1}_{p\rightarrow y}[\Phi](q,y)\right| G_{\hbar}(x-q)e^{-\frac{\hbar}{4}|y|^2} dq \right\|_{L^2_{x,y}}
\\
&\leq C\sqrt{\hbar}\|\nabla^2 V \|_{L^{\infty}} \left\|\int_{\mathbb{R}^{d}} \left| y\mathcal{F}^{-1}_{p\rightarrow y}[\Phi](q,y)\right| G_{\hbar}(x-q)\sqrt{\hbar}|y|e^{-\frac{\hbar}{4}|y|^2} dq \right\|_{L^2_{x,y}} \\
&\leq C\sqrt{\hbar}\|\nabla^2 V \|_{L^{\infty}} \left\| \left(\left| \mathcal{F}^{-1}_{p\rightarrow y}[\partial_{p_j}\Phi](\cdot,y)\right| \ast_x G_{\hbar}(\cdot)\right)(x)\right\|_{L^2_{x,y}} 
\\
&\leq C\sqrt{\hbar}\|\nabla^2 V \|_{L^{\infty}} \left\| \left\| \mathcal{F}^{-1}_{p\rightarrow y}[\nabla_{p_j} \Phi](x,y) \right\|_{L^2_x}\right\|_{L^2_{y}} 
\\
&= C\sqrt{\hbar}\|\nabla^2 V \|_{L^{\infty}} \left\| \left\| \nabla_y \Phi \right\|_{L^2_x}\right\|_{L^2_{y}} 
\\
&= C\sqrt{\hbar}\|\nabla^2 V \|_{L^{\infty}} \left\| \nabla_y \Phi \right\|_{L_{x,y}}. 
\end{align*} 
For $I_2$ we proceed as follows
\begin{align*}
|I_2| &\leq C\left\|\int_{\mathbb{R}^{d}}\left|\partial_{x_j}V(x)- \partial_{q_j} V(q) \right| \left| \mathcal{F}^{-1}_{p\rightarrow y}[\partial_{p_j}\Phi](q,y)\right| G_{\hbar}(x-q)e^{-\frac{\hbar}{4}|y|^2} dq \right\|_{L^2_{x,y}} \\
&\leq C\sqrt{\hbar}\|\nabla^2 V \|_{L^{\infty}} \left\|\int_{\mathbb{R}^{d}} \left| \mathcal{F}^{-1}_{p\rightarrow y}[\partial_{p_j}\Phi](q,y)\right| \frac{|x-q|}{\sqrt{\hbar}}G_{\hbar}(x-q)e^{-\frac{\hbar}{4}|y|^2} dq \right\|_{L^2_{x,y}} \\
&\leq C\sqrt{\hbar}\|\nabla^2 V \|_{L^{\infty}} \left\| \left(\left| \mathcal{F}^{-1}_{p\rightarrow y}[\partial_{p_j}\Phi](\cdot,y)\right| \ast_x \left(\frac{|\cdot|}{\sqrt{\hbar}}G_{\hbar}(\cdot)\right)\right)(x)\right\|_{L^2_{x,y}}
\\
&\leq C\sqrt{\hbar}\|\nabla^2 V \|_{L^{\infty}} \left\| \left\| \mathcal{F}^{-1}_{p\rightarrow y}[\partial_{p_j}\Phi](x,y)\right\|_{L^2_x}\right\|_{L^2_{y}} 
\\
&= C\sqrt{\hbar}\|\nabla^2 V \|_{L^{\infty}} \left\| \nabla_y \Phi \right\|_{L^2_{x,y}}.
\end{align*} 
This concludes the proof for the electric potential. \\

\underline{The magnetic potential $A$:} Similarly, we rewrite
        \begin{align*}
            &\Tr(R(t)\left(\delta[A_k]_j\mathrm{Op}^T_{\hbar}\left[\tfrac{1}{2}[\alpha_k,\partial_{p_j}\Phi]_+\right]-\mathrm{Op}^T_{\hbar}\left[\partial_{q_j}A_k(q)\tfrac{1}{2}[\alpha_k,\partial_{p_j}\Phi]_+\right]\right)) \\&  = \int \hbar^d dx dy  \tilde{R}(t,x,y) \Bigg[\frac{A_k\left(x+\tfrac{\hbar y}{2}\right)-A_k\left(x-\tfrac{\hbar y}{2}\right)}{\hbar|y|} \frac{y_j}{|y|}\tilde{K}_{q,p}[\tfrac{1}{2}[\alpha_k,\partial_{p_j}\Phi]_+](x,y) \\ &\qquad - \tilde{K}_{q,p}[\partial_{q_j}A_k(q)\tfrac{1}{2}[\alpha_k,\partial_{p_j}\Phi]_+](x,y)\Bigg] \\
            &  = i\int \tfrac{dx dy dq}{(2\pi)^d}   \tilde{R}(t,x,y) \Bigg[\frac{A_k\left(x+\tfrac{\hbar y}{2}\right)-A_k\left(x-\tfrac{\hbar y}{2}\right)}{\hbar}-y_j\partial_{x_j}A_k(x)\Bigg]\mathcal{F}_{p\rightarrow y}\left[\tfrac{1}{2}[\alpha_k,\Phi]_+\right] (q,y)\\&\qquad \cdot G_{\hbar}(x-q)e^{-\frac{\hbar}{4}|y|^2} \\
            &\qquad + \int \tfrac{dx dy dq}{(2\pi)^d}  \tilde{R}(t,x,y) \left[\partial_{x_j}A_k(x)-\partial_{q_j}A_k(q)\right]\mathcal{F}_{p\rightarrow y}\left[\tfrac{1}{2}[\alpha_k,\partial_{p_j}\Phi]_+\right] (q,y)\\&\qquad \qquad \cdot G_{\hbar}(x-q)e^{-\frac{\hbar}{4}|y|^2}.
        \end{align*}
        From here, the estimate continues exactly as above, since
        \begin{align*}
            \left|\tfrac{1}{2}[\alpha_k,\partial_{p_j}\Phi(q,p)]_+\right| \leq |\partial_{p_j}\Phi(q,p)|,
        \end{align*}
        as $\alpha_k$ is a constant matrix.
    \end{proof}
\end{lemma}

\subsection{Second-order finite difference and Taylor remainder term}
In this section we study the $O(\hbar)$ terms on the right hand side of \eqref{eq:ExpressionDistance}.
\begin{lemma}\label{lemma:GronwallSecondOrder}
Assume that $\Phi \in C([0,T],H^s)$ is a solution of the matrix Liouville equation \eqref{eq:test equation} with $s$ large enough. Moreover, suppose that $\nabla^s A$ is Lipschitz-continuous for $s$ large enough and that $R$ satisfies the mixed state condition
\begin{equation}
    \|R^{\mathrm{in}}\|_{\mathrm{HS}} \leq C(2\pi\hbar)^{d/2} \Leftrightarrow \|\tilde{R}\|_{L^2_{x,y}}\leq C.
\end{equation}
Then
    \begin{align}
        &\left|\tfrac{i\hbar}{4} \Tr({R}(t) \mathrm{Op}^T_{\hbar}\left[\partial_{q_j}\partial_{q_i}\left(R^{ij}_k(x,q)\left[\alpha_k,\Phi\right]\right)\right])\right| \leq C\hbar\|A\|_{W^{4,\infty}} \| \Phi\|_{H^2_{q,p}},
        \label{eq:EstimateTaylorRemainder}
        \end{align}
        and
        \begin{align}
        \label{eq:EstimateSecondFiniteDiff}
        \left|\hbar \Tr(R(t)\Delta[A_k] \mathrm{Op}^T_{\hbar}\left[[\alpha_k,\Delta_p\Phi]\right]) \right| \leq C\hbar\|A\|_{W^{2,\infty}}  \|\Phi\|_{H^2_{q,p}}.
    \end{align}
    \begin{proof}
        For the Taylor remainder term \eqref{eq:EstimateTaylorRemainder} we have, together with $\|\tilde{R}\|_{L^2}\leq C$, 
        \begin{align*}
            &\left|\tfrac{i\hbar}{4}\Tr({R}(t) \mathrm{Op}^T_{\hbar}\left[ \partial_{q_j}\partial_{q_i}\left(R^{ij}_k(x,q)\left[\alpha_k,\Phi\right]\right)\right])\right| \\&\qquad \qquad \leq C\hbar \|\tilde{R}\|_{L^2_{x,y}}\left\|\int \partial_{q_j}\partial_{q_i}\left(R^{ij}_k(x,q)\mathcal{F}^{-1}_{p\rightarrow y}\left[[\alpha_k,\Phi]\right](q,y)\right)G_{\hbar}(x-q)e^{-\frac{\hbar}{4}|y|^2}dq\right\|_{L^2_{x,y}}
            \\&\qquad \qquad \leq C\hbar\left\|\int \partial_{q_j}\partial_{q_i}\left(\int_0^1 (1-\theta)\partial_i \partial_j A_k(q+\theta(x-q))d\theta\mathcal{F}^{-1}_{p\rightarrow y}\left[[\alpha_k,\Phi]\right](q,y)\right)\right.\\ &\qquad \qquad \qquad \left.\cdot G_{\hbar}(x-q)e^{-\frac{\hbar}{4}|y|^2}dq\right\|_{L^2_{x,y}} \\
            &\qquad \qquad \leq C\hbar\int_0^1 (1-\theta) \left\|\partial_{q_j}\partial_{q_i}\left(\partial_i \partial_j A_k(q+\theta(x-q))\mathcal{F}^{-1}_{p\rightarrow y}\left[[\alpha_k,\Phi]\right](q,y)\right) G_{\hbar}(x-q)e^{-\frac{\hbar}{4}|y|^2}dq \right\|_{L^2_{x,y}}d\theta \\&\qquad \qquad \leq \tfrac{1}{2}C\hbar\sup_{0\leq\theta\leq 1}\left\|\partial_{q_j}\partial_{q_i}\left(\partial_i \partial_j A_k(q+\theta(x-q))\mathcal{F}^{-1}_{p\rightarrow y}\left[[\alpha_k,\Phi]\right](q,y)\right) G_{\hbar}(x-q)e^{-\frac{\hbar}{4}|y|^2}dq \right\|_{L^2_{x,y}} \\
            &\qquad \qquad \leq \tfrac{1}{2}C\hbar \left(\|\nabla^{4} A_k\|_{L^{\infty}} \|\Phi\|_{L^2_{x,y}}+ \|\nabla^{3} A_k\|_{L^{\infty}} \|\nabla_q \Phi\|_{L^2_{x,y}} + \|\nabla^{2} A_k\|_{L^{\infty}} \|\nabla_q^2\Phi\|_{L^2_{q,p}}\right).
        \end{align*}

        For the second order finite difference term \eqref{eq:EstimateSecondFiniteDiff} we have
        \begin{align*}
            &\left|\hbar \Tr(R(t)\Delta[A_k] \mathrm{Op}^T_{\hbar}\left[[\alpha_k,\Delta_p\Phi]\right]) \right| \\&\qquad \qquad \leq C\hbar \|\tilde{R}\|_{L^2_{x,y}} \left\| \int_{\mathbb{R}^d_q} \Delta[A_k](x,y)  \mathcal{F}^{-1}_{p\rightarrow y}[[\alpha_k,\Delta_p\Phi]](q,y)G_{\hbar}(x-q)e^{-\frac{\hbar}{4}|y|^2} dq\right\|_{L^2_{x,y}}
        \end{align*}
        Now $\Delta[A_k]$ is given by, cf. \eqref{eq:SecondOrderFinDiff}
         \begin{equation}
     \Delta[A_k](x,y) := i\frac{A_k\left(x+\frac{\hbar y}{2}\right)-2A_k(x)+A_k\left(x-\frac{\hbar y}{2}\right)}{2\hbar^2|y|^2} 
 \end{equation}
 and so
 \begin{equation}
     \sup_{x,y}\left|\Delta[A_k](x,y)\right| \leq \|\nabla^2 A_k\|_{L^{\infty}}.
 \end{equation}
 Hence, together with $\|\tilde{R}\|_{L^2}\leq C$,
 \begin{align*}
            &\left|{\hbar} \Tr(R(t)\Delta[A_k] \mathrm{Op}^T_{\hbar}\left[[\alpha_k,\Delta_p\Phi]\right]) \right| \\&\qquad \qquad \leq C\hbar \|\tilde{R}\|_{L^2_{x,y}} \|\nabla^2 A_k\|_{L^{\infty}} \left\|\int_{\mathbb{R}^d_q} \mathcal{F}^{-1}_{p\rightarrow y}[[\alpha_k,\Delta_p\Phi]](q,y)G_{\hbar}(x-q)e^{-\frac{\hbar}{4}|y|^2} dq\right\|_{L^2_{x,y}} \\
            &\qquad \qquad \leq C\hbar  \|\nabla^2 A_k\|_{L^{\infty}} \left\| \mathcal{F}^{-1}_{p\rightarrow y}[[\alpha_k,\Delta_p\Phi]](\cdot,y) \ast G_{\hbar}(\cdot) e^{-\frac{\hbar}{4}|y|^2} \right\|_{L^2_{x,y}} \\
            &\qquad \qquad \leq C\hbar \|\nabla^2 A_k\|_{L^{\infty}} \left\|\Delta_p \Phi\right\|_{L^2_{q,p}}.
        \end{align*}
        This completes the proof.
    \end{proof}
\end{lemma}
\subsection{Lagrange multiplier and Gronwall estimate for the commutator}
\label{sec:Lagrange multiplier}
We seek to estimate the remaining term on the right hand side of \eqref{eq:ExpressionDistance}, i.e.
\begin{align*}
    \Tr(R(t)\mathrm{Op}^T_{\hbar}\left[\tfrac{1}{i}[P(t),\mathcal{Z}(t)+\mathcal{V}(t)]\right]),
\end{align*}
where
\begin{equation*}
    \mathcal{L} = \tfrac{1}{i}[P,\mathcal{Z}]
\end{equation*}
is the Lagrange multiplier of the dual matrix Liouville equation and where the Husimi extra term $\mathcal{V}$ \eqref{eq:HusimiExtraTermFinal} is given by (cf. Lemma \ref{lemma:HusimiExtraTerm})
  \begin{equation*}
        \mathcal{V} = - \tfrac{1}{4}\Delta_p \Phi + \tfrac14\partial_{q_j}A_k\partial_{q_j}A_m\partial_{p_m}\partial_{p_k} \Phi +\tfrac12\partial_{q_j}A_k\partial_{p_k}\partial_{q_j} \Phi+\tfrac14 \Delta A_k \partial_{p_k} \Phi,
    \end{equation*}
We have the following estimate for the Lagrange multiplier including the Husimi extra term.
    \begin{lemma}
\label{thm:EstimateLagrangeMultiplier}
    Let $\Phi \in C([0,T],H^s)$ be a solution of the matrix Liouville equation \eqref{eq:test equation} with $s$ large enough and suppose that $A,V \in H^s_{t,x}$ with $s$ large enough. Moreover, suppose that $R$ satisfies the hypotheses of Theorem \ref{thm:main}, i.e. \eqref{eq:vonNeumannInitialDataMainThm}. Then
    \begin{align*}
    &\left| \Tr(R(t)\mathrm{Op}^T_{\hbar}\left[\tfrac{1}{i}[P(t),\mathcal{Z}(t)+\mathcal{V}(t)]\right])\right|  \leq C\left[\hbar (1+F)\exp(Ct(1+F))\|\Phi(t)\|_{H^2}\right],
\end{align*}
where
\begin{equation}
    F \equiv F(\|A\|_{W^{2,\infty}_{t,q}},\|V\|_{W^{2,\infty}_{t,q}}).
\end{equation}

\end{lemma}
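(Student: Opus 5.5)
The plan is to move the commutator from the test function onto the state, so that the Lagrange-multiplier term is controlled by the commutator $[P,W_\hbar]$ rather than by $\mathcal{Z}+\mathcal{V}$ themselves. Write $\mathcal{W}:=\mathcal{Z}+\mathcal{V}$ and use the Husimi representation
\[
\Tr\bigl(R(t)\mathrm{Op}^T_\hbar[\tfrac1i[P,\mathcal{W}]]\bigr)=\iint \tr\bigl(\tilde W_\hbar(t)\,\tfrac1i[P,\mathcal{W}]\bigr)\,dq\,dp .
\]
By cyclicity of the matrix trace, $\tr(\tilde W_\hbar[P,\mathcal{W}])=\tr([\tilde W_\hbar,P]\,\mathcal{W})$. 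Hence the quantity to bound is $\|[P,\tilde W_\hbar]\|_{L^2}\,\|\mathcal{W}\|_{L^2}$, up to a localisation issue discussed below.

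Next I compare $[P,\tilde W_\hbar]$ with $[P,W_\hbar]$. Since $\tilde W_\hbar=G_\hbar\ast_{q,p}W_\hbar$, we have
\[
[P,\tilde W_\hbar]-G_\hbar\ast[P,W_\hbar]=\iint G_\hbar(q-q')G_\hbar(p-p')\bigl[P(q,p)-P(q',p'),W_\hbar(q',p')\bigr]\,dq'\,dp'.
\]
The difference $P(q,p)-P(q',p')$ is $\alpha\cdot(p-p')-\alpha\cdot(A(q)-A(q'))+(V(q)-V(q'))$. It is bounded by $(1+\|\nabla A\|_\infty+\|\nabla V\|_\infty)(|q-q'|+|p-p'|)$, and since $|z|G_\hbar(z)$ has $L^1$ norm $O(\sqrt\hbar)$, Young's inequality gives an error $O(\sqrt\hbar)\|W_\hbar\|_{L^2}$. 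To obtain the full rate $\hbar$, I expand to second order. The first-order term is $\partial_{q_j}G_\hbar\ast(\cdot)$ (respectively $\partial_{p_j}G_\hbar$) times $\tfrac\hbar2$ applied to $[\partial P,W_\hbar]$. Integrating by parts moves the derivative onto $W_\hbar$, which is $O(\hbar)\|\nabla_{q,p}W_\hbar\|_{L^2}$. The Taylor remainder is $O(\hbar)\|\nabla^2A\|_\infty\|W_\hbar\|_{L^2}$. Both $\|W_\hbar\|_{L^2}$ and $\|\nabla_{q,p}W_\hbar\|_{L^2}$ are bounded uniformly in $\hbar$ on $[0,T]$ by the bound \eqref{eq:SpaceTimeGradientWigner} of Proposition \ref{lemma:GronwallLagrangeWigner}, with constants of the form $(1+F)e^{Ct(1+F)}$. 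Convolution with $G_\hbar$ is an $L^2$ contraction, so $\|G_\hbar\ast[P,W_\hbar]\|_{L^2}$ is handled by the commutator estimate \eqref{eq:PropagationCommutator} from the same proposition.

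It then remains to bound $\|\mathcal{W}\|_{L^2}$ by $\|\Phi\|_{H^2}$. For $\mathcal{V}$ this is immediate from \eqref{eq:HusimiExtraTermFinal}, with coefficients involving $\|\nabla A\|_\infty^2$ and $\|\nabla^2A\|_\infty$. For $\mathcal{Z}$ I would use the explicit formula
\[
\mathcal{Z}=\tfrac{i}{\sigma^2}\Bigl(-[\partial_tP,\Phi]-\bigl[P,\tfrac12(\{P,\Phi\}-\{\Phi,P\})\bigr]\Bigr),
\]
together with the bounds $|P|/\sigma\le C(1+\|A\|_\infty+\|V\|_\infty)$ and $\sigma\ge2$. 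This gives $\|\mathcal{Z}\|_{L^2}\le C(1+F)\|\Phi\|_{H^1}$, as in Lemma \ref{eq:PropertiesEstimatesZdelta}.

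The main obstacle I expect is that $P$ is unbounded in $p$, and $[P,\cdot]$ is linear in $p$. The cyclic-trace step must therefore be arranged so that $\mathcal{W}$, which may carry factors of $P$ through $\mathcal{Z}$, is always paired with something in $L^2$. I would pair in the form $\tr\bigl([\tilde W_\hbar,P]\mathcal{W}\bigr)$, where the growing factor sits inside the commutator with $W_\hbar$ that Proposition \ref{lemma:GronwallLagrangeWigner} controls. In $\mathcal{Z}$ the factor $P$ is compensated by $1/\sigma^2$, so $P\mathcal{Z}$ remains bounded by $\|\Phi\|_{H^1}$. The growth of $p$ inside the Gaussian-convolution commutator error is not a problem, because the difference $P(q,p)-P(q',p')$ only involves $p-p'$. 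Collecting the three steps gives the claimed bound $C\hbar(1+F)e^{Ct(1+F)}\|\Phi(t)\|_{H^2}$.
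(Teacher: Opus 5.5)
Your proposal is correct and follows the paper's proof essentially step for step. You use the Husimi representation and trace cyclicity to move $[P,\cdot]$ onto $\tilde W_\hbar$, and you split $[P,\tilde W_\hbar]$ into $G_\hbar\ast[P,W_\hbar]$, controlled by Proposition \ref{lemma:GronwallLagrangeWigner}, plus a convolution--commutator error handled by a second-order Taylor expansion with $z'G_\hbar(z')=-\tfrac{\hbar}{2}\nabla G_\hbar(z')$ and integration by parts; you then bound $\mathcal{Z}$ and $\mathcal{V}$ in $L^2$ by $\|\Phi\|_{H^2}$, which is what the paper does. The only differences are cosmetic: you bound $\tfrac{1}{\sigma^2}\partial_t P$ in $L^\infty$ instead of $L^6$, and your aside that $P\mathcal{Z}$ is bounded is true but unnecessary, since the pairing $\tr([\tilde W_\hbar,P](\mathcal{Z}+\mathcal{V}))$ only needs $\mathcal{Z}+\mathcal{V}\in L^2$.
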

In order to prove the lemma observe first that
\begin{equation}
    \Tr(R(t)\mathrm{Op}^T_{\hbar}[\Psi(t)]) = \iint \tilde{W}_{\hbar} (t,q,p)\Psi(t,q,p)dqdp,
\end{equation}
where $$\tilde{W}_{\hbar}(t,q,p) := (G_{\hbar} \ast_{q,p} W_{\hbar})(t,q,p)$$ is the Husimi transform of $W_{\hbar}$ (here, $G_{\hbar}$ is the Gaussian, cf. \eqref{eq:Gaussian}). Then we have (by the cyclicity of the trace)
\begin{align}
\Tr(R\mathrm{Op}^T_{\hbar}\left[\tfrac{1}{i}[P,\mathcal{Z}+\mathcal{V}]\right]) 
&= \iint \tr(\tilde{W}_{\hbar}\tfrac{1}{i}[P,\mathcal{Z}+\mathcal{V}])dqdp \nonumber\\
&= -\iint \tr(\tfrac{1}{i}[P,\tilde{W}_{\hbar}](\mathcal{Z}+\mathcal{V}))dqdp \nonumber\\
&= -\iint \tr(G_{\hbar}\ast\tfrac{1}{i}[P,W_{\hbar}](\mathcal{Z}+\mathcal{V}))dqdp \nonumber\\ &\qquad - \iint \tr((\tfrac{1}{i}[P,G_{\hbar}\ast W_{\hbar}]-G_{\hbar}\ast\tfrac{1}{i}[P,W_{\hbar}])(\mathcal{Z}+\mathcal{V}))dqdp\nonumber\\
&=: H_1 + H_2, \nonumber
\end{align}
where
\begin{align}
\label{eq:LagrangeMultiplierH1}
    H_1 &:= -\iint \tr(G_{\hbar}\ast\tfrac{1}{i}[P,W_{\hbar}](\mathcal{Z}+\mathcal{V}))dqdp, \\
    \label{eq:LagrangeMultiplierH2}
    H_2 &:=- \iint \tr((\tfrac{1}{i}[P,G_{\hbar}\ast W_{\hbar}]-G_{\hbar}\ast\tfrac{1}{i}[P,W_{\hbar}])(\mathcal{Z}+\mathcal{V}))dqdp.
\end{align}

At this point, in order to gauge the convergence rate of $H_1$ and $H_2$ we have to understand the convergence of $\tfrac{1}{i}[P,W_{\hbar}]$. Note that a simple estimate of $\tfrac{1}{i}[P,W_{\hbar}]$ in $L^2$ by $2\|P\|_{L^{\infty}}\|W_{\hbar}\|_{L^2}$ is not enough for two reasons. First, this estimate does not yield a rate of convergence in $\hbar$. Second, $P\in L^{\infty}$ is a strong assumption that is not satisfied by the Dirac Hamiltonian.

By using the Wigner equation we have
\begin{align*}
\tfrac{1}{i}[P,W_{\hbar}] = \hbar\left(\partial_t W_{\hbar} - \tfrac{1}{2}\left( \{P,W_{\hbar}\} - \{W_{\hbar},P\}\right) - r_{\hbar}\right),
\end{align*}
where $r_{\hbar}$ is the remainder given by \eqref{eq:Remainder}. This suggests that if one controls the quantities
\begin{align*}
\partial_t W_{\hbar}, \qquad  \nabla_q W_{\hbar}, \qquad  \nabla_p W_{\hbar}, \qquad \text{and} \qquad r_{\hbar},
\end{align*}
in $L^2$, one obtains an estimate of $\tfrac{1}{i}[P,W_{\hbar}]$ such that $\tfrac{1}{i}[P,W_{\hbar}] = O(\hbar)_{L^2}$.
\begin{proposition}
\label{lemma:GronwallLagrangeWigner} Let $A,V \in W^{2,\infty}(\mathbb{R}_t\times \mathbb{R}^d_{q})$ be external electromagnetic potentials.
    Suppose that the initial Wigner transform satisfies
    \begin{align}
    \label{eq:AssumptionInitialDerivativesWigner}
        \|W^{\mathrm{in}}_{\hbar}\|_{H^1_{q,p}} + \|\partial_t W|_{t=0}\|_{L^2_{q,p}} \leq C.
    \end{align}
    Then we have the following propagation of the derivatives of the Wigner transform $W_{\hbar}$:
    \begin{equation}\label{eq:SpaceTimeGradientWigner}\|\nabla_{t,q,p}W_{\hbar}(t)\|_{L^2_{q,p}} \leq C\exp(Ct(1+F(\|A\|_{W^{2,\infty}_{t,q}},\|V\|_{W^{2,\infty}_{t,q}}))).
    \end{equation}
    where $F$ is a continuous function vanishing at the origin.
    
    In particular, the commutator $[P,W_{\hbar}]$ is propagated as follows:
    \begin{align}
    \|[P(t),W_{\hbar}(t)]\|_{L^2} &\leq \hbar\left[C(1+F(\|A\|_{W^{2,\infty}_{t,q}},\|V\|_{W^{2,\infty}_{t,q}})) \exp(Ct(1+F(\|A\|_{W^{2,\infty}_{t,q}},\|V\|_{W^{2,\infty}_{t,q}})))\right]. \label{eq:PropagationCommutatorLemma}
\end{align}
\begin{remark}\label{Remark:Preconditioning}
    Let us explain the main idea behind the proof. The problem has the following structure:
    \begin{equation}
    \label{eq:ModelProblem}
        \partial_t U = LU := \tfrac{1}{\varepsilon}f(U) +BU
    \end{equation}
    where $B$ and $f$ are skew-adjoint, e.g. $\frac{1}{i}[P,\cdot]$). Then one clearly has an energy estimate for $U$,
    \begin{align*}
        \tfrac{1}{2}\partial_t \|U\|^2 &=0.
    \end{align*}
On the other hand, if $d$ is another derivation (such as $\partial_t$, $\partial_{q_j}$ and $\partial_{p_j}$) then the energy estimate fails for $dU$, since
\begin{align*}
        \tfrac{1}{2}\partial_t \|dU\|^2 &= \tfrac{1}{\varepsilon}\langle dU, [d,f](U) \rangle + \tfrac{1}{\varepsilon}\langle dU, f(dU) \rangle + \langle dU,BdU \rangle + \langle dU, [d,B] U\rangle \\
        &= \tfrac{1}{\varepsilon}\langle dU, [d,f](U) \rangle + \langle dU, [d,B] U\rangle.
    \end{align*}
    The second term on the right-hand side is not problematic as it involves no oscillations in $\varepsilon$ and only first order derivatives of $U$. On the other hand, the first term carries a factor $\frac{1}{\varepsilon}$ and thus it is not possible to obtain a uniform estimate in $\varepsilon$. To circumvent this obstacle, we precondition the equation by introducing another derivation $\tilde{d}$ such that
    \begin{equation}
        \|dU\| \leq C \|\tilde{d}U\|.
    \end{equation}
    Moreover,
    \begin{align*}
        \partial_t \tilde{d} U &= \tilde{d} \partial_t U + [\partial_t,\tilde{d}]U \\
        &= \tilde{d}\left(\tfrac{1}{\varepsilon}f(U) +BU\right) + [\partial_t,\tilde{d}]U \\
        &= \tfrac{1}{\varepsilon}f(\tilde{d} U) + \tfrac{1}{\varepsilon} [\tilde{d},f](U)+B\tilde{d}U + [\tilde{d},B]U + [\partial_t,\tilde{d}]U. 
    \end{align*}
    Thus, if we impose that $\tilde{d}$ is such that
    \begin{equation}
    \label{eq:ConditionDerivationGeneral}
        [\tilde{d},f] = k_df
    \end{equation}
    – where $k_d$ is some function that depends on the choice of the derivation $d$ –  we can use equation \eqref{eq:ModelProblem} to obtain
    \begin{align*}
        \partial_t \tilde{d} U
        &= \tfrac{1}{\varepsilon}f(\tilde{d} U) + \tfrac{1}{\varepsilon} k_df(U)+B\tilde{d}U + [\tilde{d},B]U + [\partial_t,\tilde{d}]U \\
         &= \tfrac{1}{\varepsilon}f(\tilde{d} U) + k_d\left(\partial_t U - BU\right)+B\tilde{d}U + [\tilde{d},B]U + [\partial_t,\tilde{d}]U \\ 
         &= L\tilde{d} U + k_d\left(\partial_t U - BU\right) + [\tilde{d},B-\partial_t]U.
    \end{align*}
    This means that we removed the oscillations in $\varepsilon$. Since $L$ is skew-adjoint and the remaining terms depend linearly on derivatives of $U$ we can now perform an energy estimate for $\tilde{d}U$ that is independent of $\varepsilon$. Indeed,
    \begin{align}
    \label{eq:EnergyEstimateCovariant}
        \tfrac{1}{2}\partial_t \|\tilde{d} U\|^2
        \leq  \langle \tilde{d}U, k_d\left(\partial_t U - BU\right) + [\tilde{d},B-\partial_t]U\rangle . 
    \end{align}
It therefore remains to determine $\tilde{d}$.

For the Dirac equation we have $f = \tfrac{1}{i}[P,\cdot]$. Since $\tilde{d}$ is a derivation we have, for some matrix-valued function $M$, 
\begin{align*}
    \left[\tilde{d}, \tfrac{1}{i}[P,\cdot]\right] M= \tilde{d} \tfrac{1}{i}[P,M] - \tfrac{1}{i}[P,\tilde{d}M] = \tfrac{1}{i}[\tilde{d}P,M].
\end{align*}
Since this should be equal to $k_d\frac{1}{i}[P,M]$, condition \eqref{eq:ConditionDerivationGeneral} becomes
\begin{equation}
    \label{eq:ConditionDerivationDirac}\tilde{d}P = k_dP.
\end{equation}
Now for the Dirac equation $P = \lambda_0 Q + VI_4$ with 
\begin{equation}
\label{eq:Lambda0}
\lambda_0 := \sqrt{1+|\xi-A|^2}    
\end{equation} and $Q:= \Pi_+-\Pi_-$ where $Q^2 = I_4$. In fact, it suffices to consider only $P_0 := \lambda_0 Q$ since the diagonal part $VI_4$ doesn't play any role. Since we impose $\|dU\| \leq C \|\tilde{d}U\|$, a possible choice for $\tilde{d}$ is
\begin{equation}
    \tilde{d} = d + [C_d,\cdot],
\end{equation}where $C_d$ is some matrix depending on the choice of the derivation $d$. Using this ansatz we obtain, using that $\lambda_0$ is scalar,
\begin{align*}
   \tilde{d}P_0 &= (d\lambda_0)Q +\lambda_0 dQ + [C_d,P_0] \\
   &= k_d P_0 +\lambda_0 dQ + \lambda_0 [C_d,Q]
\end{align*}
with
\begin{equation}
    k_d:= \frac{{d}\lambda_0}{\lambda_0} 
\end{equation}
Since $\lambda_0 \neq 0$ and in order to satisfy \eqref{eq:ConditionDerivationDirac}, the matrix $C_d$ must satisfy
\begin{equation*}
    dQ = [Q,C_d].
\end{equation*}
Using $Q^2 = I_4$ we find $dQ = - Q(dQ)Q$ and thus
\begin{align*}
    dQ = QC_d -C_d Q = \tfrac{1}{2}(dQ- Q(dQ)Q).
\end{align*}
This clearly implies
\begin{equation*}
    C_d = \tfrac{1}{2}QdQ.
\end{equation*}
We finally obtain
\begin{equation}
    \tilde{d} = d + [\tfrac{1}{2}QdQ,\cdot].
\end{equation}
Moreover,
\begin{equation}
    \|d U\| \leq \|\tilde{d}U\| + |Q||dQ| \|U\|.
\end{equation}
By the energy estimate for $\|U\|$ and the fact that $Q$ and its derivatives are bounded (this will involve derivatives of the fields) we obtain that $\|dU\|$ remains bounded if $\|\tilde{d}U\|$ is. The latter is obtained from the energy estimate for $\tilde{d}U$ that does not involve any oscillations in $\varepsilon$.
\end{remark}
\begin{proof}[Proof of Proposition \ref{lemma:GronwallLagrangeWigner}]
Following Remark \ref{Remark:Preconditioning}, we consider the covariant derivatives
\begin{align}
    \begin{cases}
        \tilde{\partial}_t := \partial_t + [\tfrac{1}{2}Q \partial_t Q,\cdot], \\
        \tilde{\partial}_{q_j} := \partial_{q_j} + [\tfrac{1}{2}Q \partial_{q_j} Q,\cdot], \\
        \tilde{\partial}_{p_j} := \partial_{p_j} + [\tfrac{1}{2}Q \partial_{p_j} Q,\cdot],
    \end{cases}
\end{align}
where 
\begin{equation*} 
Q := \Pi_+-\Pi_- = \frac{P_0}{\langle p-A \rangle}, \quad P_0 := \alpha \cdot(p-A) + \beta.
\end{equation*}
Throughout the proof, 
\begin{equation*}
    d \quad \text{designates either} \quad \partial_t, \, \partial_{q_j}, \, \text{ or } \, \partial_{p_j}
\end{equation*} and 
\begin{equation*}
    \tilde{d} \quad \text{designates either} \quad \tilde{\partial}_t, \, \tilde{\partial}_{q_j}, \, \text{ or } \, \tilde{\partial}_{p_j}.
\end{equation*}
Next, define
\begin{equation}
    \label{eq:OperatorB}B W_{\hbar} := \tfrac{1}{2}(\{P,W_{\hbar}\}-\{W_{\hbar},P\}) +r_{\hbar}[W_{\hbar}],
\end{equation}
where $r_{\hbar}$ is given by
\begin{align}
r_{\hbar}[W_{\hbar}] &= \hbar \Theta[A_k][\alpha_k,W^{\hbar}] +(-\nabla_x A_k\cdot \nabla_{\xi}-\theta[A_k])\tfrac{1}{2}[\alpha_k, W^{\hbar}]_+   +(-\nabla V \cdot \nabla_{\xi} -\theta [V])W^{\hbar} \\
&=: r_{\hbar}^1+r_{\hbar}^2+r_{\hbar}^3, \nonumber
\end{align}
cf. Section \ref{sec:Remainder}, equation \eqref{eq:Remainder}.
Then, following \eqref{eq:EnergyEstimateCovariant} with 
\begin{equation*}
    U = W_{\hbar}, \quad k_d = \frac{d \lambda_0}{\lambda_0} \quad \text{and} \quad d = \partial_t,\partial_{q_j},\partial_{p_j},
\end{equation*}
the energy estimates for $\tilde{\partial}_t W_{\hbar}$,  $\tilde{\partial}_{q_j} W_{\hbar}$ and $\tilde{\partial}_{p_j} W_{\hbar}$ read
 \begin{align*}
        \tfrac{1}{2}\partial_t \|\tilde{\partial}_t W_{\hbar}\|_{L^2_{q,p}}^2
        &\leq  
        \iint \tr(\tilde{\partial}_t W_{\hbar}\left[ \tfrac{\partial_t \lambda_0}{\lambda_0}\left(\tilde{\partial}_t W_{\hbar} - [\tfrac{1}{2}Q\partial_t Q,W_{\hbar}] - B W_{\hbar}\right) + [\tilde{\partial}_t ,B-\partial_t]W_{\hbar} \right]) dq dp \\
        &\leq \|\tfrac{\partial_t \lambda_0}{\lambda_0}\|_{L^{\infty}} \|\tilde{\partial}_t W_{\hbar}\|_{L^2_{q,p}}^2 \\
        &\qquad + \|\tilde{\partial}_t W_{\hbar}\|_{L^2_{q,p}}\left[ \|\tfrac{\partial_t \lambda_0}{\lambda_0}\|_{L^{\infty}}\left(\|Q\partial_t Q\|_{L^{\infty}} \|W_{\hbar}\|_{L^2_{q,p}}+ \|B W_{\hbar}\|_{L^2_{q,p}}\right) + \|[\tilde{\partial}_t ,B-\partial_t]W_{\hbar}\|_{L^2_{q,p}}\right],
    \end{align*}
while
 \begin{align*}
        \tfrac{1}{2}\partial_t \|\tilde{\partial}_{q_j} W_{\hbar}\|_{L^2_{q,p}}^2
        &\leq 
        \iint \tr(\tilde{\partial}_{q_j} W_{\hbar}\left[ \tfrac{\partial_{q_j} \lambda_0}{\lambda_0}\left(\tilde{\partial}_t W_{\hbar} - [\tfrac{1}{2}Q\partial_t Q,W_{\hbar}] - B W_{\hbar}\right) + [\tilde{\partial}_{q_j} ,B-\partial_t]W_{\hbar}\right]) dq dp \\
        &\leq \|\tilde{\partial}_{q_j} W_{\hbar}\|_{L^2_{q,p}} \Big[ \|\tfrac{\partial_{q_j} \lambda_0}{\lambda_0}\|_{L^{\infty}}\left(   \|\tilde{\partial}_t W_{\hbar}\|_{L^2_{q,p}}+ \|Q\partial_t Q\|_{L^{\infty}}\|W_{\hbar}\|_{L^2_{q,p}} + \|B W_{\hbar}\|_{L^2_{q,p}}\right) \\ &\qquad \qquad \qquad + \|[\tilde{\partial}_{q_j} ,B-\partial_t]W_{\hbar}\|_{L^2_{q,p}}\Big],
    \end{align*}
and
 \begin{align*}
        \tfrac{1}{2}\partial_t \|\tilde{\partial}_{p_j} W_{\hbar}\|_{L^2_{q,p}}^2
        &\leq 
        \iint \tr(\tilde{\partial}_{p_j} W_{\hbar}\left[ \tfrac{\partial_{p_j} \lambda_0}{\lambda_0}\left(\tilde{\partial}_t W_{\hbar} - [\tfrac{1}{2}Q\partial_t Q,W_{\hbar}] - B W_{\hbar}\right) + [\tilde{\partial}_{p_j} ,B-\partial_t]W_{\hbar}\right]) dq dp \\
        &\leq \|\tilde{\partial}_{p_j} W_{\hbar}\|_{L^2_{q,p}} \Big[ \|\tfrac{\partial_{p_j} \lambda_0}{\lambda_0}\|_{L^{\infty}}\left(   \|\tilde{\partial}_t W_{\hbar}\|_{L^2_{q,p}}+ \|Q\partial_t Q\|_{L^{\infty}}\|W_{\hbar}\|_{L^2_{q,p}} + \|B W_{\hbar}\|_{L^2_{q,p}}\right) \\ &\qquad \qquad \qquad + \|[\tilde{\partial}_{p_j} ,B-\partial_t]W_{\hbar}\|_{L^2_{q,p}}\Big].
    \end{align*}
    We now estimate the terms on the right-hand sides of the energy estimates one-by-one.\\
    
    \underline{The terms $\frac{d\lambda_0}{\lambda_0}$:}
    From the definition \eqref{eq:Lambda0} of $\lambda_0$ we observe that
    \begin{equation*}
        \|\tfrac{d\lambda_0}{\lambda_0} \|_{L^{\infty}_{t,q}} \leq C\|dA\|_{L^{\infty}_{t,q}} \quad \text{for }d = \partial_t,\partial_{q_j} \quad \text{and} \quad \|\tfrac{d\lambda_0}{\lambda_0} \|_{L^{\infty}_{t,q}} \leq C \quad \text{for }d = \partial_{p_j}.
    \end{equation*} \\

    \underline{The terms $QdQ$:}
    Moreover,
    \begin{equation*}
        Q d Q = \frac{P_0 dP_0}{\lambda_0^2} + \frac{P_0^2}{\lambda_0}d\left(\frac{1}{\lambda_0}\right) = \frac{P_0 dP_0}{\lambda_0^2} - \frac{d\lambda_0}{\lambda_0}.
    \end{equation*}
    Therefore,
    \begin{align*}
        \|Q d Q\|_{L^{\infty}_{t,q}} &\leq C (1+\|dA\|_{L^{\infty}_{t,q}}) \\
        &\leq C(1+F(\|A\|_{W^{1,\infty}_{t,q}})) \qquad \text{ for }d = \partial_t,\partial_{q_j}
    \end{align*}
    and
    \begin{equation*}
        \|Q d Q\|_{L^{\infty}_{t,q}} \leq C
        \qquad \text{ for }d = \partial_{p_j}.
    \end{equation*} \\

    \underline{The term $B W_{\hbar}$:}
Recall the definition \eqref{eq:OperatorB} of  $B W_{\hbar}$.
We have the bound on the Poisson bracket
\begin{align}
    \|\tfrac{1}{2}\{P,W_{\hbar}\}-\{W_{\hbar},P\}\|_{L^2_{q,p}} &\leq \|\nabla_{q,p} P\|_{L^{\infty}_{q,p}} \|\nabla_{q,p}W_{\hbar}\|_{L^2_{q,p}} \nonumber \\ &\leq \left(1+\|\nabla_{q}A\|_{L^{\infty}_{q}} + \|\nabla_{q}V\|_{L^{\infty}_{q}}\right)  \|\nabla_{q,p}W_{\hbar}\|_{L^2_{q,p}} \nonumber \\ &\leq \left(1+F(\|A\|_{W^{1,\infty}_{q}}, \|V\|_{W^{1,\infty}_{q}})\right)  \|W_{\hbar}\|_{H^1_{q,p}} \label{eq:EstimatePoissonL2}
\end{align}
Next we estimate the remainder $r_{\hbar}$ in $L^2$. \\

\underline{The term $r_{\hbar}^1$:} We have  \begin{align*}
    (\Theta[F]f)(x,\xi) &= -\frac{1}{(2\pi)^d}\int e^{i(\xi-\eta)\cdot y} \Delta[F](x,y)|y|^2 f(x,\eta)d y d\eta,
\end{align*}
where
\begin{equation*}
     \Delta[F](x,y) = i\frac{F\left(x+\frac{\hbar y}{2}\right)-2F(x)+F\left(x-\frac{\hbar y}{2}\right)}{2\hbar^2|y|^2}. 
\end{equation*}
Now,
\begin{align*}
     \frac{F(x+\frac{\hbar y}{2})-2F(x)+F(x-\frac{\hbar y}{2})}{\hbar|y|} = \frac{F(x+\frac{\hbar y}{2})-F(x)}{\hbar|y|} + \frac{F(x-\frac{\hbar y}{2})-F(x)}{\hbar|y|}. 
\end{align*}
Thus,
\begin{align*}
    \hbar(\Theta[F]f)(x,\xi) &= \frac{1}{(2\pi)^d}\int e^{i(\xi-\eta)\cdot y} \frac{F(x+\frac{\hbar y}{2})-F(x)}{2i\hbar|y|}\frac{y\cdot y}{|y|} f(x,\eta)d y d\eta \\
    &\qquad +\frac{1}{(2\pi)^d}\int e^{i(\xi-\eta)\cdot y} \frac{F(x-\frac{\hbar y}{2})-F(x)}{2i\hbar|y|}\frac{y\cdot y}{|y|} f(x,\eta)d y d\eta \\
    &= \frac{1}{(2\pi)^d}\int e^{i(\xi-\eta)\cdot y} \frac{F(x+\frac{\hbar y}{2})-F(x)}{2i\hbar|y|}\frac{y}{|y|}\cdot (i\nabla_{\eta} f(x,\eta)d y d\eta \\
    &\qquad +\frac{1}{(2\pi)^d}\int e^{i(\xi-\eta)\cdot y} \frac{F(x-\frac{\hbar y}{2})-F(x)}{2i\hbar|y|}\frac{y}{|y|}\cdot (i\nabla_{\eta} f(x,\eta)d y d\eta.
\end{align*}
Now use the inequality
\begin{equation}
\label{eq:ConvolutionHölder}
    \|f\ast g\|_{L^2} \leq \|\mathcal{F}f\|_{L^\infty}\|g\|_{L^2},
\end{equation}
and 
\begin{equation}
    \left|\frac{F(x\pm\frac{\hbar y}{2})-F(x)}{2i\hbar|y|}\right| \leq \mathrm{Lip}(F),
\end{equation}
to conclude that
\begin{equation}
    \|r_{\hbar}^1\|_{L^2} \leq \left\|\hbar \Theta[A_k][\alpha_k,W_{\hbar}]\right\|_{L^2} \lesssim 2 \mathrm{Lip}(A) \|\nabla_{\xi}W_{\hbar}\|_{L^2}.
\end{equation}

\underline{The term $r_2^{\hbar}$:} We have
$$r_2^{\hbar} = \frac{1}{2}(-\nabla_x A_k\cdot \nabla_{\xi}-\theta[A_k])[\alpha_k,W_{\hbar}]_+.$$
Again, since $r_{\hbar}$ needs to be only $O(1)_{L^2}$ we do not have to extract any power of $\hbar$ here. Therefore we can simply estimate
\begin{align*}
    \|r_2^{\hbar}\|_{L^2} &\leq \tfrac{1}{2}\left(\|\nabla_x A_k\cdot \nabla_{\xi}W_{\hbar}\|_{L^2} + \|\theta[A_k][\alpha_k,W_{\hbar}]_+\|_{L^2}\right) \\ &\leq
    \tfrac{1}{2}\left(\|\nabla A\|_{L^{\infty}} + \mathrm{Lip}(A)\right)\|\nabla_{\xi}W_{\hbar}\|_{L^2}.
\end{align*}
In the last step we used \eqref{eq:ConvolutionHölder}. \\

\underline{The term $r_3^{\hbar}$:} This term is completely analogous to $r_2^{\hbar}$. We obtain
\begin{align*}
    \|r_3^{\hbar}\|_{L^2} &\leq
    \tfrac{1}{2}\left(\|\nabla V\|_{L^{\infty}} + \mathrm{Lip}(V)\right)\|\nabla_{\xi}W_{\hbar}\|_{L^2}.
\end{align*}

Therefore,
\begin{equation}
\label{eq:EstimateRemainderl2}
    \|r_{\hbar}\|_{L^2} \leq C\left(\mathrm{Lip}(A)+\mathrm{Lip}(V)\right)\|\nabla_{\xi}W_{\hbar}\|_{L^2}.
\end{equation} 

Combining \eqref{eq:EstimatePoissonL2} and \eqref{eq:EstimateRemainderl2} yields
\begin{align}
    \|B W_{\hbar}\|_{L^2_{q,p}} \leq C (1+\|\nabla A\|_{L^{\infty}_q}+\|\nabla V\|_{L^{\infty}_q})\|\nabla_{q,p}W_{\hbar}\|_{L^2_{q,p}} \nonumber \\
    \leq C (1+F( \|A\|_{W^{1,\infty}_q},\| V\|_{W^{1,\infty}_q}))\|W_{\hbar}\|_{H^1_{q,p}} \label{eq:EstimateBWL2}
\end{align}

\underline{The terms $[\tilde{d},B-\partial_t]W_{\hbar}$:}
We have
\begin{align*}
    [\tilde{d},B-\partial_t]W_{\hbar} &= [{d}+[\tfrac{1}{2}QdQ,\cdot],B-\partial_t]W_{\hbar} \\
    &= [d,B]W_{\hbar} + [[\tfrac{1}{2}QdQ,\cdot],B]W_{\hbar} - [[\tfrac{1}{2}QdQ,\cdot],\partial_t]W_{\hbar} \\
    &= B_dW_{\hbar} + [[\tfrac{1}{2}QdQ,\cdot],B]W_{\hbar} - [[\tfrac{1}{2}QdQ,\cdot],\partial_t]W_{\hbar} \\
    &= B_dW_{\hbar} + [\tfrac{1}{2}QdQ,BW_{\hbar}] - B[\tfrac{1}{2}QdQ,W_{\hbar}] \\ &\qquad \qquad - [\tfrac{1}{2}QdQ,\partial_t W_{\hbar}] + \partial_t[\tfrac{1}{2}QdQ,W_{\hbar}] \\
    &= B_dW_{\hbar} + [\tfrac{1}{2}QdQ,BW_{\hbar}] - B[\tfrac{1}{2}QdQ,W_{\hbar}] \\ &\qquad \qquad + [\tfrac{1}{2}\partial_t(QdQ),W_{\hbar}].
\end{align*}
Here,
\begin{equation}
    B_{{d}}W_{\hbar} = \tfrac{1}{2}\left(\{{d}P,W_{\hbar}\}-\{W_{\hbar},{d}P\}\right) + {r}^{\hbar}_d,
\end{equation}
where ${r}^{\hbar}_d$ is defined as
\begin{align}
{r}^{\hbar}_d &:= \hbar \Theta[{d}A_k][\alpha_k,W^{\hbar}] +(-\nabla_x {d}A_k\cdot \nabla_{\xi}-\theta[{d}A_k])\tfrac{1}{2}[\alpha_k, W^{\hbar}]_+   +(-\nabla {d}V \cdot \nabla_{\xi} -\theta [{d}V])W^{\hbar}.
\end{align}

Replacing $B$ by $B_{{\partial}_t}$, $B_{{\partial}_{q_j}}$ and $B_{{\partial}_{p_j}}$ thus only adds derivatives in $t$ and $q$ of the potentials. 
There are no additional derivatives of $W_{\hbar}$ involved. Therefore, similar to \eqref{eq:EstimateBWL2} with $A,V$ replaced by ${d}A,{d}V$ we have
\begin{align}
    \|B_{{d}}W_{\hbar}\|_{L^2_{q,p}} &\leq C\left(1+\|\nabla {d}A\|_{L^{\infty}_{q}} + \|\nabla {d}V\|_{L^{\infty}_{q}}\right) \|\nabla_{q,p}W_{\hbar}\|_{L^2_{q,p}} \nonumber \\
    &\leq C(1+F(\|A\|_{W^{2,\infty}_{q}}))\|W^{\hbar}\|_{H^1_{q,p}}
\end{align}
For the term $[\tfrac{1}{2}QdQ,BW_{\hbar}]$ we have 
\begin{align*}
    \|[\tfrac{1}{2}QdQ,BW_{\hbar}]\|_{L^2_{q,p}} &\leq \|QdQ\|_{L^{\infty}_{q,p}}\|BW_{\hbar}\|_{L^2_{q,p}} \\
    &\leq C (1+F(\|A\|_{W^{1,\infty}_{q}}))\|W^{\hbar}\|_{H^1_{q,p}}
\end{align*}
For the term $B[\tfrac{1}{2}QdQ,W_{\hbar}]$ we have
\begin{align*}
    \|B[\tfrac{1}{2}QdQ,W_{\hbar}]\|_{L^2_{q,p}} &\leq \|\tfrac{1}{2}(\{P,[\tfrac{1}{2}QdQ,W_{\hbar}]\}-\{[\tfrac{1}{2}QdQ,W_{\hbar}],P\})\|_{L^2_{q,p}} + \|r_{\hbar}[[\tfrac{1}{2}QdQ,W_{\hbar}]]\|_{L^2_{q,p}}\\
    &\leq C (1+F(\|A\|_{W^{2,\infty}_{t,q}},\|V\|_{W^{2,\infty}_{q}}))\|W^{\hbar}\|_{H^1_{q,p}}
\end{align*}
Finally,
\begin{align*}
    \|\partial_t Q dQ + Q\partial_t dQ\|_{L^{\infty}_{t,q}} &\leq C\left(1+ \|\partial_t Q\|_{L^{\infty}_{t,q}} \|dQ \|_{L^{\infty}_{t,q}} + \|Q\|_{L^{\infty}_{t,q}} \|\partial_t d Q\|_{L^{\infty}_{t,q}}\right) \\
    &\leq C \left(1+\|\partial_t A\|_{L^{\infty}_{t,q}}\|d A\|_{L^{\infty}_{t,q}} + \| A\|_{L^{\infty}_{t,q}} \|\partial_t dA\|_{L^{\infty}_{t,q}}\right) \\
    &= 1+F(\|A\|_{W^{2,\infty}_{t,q}}). 
\end{align*}
Thus, the energy estimates become
\begin{align*}
    \tfrac{1}{2}\partial_t\|\tilde{d}W_{\hbar}\|^2_{L^2_{q,p}} \leq C(1+F(\|A\|_{W^{2,\infty}_{t,q,p}},\|V\|_{W^{2,\infty}_{t,q,p}})) \|\tilde{d}W_{\hbar}\|_{L^2_{q,p}}^2
\end{align*}
and so
\begin{equation*}
    \|\tilde{d}W_{\hbar}(t)\|^2_{L^2_{q,p}} \leq \|\tilde{d}W_{\hbar}^{\mathrm{in}}\|^2_{L^2_{q,p}} \exp(Ct(1+F(\|A\|_{W^{2,\infty}_{t,q,p}},\|V\|_{W^{2,\infty}_{t,q,p}}))). 
\end{equation*}
This implies
\begin{equation*}
    \|dW_{\hbar}(t)\|_{L^2_{q,p}} \leq C \exp(Ct(1+F(\|A\|_{W^{2,\infty}_{t,q,p}},\|V\|_{W^{2,\infty}_{t,q,p}}))). 
\end{equation*}
since $\|dW_{\hbar}^{\mathrm{in}}\|^2_{L^2_{q,p}}\leq C$ by assumption \eqref{eq:AssumptionInitialDerivativesWigner}. This proves the propagation of derivatives of $W_{\hbar}$. 

For the bound on the commutator we use the Wigner equation and obtain
\begin{align*}
\tfrac{1}{i}[P,W_{\hbar}] = \hbar\left(\partial_t W_{\hbar} - \tfrac{1}{2}\left( \{P,W_{\hbar}\} - \{W_{\hbar},P\}\right) - r_{\hbar}\right),
\end{align*}
From this and the propagation of the derivatives we obtain the desired estimate.
\end{proof}
\end{proposition}


Now we are able to prove Lemma \ref{thm:EstimateLagrangeMultiplier}.
\begin{proof}[Proof of Lemma \ref{thm:EstimateLagrangeMultiplier}]
Recall $H_1$ and $H_2$, cf. \eqref{eq:LagrangeMultiplierH1} and \eqref{eq:LagrangeMultiplierH2}. 

\underline{The term $H_1$:} For $H_1$ we observe
\begin{align}
\label{eq:EstimateH1}
\begin{split}
    |H_1| &= \left|\iint \tr(G_{\hbar}\ast\tfrac{1}{i}[P,W_{\hbar}](\mathcal{Z}+\mathcal{V}))dqdp\right| \\
    &\leq \left\|G_{\hbar}\ast\tfrac{1}{i}[P,W_{\hbar}]\right\|_{L^2} \|\mathcal{Z}+\mathcal{V}\|_{L^2} \\
    &\leq \|G_{\hbar}\|_{L^1}\left\|[P,W_{\hbar}]\right\|_{L^2} \left(\|\mathcal{Z}\|_{L^2}+\|\mathcal{V}\|_{L^2}\right)  \\&\lesssim \left\|[P,W_{\hbar}]\right\|_{L^2} \left(\|\mathcal{Z}\|_{L^2}+\|\mathcal{V}\|_{L^2}\right). 
    \end{split}
\end{align}
\underline{The term $H_2$:} For $H_2$ we observe
\begin{align*}
    H_2 &= - \iint \tr((\tfrac{1}{i}[P,G_{\hbar}\ast W_{\hbar}]-G_{\hbar}\ast\tfrac{1}{i}[P,W_{\hbar}])(\mathcal{Z}+\mathcal{V}))dqdp \\ &= - \iiiint \tr((\tfrac{1}{i}[P(q,p)-P(q-q',p-p'), W_{\hbar}(q-q',p-p')]G_{\hbar}(q',p')(\mathcal{Z}+\mathcal{V})(q,p)))dqdpdq'dp'.
\end{align*}
Now
\begin{equation*}
  P(q,p)-P(q-q',p-p') = \alpha\cdot p' - \alpha \cdot (\nabla A(q) \cdot q')) + \nabla V(q) \cdot q' + O((\|\nabla^2A\|_{L^{\infty}}+\|\nabla^2V\|_{L^{\infty}})|q'|^2)  
\end{equation*}
In the commutator, the contribution of $V$ vanishes. Now let
\begin{equation*}
    a(q,p) := (\alpha,\alpha \cdot \nabla A(q)), \quad |a| \leq 1+\|\nabla A\|_{L^{\infty}}.
\end{equation*}
and denote $z =(q,p)$. Then
\begin{align*}
    H_2 &= -\iint \tr(\tfrac{1}{i}[a(z)\cdot z',W_{\hbar}(z-z')]G_{\hbar}(z')(\mathcal{Z}+\mathcal{V})(z))dzdz' \\ &\qquad \qquad  + \iint \tr(O(\|\nabla^2 A\|_{L^{\infty}}|z'|^2) W_{\hbar}(z-z') G_{\hbar}(z')(\mathcal{Z}+\mathcal{V})(z))dzdz' \\
    &= -\int \tr(\tfrac{1}{i}[a(z),W_{\hbar}]\ast_{z} (-\tfrac{\hbar}{2} \nabla G_{\hbar}(z'))(\mathcal{Z}+\mathcal{V})(z))dz \\ &\qquad \qquad  + \iint \tr(O(\|\nabla^2 A\|_{L^{\infty}}|z'|^2) W_{\hbar}(z-z') G_{\hbar}(z')(\mathcal{Z}+\mathcal{V})(z))dzdz' \\
    &= \frac{\hbar}{2}\int \tr(\tfrac{1}{i}[a(z),\nabla W_{\hbar}]\ast_{z} G_{\hbar}(z')(\mathcal{Z}+\mathcal{V})(z))dz \\ &\qquad \qquad  + \iint \tr(O(\|\nabla^2 A\|_{L^{\infty}}|z'|^2) W_{\hbar}(z-z') G_{\hbar}(z')(\mathcal{Z}+\mathcal{V})(z))dzdz'.
\end{align*}
Now we can estimate $H_2$ as follows
\begin{align}
    |H_2(t)|&\leq \hbar \left[1+\|\nabla A\|_{L^{\infty}}\|\nabla_{q,p}W_{\hbar}\|_{L^2_{q,p}} + \|\nabla^2 A\|_{L^{\infty}}\|W_{\hbar}\|_{L^2_{q,p}} \right]\left\|(\mathcal{Z}(t)+\mathcal{V}(t))\right\|_{L^2_{q,p}}.\label{eq:EstimateH2}
\end{align}
Combining the estimates for $H_1$ and $H_2$, i.e. \eqref{eq:EstimateH1} and \eqref{eq:EstimateH2} we obtain
\begin{align*}
    \left| \Tr(R(t)\mathrm{Op}^T_{\hbar}\left[\tfrac{1}{i}[P(t),\mathcal{Z}(t)+\mathcal{V}(t)]\right])\right| &\leq |H_1(t)|+|H_2(t)| \\ &\leq C\hbar (1+F(\|A\|_{W^{1,\infty}})\|W_{\hbar}\|_{H^1_{q,p}} \left\|(\mathcal{Z}(t)+\mathcal{V}(t))\right\|_{L^2} \\
    & \qquad + \left\|[P,W_{\hbar}]\right\|_{L^2}\left\|(\mathcal{Z}(t)+\mathcal{V}(t))\right\|_{L^2} .
\end{align*}
By Lemma \ref{eq:PropertiesEstimatesZdelta} below, we can control $\|\mathcal{Z}(t)\|_{L^2}$ as follows:
\begin{equation}
    \|\mathcal{Z}(t)\|_{L^2} \leq C\left(\|\tfrac{1}{\sigma^2} \partial_t P(t)\|_{L^6} + \|\tfrac{1}{\sigma}P(t)\|_{W^{1,\infty}}\right) \|\Phi(t)\|_{H^1_{q,p}}.
\end{equation}
By the formula \eqref{eq:HusimiExtraTermFinal} for the Husimi extra term we have
\begin{equation}
    \|\mathcal{V}(t)\|_{L^2} \leq C \left(1+\|\nabla A(t)\|_{L^{\infty}}^2 + \|\nabla^2 A(t)\|_{L^{\infty}}\right) \|\Phi(t)\|_{H^2_{q,p}}.
\end{equation}
Then we have the combined estimate
\begin{equation*}
    \|\mathcal{Z}(t)\|_{L^2}+\|\mathcal{V}(t)\|_{L^2} \leq C \left(1+ F\left(\| A(t)\|_{W^{2,\infty}_{t,q}},\| V(t)\|_{W^{1,\infty}_{t,q}}\right)\right)\|\Phi(t)\|_{H^2},
\end{equation*}
where $F$ is a continuous function of vanishing at the origin. 
Therefore we obtain, since $\|W_{\hbar}(t)\|_{L^2}\leq C$ by charge conservation and by the propagation estimate for the derivatives \eqref{eq:SpaceTimeGradientWigner} and the commutator \eqref{eq:PropagationCommutatorLemma},
\begin{align*}
    \left| \Tr(R(t)\mathrm{Op}^T_{\hbar}\left[\tfrac{1}{i}[P(t),\mathcal{Z}(t)+\mathcal{V}(t)]\right])\right| \leq C\left[\hbar (1+F)\exp(Ct(1+F))\|\Phi(t)\|_{H^2}\right]
\end{align*}
where
\begin{equation}
    F \equiv F(\|A\|_{W^{2,\infty}_{t,q}},\|V\|_{W^{2,\infty}_{t,q}}).
\end{equation}
This concludes the proof.
\end{proof}

\subsection{Gronwall estimate} Now we finalize the proof of the Gronwall estimate \eqref{eq:MainGronwallEstimate} from Theorem \ref{thm:main}. Recall the estimate for the pseudo-distance $\mathfrak{D}(t)$ \eqref{eq:GrönwallDistance}
\begin{align*}
    \mathfrak{D}(t) \leq \mathfrak{D}^{\mathrm{in}} &+ \sup_{\mathcal{A}} \int_0^t \left|\Tr(R(s)\left(\tfrac{i}{\hbar}[H(s),\mathrm{Op}^T_{\hbar}[\Phi(s)]]+\mathrm{Op}^T_{\hbar}\left[\tfrac{1}{2}\left(\{P(s),\Phi(s)\} - \{\Phi(s),P(s)\}\right)\right]\right))\right| ds \\ &\qquad + \sup_{\mathcal{A}}\int_0^t\left|\Tr(R(s)\mathrm{Op}^T_{\hbar}[\tfrac{1}{i}[P(s),\mathcal{Z}(s)]) \right|ds .
\end{align*}
Using Lemma \ref{lemma:ExpressionDistance} we obtain 
\begin{align*}
    \mathfrak{D}(t) &\leq \mathfrak{D}^{\mathrm{in}} +\sup_{\mathcal{A}}\int_0^t \left| \Tr(R(t)\mathrm{Op}^T_{\hbar}\left[\tfrac{1}{i}[P(t),\mathcal{Z}(t)+\mathcal{V}(t)]\right])\right|ds \\
 &\qquad \qquad  +  \sup_{\mathcal{A}}\hbar \int_0^t\left|\Tr({R}(s) \mathrm{Op}^T_{\hbar}\left[  \partial_{q_i}\partial_{q_j}\left(R^{ij}_k(s,x,q)\left[\alpha_k,\Phi(s)\right]\right)\right])\right|ds \\ &\qquad \qquad +\sup_{\mathcal{A}}\hbar \int_0^t\left|\Tr(R(s)\Delta[A_k(s)] \mathrm{Op}^T_{\hbar}\left[[\alpha_k,\Delta_p\Phi(s)]\right])\right|ds \\
    &\qquad \qquad + \sup_{\mathcal{A}}\int_0^t\left|\Tr(R(s)\left(\delta[A_k(s)]_j\mathrm{Op}^T_{\hbar}\left[\tfrac{1}{2}[\alpha_k,\partial_{p_j}\Phi(s)]_+\right]-\mathrm{Op}^T_{\hbar}\left[\partial_{q_j}A_k(s,q)\tfrac{1}{2}[\alpha_k,\partial_{p_j}\Phi(s)]_+\right]\right)) \right|ds\\
    &\qquad \qquad + \sup_{\mathcal{A}}\int_0^t \left|\Tr(R(s)\left(\delta[V(s)]_j\mathrm{Op}^T_{\hbar}\left[\partial_{p_j}\Phi(s)\right]-\mathrm{Op}^T_{\hbar}\left[\partial_{q_j}V(s,q)\partial_{p_j}\Phi(s)\right]\right))\right|ds. \\ & \qquad \qquad 
    +O(\hbar)\left(\|\nabla^4 A(t)\|_{L^{\infty}}\|\Phi(t)\|_{L^2} + \|\nabla^3 A(t)\|_{L^{\infty}}\|\nabla \Phi(t)\|_{L^2}\right).
  \end{align*}
Using Lemmas \ref{lemma:GronwallFirstOrder}, \ref{lemma:GronwallSecondOrder} and \ref{thm:EstimateLagrangeMultiplier} we then obtain the following estimate for $\mathfrak{D}(t)$,
\begin{align*} 
    \mathfrak{D}(t) & \leq \mathfrak{D}^{\mathrm{in}} +\sup_{\mathcal{A}} C\hbar (1+F) \int_0^t\left[e^{C(1+F)s}+ \|A(s)\|_{W^{4,\infty}_{t,q}}\right]\|\Phi(s)\|_{H^2_{q,p}} ds \\
    &\qquad \qquad + \sup_{\mathcal{A}} C\sqrt{\hbar} F\int_0^t  \|\Phi(s)\|_{H^1_{q,p}}ds 
\end{align*}
where $F\equiv F(\|A\|_{W^{2,\infty}_{t,q}},\|V\|_{W^{2,\infty}_{t,q}})$.
At this point we invoke the growth bound on the backward solution $\Phi$, i.e. \eqref{eq:GronwallTestBackward}:
\begin{equation*}
    \|\Phi(t)\|_{H^k}^2 \leq \|\Phi^{\mathrm{fin}}\|_{H^k}^2 e^{C_{k} (T-t)}, \quad \text{for } t\in [0,T],
\end{equation*}
where
\begin{equation*}
    C_{k} = 1+F(\|\nabla A\|_{W^{k,\infty}_{q}},\|\partial_t A\|_{W^{k-1,\infty}_{q}},\|\nabla V\|_{W^{k,\infty}_{q}}),
\end{equation*}
and where $F$ is a continuous function vanishing at the origin. Now we define
\begin{align*}
    \begin{cases}
    F_2 := F(\|A\|_{W^{2,\infty}_{t,q}},\|V\|_{W^{2,\infty}_{t,q}}) \\
        F_4 := F(\|A\|_{W^{4,\infty}_{t,q}},\|V\|_{W^{3,\infty}_{t,q}})
    \end{cases}
\end{align*}
so that, by taking the supremum over $\|\Phi^{\mathrm{fin}}\|_{H^k} \leq 1$,
\begin{align*} 
    \mathfrak{D}(t) &\leq \mathfrak{D}^{\mathrm{in}} + C\hbar (1+F_4) \left[\int_0^te^{C(1+F_4)s}e^{C_2(T-s)}ds + \int_0^t e^{C_2(T-s)}ds\right] \\
    &\qquad \qquad +  C\sqrt{\hbar} F_2\int_0^t  e^{C_1(T-s)}ds \\
    &= \mathfrak{D}^{\mathrm{in}} + C\hbar (1+F_4)e^{C_2T}\left[\int_0^te^{(C(1+F_4)-C_2)s}ds + \int_0^t e^{-C_2s}ds\right] \\
    &\qquad \qquad +  C\sqrt{\hbar} F_2e^{C_1T}\int_0^t  e^{-C_1s}ds \\
    &= \mathfrak{D}^{\mathrm{in}} + C\hbar {(1+F_4)e^{C_2T}}\left[\frac{1}{C(1+F_4)-C_2}\left(e^{(C(1+F_4)-C_2)t}-1\right) +\frac{1}{C_2}\left(1-e^{-C_2t}\right)\right] \\
    &\qquad \qquad +  C\sqrt{\hbar} \frac{F_2e^{C_1T}}{C_1}\left(1-e^{-C_1t}\right)
\end{align*}
Note that if the rates $C(1+F_4)$ and $C_2$ are equal, the first term inside the square brackets reduces to $t$.

Now we introduce constants $\Xi_2$ depending on $T$, $F_2$ and $C_1$ and $\Xi_4$ depending on $T$, $F_4$ and $C_2$ such that
\begin{align*}
    \mathfrak{D}(t) &\leq \mathfrak{D}^{\mathrm{in}} + \sqrt{\hbar}\, \Xi_2 e^{\Xi_2 t} + \hbar\, \Xi_4 e^{\Xi_4 t}, \quad t\in [0,T]. 
\end{align*}

This completes the proof of \eqref{eq:MainGronwallEstimate}. 

The $L^2$ estimate for the commutator $[P,W_{\hbar}]$ was already proved in Proposition \ref{lemma:GronwallLagrangeWigner}. Therefore the proof of Theorem \ref{thm:main} is complete.
\section{Wellposedness of the matrix Liouville equation} 
\label{sec:LWPMatrixLiouville}
In this section we prove Theorem \ref{thm:LWPLiouville}.
Recall the matrix Liouville equation
    \begin{align}
    \begin{cases}
\partial_t \Phi = \tfrac{1}{i}[P,\mathcal{Z}]+ \tfrac{1}{2} \left( \{P, \Phi \} - \{ \Phi, P \} \right), \qquad \Phi|_{t=0}  = \Phi^{\mathrm{in}}, \\
[P,\Phi] = 0,
\end{cases}
\end{align}
where
\begin{align}
    P(t,q,p) = \alpha \cdot(p-A(t,q))+ \beta +V(t,q).
\end{align}
We will use parabolic regularization (also known as vanishing viscosity) by addition of a parabolic term $\delta \Delta \Phi$ with $\delta>0$.

Formally,
\begin{equation}
    \partial_t \iint \tr(\Phi^{\ast}\Phi)dqdp = \partial_t \|\Phi\|^2_{L^2(\mathbb{R}^d\times \mathbb{R}^d,\mathbb{M}_m(\mathbb{C}))} = \text{const.}
    \label{eq:L2estimatePhi}
\end{equation}
This can be seen as follows. Assume that $\Phi=\Phi^{\ast}$ is self-adjoint. Then
\begin{align*}
    \iint \tr(\Phi \partial_t \Phi) dqp &= \iint \tr(\Phi(\tfrac{1}{2}(\{P,\Phi\}-\{\Phi,P\})+\tfrac{1}{i}[P,\mathcal{Z}]))dqdp \\&= \tfrac{1}{2}\iint \tr(\Phi(\{P,\Phi\}-\{\Phi,P\})) dqdp + \int \tr(\tfrac{1}{i}[P,\Phi]\mathcal{Z}) \\
    &= \tfrac{1}{2}\iint \tr(\Phi(\{P,\Phi\}-\{\Phi,P\})) dqdp \\
    &=0, 
\end{align*}
since $[P,\Phi]=0$. The remaining term on the right hand side is just the matrix-valued analogue of the scalar Poisson bracket $\{H,f\}$ and we omit the classic calculation that $\iint f\{H,f\}dx\xi = 0$ for the matrix case. 

The propagation of self-adjointness of $\Phi$ is easy to see once one has uniqueness of the solution as the adjoint equation is the same as the original equation, i.e.
\begin{align*}
\begin{cases}
    \partial_t\Phi^{\ast} = \tfrac{1}{2}\left(\{P,\Phi^{\ast}\}-\{\Phi^{\ast},P\}\right) + \tfrac{1}{i}[P,\mathcal{Z}[\Phi^{\ast}]], \qquad \Phi^{\ast}|_{t=0} = \Phi_{\mathrm{in}}^{\ast} \\
    [P,\Phi^{\ast}] = 0,
\end{cases}
\end{align*}
since $P = P^*$ and $\mathcal{Z}[\Phi^{\ast}] = (\mathcal{Z}[\Phi])^*$. Take self-adjoint initial data $\Phi_{\mathrm{in}}^{\ast} = \Phi_{\mathrm{in}}$ and conclude by uniqueness.

\subsection{Wellposedness for the regularized equation}
The regularized matrix Liouville equation (here $\delta >0$ is a real number) is given by:
\begin{align}
    \begin{cases}
\partial_t \Phi^{\delta} = \tfrac{1}{i}[P,\mathcal{Z}^{\delta}] + \tfrac{1}{2} \left( \{P, \Phi^{\delta} \} - \{ \Phi^{\delta}, P \} \right) +  \delta\Delta \Phi^{\delta}, \qquad \Phi^{\delta}|_{t=0}  = \Phi^{\mathrm{in}}, \\
[P,\Phi^{\delta}] = 0,
\end{cases}
\label{eq:regularizedLiouville}
\end{align}
where
\begin{align}
    P(t,q,p) = \alpha \cdot(p-A(t,q))+ \beta +V(t,q).
\end{align}
Note that $\Delta=\Delta_{q,p}$ should be understood as the Laplacian on $\mathbb{R}^d_q\times \mathbb{R}^d_{p}$, i.e.
\begin{equation}
    \Delta := \sum_{j=1}^d (\partial_{q_k}^2+\partial_{p_k}^2).
\end{equation}

Here, $$\mathcal{L}^{\delta} := \tfrac{1}{i}[P,\mathcal{Z}^{\delta}]$$
is the Lagrange multiplier associated to the regularized constraint $[P,\Phi^{\delta}]=0$. Compared to the unregularized Lagrange multiplier $\mathcal{Z}$, the regularized Lagrange multiplier $\mathcal{Z}^{\delta}$ contains terms corresponding to $\delta\Delta\Phi^{\delta}$, cf. Lemma \ref{eq:PropertiesEstimatesZdelta}.

We have the following local wellposedness result for the regularized equation \eqref{eq:regularizedLiouville}.
\begin{proposition}
    Let $\delta>0$ and $\Phi^{\mathrm{in}} \in H^s(\mathbb{R}^d_q\times \mathbb{R}^d_p,\mathbb{M}_m(\mathbb{C}))$ with $s\geq 2$. Let $A,V \in H^s(\mathbb{R}_t\times \mathbb{R}^d_q)$ with $s$ large enough. Then there exists a $T^{\delta}$, depending on $\|\Phi^{\mathrm{in}}\|_{H^s}$ and $\delta$, such that the regularized matrix Liouville equation \eqref{eq:regularizedLiouville} is locally well-posed in the space $C([0,T^{\delta}),H^s)$ with initial data $\Phi^{\mathrm{in}}$.
\end{proposition}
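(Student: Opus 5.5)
The plan is to recast the regularized system \eqref{eq:regularizedLiouville} as a semilinear parabolic equation for $\Phi^{\delta}$ alone, and then apply a Banach fixed point argument to the Duhamel formulation built on the heat semigroup $e^{\delta t\Delta_{q,p}}$.

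\textbf{Eliminating the multiplier.} I will first make the Lagrange multiplier an explicit function of $\Phi^{\delta}$. The argument leading to the formula for $\mathcal{Z}$ in the remark after Theorem \ref{thm:LWPLiouville} applies here: differentiate $[P,\Phi^{\delta}]=0$ in time, use Lemma \ref{lemma:CharacterizationSubspace} in the two-eigenvalue form $\mathcal{Z}^{\delta}=\sigma^{-2}[P,[P,\mathcal{Z}^{\delta}]]$, and insert the equation. This gives
\begin{equation*}
\mathcal{Z}^{\delta}=\tfrac{i}{\sigma^2}\Big(-[\partial_tP,\Phi^{\delta}]-\big[P,\tfrac12(\{P,\Phi^{\delta}\}-\{\Phi^{\delta},P\})+\delta\Delta\Phi^{\delta}\big]\Big).
\end{equation*}
Substituting, the equation becomes $\partial_t\Phi^{\delta}=\delta\Delta\Phi^{\delta}+\mathcal{N}(t)\Phi^{\delta}$. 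Here $\mathcal{N}(t)$ is a linear operator with coefficients built from $P$, $\partial_tP$, $\nabla P$ and the bounded, smooth projector $\sigma^{-2}[P,[P,\cdot]]$. The term $\mathcal{N}(t)$ is of second order, but only through the component $\sigma^{-2}[P,[P,\delta\Delta\Phi^{\delta}]]$. Note that $\Phi\mapsto\Phi-\sigma^{-2}[P,[P,\Phi]]$ is the projection $\Pi_+\Phi\Pi_++\Pi_-\Phi\Pi_-$ onto the commutant of $P$. Hence the full principal part is $\delta\,\mathcal{P}_{\mathrm{diag}}\Delta$, where $\mathcal{P}_{\mathrm{diag}}$ denotes this projection.

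\textbf{Setting up the fixed point.} I will therefore rewrite the equation in the following way. The unknown lives in the closed subspace $\{[P,\Phi]=0\}$. On that subspace, $\mathcal{P}_{\mathrm{diag}}\Delta\Phi=\Delta\Phi-(\text{first- and zeroth-order terms in }\Phi)$, because $\Delta\mathcal{P}_{\mathrm{diag}}\Phi-\mathcal{P}_{\mathrm{diag}}\Delta\Phi$ involves at most one derivative of $\Phi$ times derivatives of $\Pi_\pm$. Consequently $\delta\,\mathcal{P}_{\mathrm{diag}}\Delta=\delta\Delta+\delta\,\mathcal{R}_1$ on the constraint set, with $\mathcal{R}_1$ of order one.

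The equation then takes the form $\partial_t\Phi=\delta\Delta\Phi+\mathcal{M}(t)\Phi$, where $\mathcal{M}$ is a first-order operator with $W^{k,\infty}$ coefficients. These coefficients require $\Pi_\pm$, $\sigma^{-1}$ and their derivatives to be bounded; this holds since $\sigma=2\langle p-A\rangle\ge 2$ and $A,V\in H^s_{t,x}\subset W^{k,\infty}$ for $s$ large.

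The Duhamel map is
\begin{equation*}
\Gamma[\Phi](t)=e^{\delta t\Delta}\Phi^{\mathrm{in}}+\int_0^te^{\delta(t-s)\Delta}\mathcal{M}(s)\Phi(s)\,ds .
\end{equation*}
I will show it is a contraction on a ball of $C([0,T^{\delta}],H^s)$, using the smoothing bound $\|e^{\delta\tau\Delta}\nabla f\|_{H^s}\le C(\delta\tau)^{-1/2}\|f\|_{H^s}$. The resulting integral is $\int_0^t(\delta(t-s))^{-1/2}ds=2\sqrt{t/\delta}$, so choosing $T^{\delta}\sim\delta$ small yields a contraction. Since the problem is linear, the existence time does not in fact depend on $\|\Phi^{\mathrm{in}}\|_{H^s}$, and uniqueness and continuous dependence follow from the contraction.

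\textbf{Recovering the constraint.} The constraint is not imposed along the iteration, so it must be recovered for the fixed point. Set $K:=[P,\Phi^{\delta}]$. I will derive a linear parabolic equation for $K$ with $K(0)=[P,\Phi^{\mathrm{in}}]=0$, and conclude $K\equiv0$ by an $L^2$ energy estimate and Gronwall.

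This last step is the main obstacle, and the choice of reduction is made with it in mind. The correct route is to use the full formula for $\mathcal{Z}^{\delta}$ in the equation, not the reduced one, since that formula was designed so that $\partial_t[P,\Phi]=[\partial_tP,\Phi]+[P,\partial_t\Phi]$ vanishes identically once $[P,\Phi]=0$. With the full formula the principal part is $\delta\mathcal{P}_{\mathrm{diag}}\Delta$, which is degenerate on the off-diagonal blocks. To handle this, I will run the fixed point with the operator split into $\delta\Delta$ plus the correction $-\delta(1-\mathcal{P}_{\mathrm{diag}})\Delta$. The correction is then moved to the right-hand side and controlled by noting that $(1-\mathcal{P}_{\mathrm{diag}})\Delta\Phi$ depends only on $K$ and on first derivatives of $\Phi$. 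With that bookkeeping, the equation for $K$ is a closed linear system with bounded-order coefficients, and $K\equiv0$ follows.
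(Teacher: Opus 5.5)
Your overall route is the paper's: make the multiplier explicit, and use the constraint to turn the equation into $\partial_t\Phi=\delta\Delta\Phi+(\text{first-order terms})$. Then close a Duhamel contraction using the $(\delta\tau)^{-1/2}$ smoothing on $[0,T^\delta]$ with $T^\delta\lesssim\delta$, and recover $[P,\Phi]=0$ afterwards from a linear equation with zero data. Your first reduction, replacing $\delta\mathcal{P}_{\mathrm{diag}}\Delta$ by $\delta\Delta-\delta[\Delta,\mathcal{P}_{\mathrm{diag}}]$ on all of $H^s$, is a legitimate variant of the paper's substitution $[P,\Delta\Phi]\to-[\Delta P,\Phi]-2[\nabla P,\nabla\Phi]$ inside $\mathcal{Z}^\delta$.

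The gap is in your last paragraph, which abandons this reduction. If you keep the whole correction $-\delta(1-\mathcal{P}_{\mathrm{diag}})\Delta\Phi$ on the right-hand side, it is not first order off the constraint set. With $K=[P,\Phi]$ one has $(1-\mathcal{P}_{\mathrm{diag}})\Delta\Phi=\sigma^{-2}[P,\Delta K]-\sigma^{-2}[P,[\Delta P,\Phi]+2[\nabla P,\nabla\Phi]]$, and the first piece carries two derivatives of $\Phi$. The iterates of $\Gamma$ do not satisfy $K=0$, so this term only yields $\int_0^t\delta\,(\delta(t-s))^{-1}\,ds=\infty$, and the contraction breaks. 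Saying the term is ``controlled because it depends only on $K$'' presupposes $K=0$, which is exactly what you are trying to prove. You cannot both keep the full multiplier (which makes $\partial_tK$ a zeroth-order ODE in $K$) and have a fixed point that closes.

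The fix is to discard the $\sigma^{-2}[P,\Delta K]$ piece, which is precisely what your $\mathcal{R}_1$ and the paper's reduced formula both do. The price is that the constraint then propagates through a parabolic equation rather than ``identically''. Your claim that the reduced formula is the wrong route is therefore mistaken; off the constraint set the two formulations only differ in how $K$ evolves.

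For the paper's formula one computes $\partial_tK=\delta(1-\mathcal{P}_{\mathrm{diag}})\Delta K+\mathcal{P}_{\mathrm{diag}}[\partial_tP,\Phi]$. Since $K$ is off-diagonal, $\mathcal{P}_{\mathrm{diag}}\Delta K$ is first order in $K$. Since $\Pi_\pm(\partial_tP)\Pi_\pm=(\partial_t\lambda_\pm)\Pi_\pm$, the term $\mathcal{P}_{\mathrm{diag}}[\partial_tP,\Phi]$ depends only on $\Phi_{\mathrm{off}}:=(1-\mathcal{P}_{\mathrm{diag}})\Phi=\sigma^{-2}[P,K]$. For your $\mathcal{R}_1$-version, the same computation gives $\partial_t\Phi_{\mathrm{off}}=\delta\Delta\Phi_{\mathrm{off}}+(\text{first order in }\Phi_{\mathrm{off}})$.

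In both cases the energy estimate absorbs the first-order terms into the dissipation $-\delta\|\nabla\Phi_{\mathrm{off}}\|_{L^2}^2$ (or its analogue), and Gronwall gives zero. This is the paper's ``$\partial_tf=\delta\Delta f+$ terms linear in $f$''. Run the estimate on $\Phi_{\mathrm{off}}$ or $\sigma^{-1}K$ rather than on $K$ itself, since $P$ grows linearly in $p$ and $K$ need not lie in $L^2$.
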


The a priori estimate in $L^2$ \eqref{eq:L2estimatePhi} becomes
\begin{equation}
    \partial_t \|\Phi^{\delta}\|^2_{L^2(\mathbb{R}^d_q\times \mathbb{R}^d_p,\mathbb{M}_m(\mathbb{C}))} = -\delta \|\nabla \Phi^{\delta}\|^2_{L^2(\mathbb{R}^d_q\times \mathbb{R}^d_p,\mathbb{M}_m(\mathbb{C}))} \leq 0,
\end{equation}
i.e. $\|\Phi^{\delta}(t)\|^2_{L^2(\mathbb{R}^d_q\times \mathbb{R}^d_p,\mathbb{M}_m(\mathbb{C}))}$ is decreasing in time. We also need the decay properties of the heat semigroup, i.e.
\begin{equation}
    \|e^{\delta t\Delta}f\|_{W^{s,p}} \lesssim (\delta t)^{-\frac{s}{2}-\frac{d}{2}\left(\frac{1}{r}-\frac{1}{p}\right)}\|f\|_{L^r}, \qquad 1<r\leq 2,
    \label{eq:HeatDecayDimdSobolevp}
\end{equation}
cf. \cite{wang_harmonic_2011}. Here, we take $d=6$ and $p=2$. Then, 
\begin{equation}
    \|e^{\delta t\Delta}f\|_{H^{s}} \lesssim (\delta t)^{1-\frac{3}{r}}\|\langle D\rangle^{s-1} f\|_{L^r}.
    \label{eq:HeatDecayDim6Sobolev2}
\end{equation}
Here $\langle D \rangle^{s}$ stands for the Fourier multiplier with symbol $(1+|\xi|^2)^{s/2}$ where $\xi \in \mathbb{R}^d \times \mathbb{R}^d$. Note that in order for $t^{1-\frac{3}{r}}$ to be integrable, one needs to take $\frac{3}{2}<r$.

We shall also make frequent use of the fractional Leibniz rule or Kato-Ponce inequality \cite[Theorem 1]{grafakos_kato-ponce_2014}.

\begin{theorem}
    Let $\frac{1}{2} < r < \infty$, $1 < p_1, p_2, q_1, q_2 \leq \infty$ satisfy $\frac{1}{r} = \frac{1}{p_1} + \frac{1}{q_1} = \frac{1}{p_2} + \frac{1}{q_2}$. Given $s > \max\left(0, \frac{d}{r} - d\right)$ or $s \in 2\mathbb{N}$, there exists $C = C(d, s, r, p_1, q_1, p_2, q_2) < \infty$ such that for all $f, g \in \mathcal{S}(\mathbb{R}^d)$ we have
\begin{align}
  \| D^s (f g) \|_{L^r}
  & \leq C \left[ \| D^s f \|_{L^{p_1}} \| g \|_{L^{q_1}}
    + \| f \|_{L^{p_2}} \| D^s g \|_{L^{q_2}} \right], \label{eq:D_ineq} \\
  \| \langle D \rangle^s (f g) \|_{L^r}
  & \leq C \left[ \| f \|_{L^{p_1}} \| \langle D \rangle^s g \|_{L^{q_1}}
    + \| \langle D \rangle^s f \|_{L^{p_2}} \| g \|_{L^{q_2}} \right]. \label{eq:J_ineq}
\end{align}
\end{theorem}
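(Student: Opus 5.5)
The plan is to prove \eqref{eq:D_ineq} by a Littlewood--Paley paraproduct decomposition, and then to adapt it to the inhomogeneous version \eqref{eq:J_ineq}. Fix a smooth dyadic partition of unity $\sum_{j\in\mathbb{Z}}\psi(2^{-j}\xi)=1$ on $\mathbb{R}^d\setminus\{0\}$, with associated projections $\Delta_j$ and $S_j=\sum_{k\le j}\Delta_k$. I split the product (Bony decomposition) as
\begin{equation*}
fg=\sum_j S_{j-3}f\,\Delta_j g+\sum_j \Delta_j f\, S_{j-3}g+\sum_{|j-k|\le 2}\Delta_j f\,\Delta_k g=:\Pi_1+\Pi_2+\Pi_3 .
\end{equation*}
Each of the three pieces will be estimated in $L^r$ after applying $D^s$.

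\textbf{Paraproducts $\Pi_1$ and $\Pi_2$.} In $\Pi_1$ the $j$-th summand has Fourier support in an annulus of size $2^j$. Hence $D^s(S_{j-3}f\,\Delta_j g)=S_{j-3}f\cdot \widetilde\Delta_j D^s g$, up to a bilinear symbol of the form $|\xi+\eta|^s|\eta|^{-s}\widetilde\psi(2^{-j}\eta)\chi(2^{-j}\xi)$. This symbol is smooth and satisfies the Coifman--Meyer estimates uniformly in $j$. Summing over $j$, the resulting bilinear multiplier is a Coifman--Meyer operator applied to $(f,D^s g)$. The Coifman--Meyer theorem (valid for $1<p,q\le\infty$ and $\tfrac12<r<\infty$, including the case $p$ or $q=\infty$ through the $L^\infty\times L^q\to L^q$ bound) then gives $\|D^s\Pi_1\|_{L^r}\lesssim \|f\|_{L^{p_2}}\|D^sg\|_{L^{q_2}}$. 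The term $\Pi_2$ is symmetric and yields $\|D^sf\|_{L^{p_1}}\|g\|_{L^{q_1}}$. Neither term needs a restriction on $s$ beyond $s\ge0$.

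\textbf{High--high term $\Pi_3$ (main obstacle).} Here $\Delta_jf\,\Delta_kg$ has Fourier support only in a ball of radius $\sim2^j$, so $D^s$ cannot be moved onto one factor. I write
\begin{equation*}
D^s\Pi_3=\sum_{\ell}\Delta_\ell D^s\sum_{j\ge \ell-4}\Delta_jf\,\widetilde\Delta_jg ,
\end{equation*}
which gives a factor $2^{(\ell-j)s}$ relative to $2^{js}\Delta_jf\,\widetilde\Delta_jg$. For $r\ge1$ I use the square function characterization of $L^r$, Fefferman--Stein vector-valued maximal inequalities and H\"older's inequality, and sum the geometric series using $s>0$.

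For $r<1$ the triangle inequality fails. I therefore work in the Hardy space $H^r$, or equivalently use the $r$-triangle inequality $\|\sum F_\ell\|_{L^r}^r\le\sum\|F_\ell\|_{L^r}^r$. Combined with the Plancherel--P\'olya--Nikolskii inequality for functions with Fourier support in a ball of radius $2^j$, this costs a factor $2^{(j-\ell)d(1/r-1)}$. The series still converges exactly when $s>d/r-d$. This is the delicate step, and it is where the hypothesis $s>\max(0,d/r-d)$ enters. If $s\in2\mathbb{N}$, I bypass it: $D^s=(-\Delta)^{s/2}$ is a differential operator, and the classical Leibniz rule together with Gagliardo--Nirenberg interpolation of the intermediate derivatives reduces everything to the two endpoint terms.

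\textbf{Inhomogeneous version.} For \eqref{eq:J_ineq} I repeat the argument with an inhomogeneous Littlewood--Paley decomposition, i.e.\ $S_0$ replacing all $j\le0$. Then $\langle D\rangle^s$ acts on pieces of frequency $\lesssim1$ as a bounded multiplier. These low-frequency pieces are bounded by H\"older's inequality, $\|f\|_{L^{p}}\|g\|_{L^{q}}\le\|f\|_{L^p}\|\langle D\rangle^sg\|_{L^q}$, because $\langle D\rangle^{-s}$ is bounded on $L^q$ for $1<q\le\infty$. The high-frequency pieces are handled exactly as above, with $|\xi|^s$ replaced by $\langle\xi\rangle^s$, which satisfies the same symbol estimates for $|\xi|\gtrsim1$.
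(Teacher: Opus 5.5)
The paper does not prove this statement. It quotes it verbatim from Grafakos--Oh \cite{grafakos_kato-ponce_2014} and uses it as a black box in Section~\ref{sec:LWPMatrixLiouville}. Your architecture is the standard one for that result: Bony decomposition, Coifman--Meyer for the two paraproducts (including the $L^\infty$ endpoints), a Hardy-space square-function argument for the high--high part, and the Leibniz rule plus interpolation for $s\in2\mathbb{N}$. The paraproduct, even-integer and inhomogeneous steps are fine as sketched. For $s\in 2\mathbb{N}$, the top-order Leibniz terms are exactly $(D^sf)g$ and $fD^sg$. The Littlewood--Paley bound $|\partial^\alpha f|\lesssim (Mf)^{1-|\alpha|/s}(M(D^sf))^{|\alpha|/s}$ for $0<|\alpha|<s$ handles the intermediate terms, even at $L^\infty$ endpoints. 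The real problem is in the step you yourself flag as delicate: the high--high term for $r<1$.

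Working in $H^r$ is \emph{not} equivalent to using the $r$-triangle inequality, and the latter route cannot close. This is the case when Nikolskii is applied to $L^r$ norms, as your factor $2^{(j-\ell)d(1/r-1)}$ suggests. That route bounds $\|D^s\Pi_3\|_{L^r}^r$ at best by $\sum_j 2^{jsr}\|\Delta_jf\,\widetilde\Delta_jg\|_{L^r}^r$, which is an $\ell^r$-sum where an $\ell^2$-sum is needed.

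To see the failure, let $D^sf$ and $g$ both equal the sparse lacunary sum $\phi(x)\sum_{k=1}^N e^{i2^{5k}\omega\cdot x}$, with $\hat\phi$ supported in a small ball so that each summand sits in one dyadic annulus. For $1<p_1,q_1<\infty$, Littlewood--Paley gives $(\|D^sf\|_{L^{p_1}}\|g\|_{L^{q_1}})^r\sim N^r$. The sum above, however, consists of $N$ terms of size $\sim1$, so it is $\sim N\gg N^r$.

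What works is to stay pointwise inside the square function. Start from $\|D^s\Pi_3\|_{L^r}\le\|D^s\Pi_3\|_{H^r}\sim\|(\sum_\ell|\Delta_\ell D^s\Pi_3|^2)^{1/2}\|_{L^r}$. Set $F_j:=\Delta_jf\,\widetilde\Delta_jg$, take $0<t\le1$, and let $K_\ell$ be the band-limited kernel of $\Delta_\ell D^s$. Applying Nikolskii to $y\mapsto K_\ell(x-y)F_j(y)$ gives
\[
|\Delta_\ell D^sF_j(x)|\lesssim 2^{\ell s}\,2^{(j-\ell)d(1/t-1)}\bigl(M(|F_j|^t)(x)\bigr)^{1/t},\qquad \ell\le j+4 .
\]
Summing in $\ell$ (Young) needs $s>d(1/t-1)$, and Fefferman--Stein on $L^{r/t}(\ell^{2/t})$ needs $t<r$. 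The strictness of $s>d/r-d$ is exactly what lets you choose such a $t$. Taking $t=r$, as in your factor, would require the maximal function to be bounded on $L^1$. You then conclude with H\"older, putting $\sup_j$ on the factor that may lie in $L^\infty$ and the square function on the other.

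The case $r=1$ must be handled the same way, with some $t<1$ and using only $s>0$. Neither the square-function characterization of $L^1$ nor Fefferman--Stein on $L^1$ is available.
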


We will show that the operator $G$, defined by
\begin{equation}
\label{eq:DuhamelOperator}
    G(\Phi^{\delta}(t)) = e^{\delta t\Delta}
    \Phi^{\mathrm{in}} + \int_0^t e^{\delta (t-s)\Delta} \left( \tfrac{1}{2} \left( \{P(s), \Phi^{\delta}(s) \} - \{ \Phi^{\delta}(s), P(s) \} \right) + \tfrac{1}{i}[P(s),\mathcal{Z}^{\delta}(s)]\right) ds,
\end{equation}
has a fixed point in 
\begin{equation}
\label{eq:SolutionSpaceLiouville}
    X^s_T := \{ f\in C([0,T],H^s(\mathbb{R}^6),\mathbb{M}_m(\mathbb{C})) \colon \sup_{t\in[0,T]}\|f(t)\|_{H^s} < R\}
\end{equation}
Note that the mild formulation in \eqref{eq:DuhamelOperator} preserves the constraint $[P,\Phi^{\delta}] = 0$. In fact, denote $f := \frac{1}{i}[P,\Phi]$. Then $f$ satisfies
\begin{equation*}
    \partial_t f = \delta \Delta f + \text{ terms linear in }f, \quad f|_{t=0} = 0.
\end{equation*}
We have
\begin{align*}
    \|G(\Phi^{\delta}(t))\|_{H^s} &\lesssim \|e^{\delta t\Delta}
    \Phi^{\mathrm{in}}\|_{H^s} + \int_0^t \left\|e^{\delta (t-s)\Delta} \left( \tfrac{1}{2} \left( \{P(s), \Phi^{\delta}(s) \} - \{ \Phi^{\delta}(s), P(s) \} \right) + \tfrac{1}{i}[P(s),\mathcal{Z}^{\delta}(s)]\right)\right\|_{H^s} ds \\ &\lesssim \|e^{\delta t\Delta}
    \Phi^{\mathrm{in}}\|_{H^s} \\ &\qquad + \int_0^t \left\|e^{\delta (t-s)\Delta} \left(-\partial_{q_k}\tfrac{1}{2} [\alpha_k,\Phi^{\delta}(s)]_+ - \partial_j A_k(s)\partial_{p_k}\tfrac{1}{2} [\alpha_k, \Phi^{\delta}(s)]_+ +\partial_kV(s)\partial_{p_k}\Phi^{\delta}(s)\right) \right\|_{H^s} ds\\ &\qquad  +\int_0^t \left\|e^{\delta (t-s)\Delta}[P(s),\mathcal{Z}^{\delta}(s)]\right\|_{H^s} ds.
\end{align*}
The two last terms on the right hand side are the Poisson bracket term and the Lagrange multiplier term. We treat them separately. \\

\textbf{The Poisson bracket term:}
Suppose now that $A,V\in H^s_{t,x}$ with $s$ large enough. Then by the heat decay estimate \eqref{eq:HeatDecayDimdSobolevp},
\begin{align*}
    &\left\|e^{\delta (t-s)\Delta} \left(-\partial_{q_k}\tfrac{1}{2} [\alpha_k,\Phi^{\delta}(s)]_+ - \partial_j A_k(s)\partial_{p_k}\tfrac{1}{2} [\alpha_k, \Phi^{\delta}(s)]_+ +\partial_kV(s)\partial_{p_k}\Phi^{\delta}(s)\right) \right\|_{H^s} \\
    &\qquad \lesssim (\delta (t-s))^{1-\frac{3}{r}}\|\langle D\rangle^{s-1}\left(-\partial_{q_k}\tfrac{1}{2} [\alpha_k,\Phi^{\delta}(s)]_+ - \partial_j A_k(s)\partial_{p_k}\tfrac{1}{2} [\alpha_k, \Phi^{\delta}(s)]_+ +\partial_kV(s)\partial_{p_k}\Phi^{\delta}(s)\right) \|_{L^r} \\
    &\qquad \lesssim (\delta (t-s))^{1-\frac{3}{r}}\left[\|\langle D\rangle^{s-1}\nabla_{q}\Phi^{\delta}(s)\|_{L^r} + \|\langle D \rangle^{s-1}(\nabla A(s)\nabla_{p}\Phi^{\delta}(s))\|_{L^r} +\|\langle D \rangle^{s-1}(\nabla V(s)\nabla_{p}\Phi^{\delta}(s))\|_{L^r}\right].
\end{align*}
Now set $r=2$. Bythe Kato-Ponce inequality (fractional Leibniz rule) \eqref{eq:J_ineq},
\begin{align*}
    \|\langle D \rangle^{s-1}(\nabla A\nabla_{p}\Phi^{\delta})\|_{L^2(\mathbb{R}^6)} &\lesssim \| A \|_{W^{s,\infty}_q} \|\nabla_p \Phi^{\delta}\|_{L^{2}} + \|\nabla A\|_{L^{\infty}_q}\| \Phi^{\delta} \|_{H^{s}} \\
    &\lesssim \| A \|_{W^{s,\infty}_q} \|\Phi^{\delta}\|_{H^{1}} + \|\nabla A\|_{L^{\infty}_q}\| \Phi^{\delta} \|_{H^{s}},
    \\
     \|\langle D \rangle^{s-1}(\nabla V\nabla_{p}\Phi^{\delta})\|_{L^2(\mathbb{R}^6)} &\lesssim \| V \|_{W^{s,\infty}_q} \|\nabla_p \Phi^{\delta}\|_{L^{2}} + \|\nabla V\|_{L^{\infty}_q}\| \Phi^{\delta} \|_{H^{s}} \\
    &\lesssim \| V \|_{W^{s,\infty}_q} \|\Phi^{\delta}\|_{H^{1}} + \|\nabla V\|_{L^{\infty}_q}\| \Phi^{\delta} \|_{H^{s}}.
\end{align*}
Thus, e.g. for $s=1$, Lipschitz continuous potentials would be enough, for $s=2$ it would be enough to take $\nabla V$, $\nabla A$ to be Lipschitz. 

Thus, we obtain
\begin{align*}
    &\left\|e^{\delta (t-s)\Delta} \left(-\partial_{q_k}\tfrac{1}{2} [\alpha_k,\Phi^{\delta}(s)]_+ - \partial_j A_k(s)\partial_{p_k}\tfrac{1}{2} [\alpha_k, \Phi^{\delta}(s)]_+ +\partial_kV(s)\partial_{p_k}\Phi^{\delta}(s)\right) \right\|_{H^s} \\
    &\qquad \lesssim (\delta (t-s))^{-\frac{1}{2}}\left(1+\|A(s)\|_{W^{s,\infty}}+\|V(s)\|_{W^{s,\infty}}\right)\|\Phi^{\delta}(s)\|_{H^s}.
\end{align*}\\

\textbf{The Lagrange multiplier term:}
We have to estimate
\begin{equation*}
    \left\|e^{\delta (t-s)\Delta}[P(s),\mathcal{Z}^{\delta}(s)]\right\|_{H^s}.
\end{equation*}
By the heat decay estimate \eqref{eq:HeatDecayDim6Sobolev2},
\begin{align}
\label{eq:HeatDecayLagrangeMultiplier}
    \left\|e^{\delta (t-s)\Delta}[P(s),\mathcal{Z}^{\delta}(s)]\right\|_{H^s} &\lesssim (\delta (t-s))^{1-\frac{3}{r}} \|\langle D\rangle^{s-1}[P(s),\mathcal{Z}^{\delta}(s)]\|_{L^r}. 
\end{align}
Again, we can take $r=2$. Thus we have to estimate
\begin{equation*}
    \|[P(s),\mathcal{Z}^{\delta}(s)]\|_{H^{s-1}}
\end{equation*}
in terms of $\|\Phi(s)\|_{H^{s}}$. This is the content of the following lemma.
\begin{lemma}
\label{eq:PropertiesEstimatesZdelta}
    Let $P$ be the Dirac Hamiltonian. Let $\delta \geq 0$ be a real number. We have the following formula for the Lagrange multiplier $\mathcal{Z}^{\delta}$:
    \begin{equation}
        \mathcal{Z}^{\delta} = \tfrac{i}{\sigma^2}\left(-[\partial_tP, \Phi^{\delta}] - [P,\tfrac{1}{2}(\{P,\Phi^{\delta}\}-\{\Phi^{\delta},P\})]+ \delta[\Delta P,\Phi^{\delta}]+2\delta[\nabla P,\nabla\Phi^{\delta}]\right),
        \label{eq:IdentityZdelta}
    \end{equation}
    where 
    \begin{equation*}
        \sigma \equiv \sigma(t,x,\xi) := |\lambda_+(t,x,\xi)-\lambda_-(t,x,\xi)| = 2\sqrt{1+|\xi-A(t,x)|^2},
    \end{equation*}
    is the distance between the eigenvalues of the Dirac Hamiltonian and where $\nabla=\nabla_{q,p}$ and $\Delta=\Delta_{q,p}$ denote the gradient and Laplacian in $(q,p)$, respectively.
    In particular, we have the following estimates
    \begin{align}
        \|\mathcal{Z}^{\delta}\|_{L^2_{q,p}} &\lesssim \left(\|\tfrac{1}{\sigma^2}\partial_t P\|_{L^{6}_{q,p}}+ \|\tfrac{1}{\sigma}P\|_{W^{1,{\infty}}_{q,p}}^2 + \delta \|\tfrac{1}{\sigma^2}P\|_{W^{2,{\infty}}_{q,p}}\right)\|\Phi^{\delta}\|_{H^1_{q,p}}. 
        \label{eq:EstimateZdeltainL^2}\\
        \|[P,\mathcal{Z}^{\delta}]\|_{L^2_{q,p}} &\lesssim \|\tfrac{1}{\sigma} P\|_{L^{\infty}_{q,p}}\left(\|\tfrac{1}{\sigma}\partial_t P\|_{L^{6}_{q,p}}+ \|\tfrac{1}{\sigma}P\|_{L^{\infty}_{q,p}}\|P\|_{\dot{W}^{1,\infty}_{q,p}} + \delta \|\tfrac{1}{\sigma}D^2_{q,p}P\|_{L^{\infty}_{q,p}}\right)\|\Phi^{\delta}\|_{H^1_{q,p}}. 
        \label{eq:EstimatePZdeltainL^2}
    \end{align}
    More generally,
    \begin{align}
        \|\mathcal{Z}^{\delta}\|_{H^s_{q,p}} &\lesssim  \left( \|\tfrac{1}{\sigma^2}\partial_tP \|_{{W}^{s,6}_{q,p}}+ F\left(\|\tfrac{1}{\sigma}P\|_{W^{s+1,\infty}_{q,p}}\right)+ \delta\|\tfrac{1}{\sigma^2}P\|_{W^{s+2,\infty}_{q,p}}\right) \|\Phi^{\delta}\|_{H^{s+1}_{q,p}}
        \label{eq:EstimateZdeltainH^s} \\
        \|[P,\mathcal{Z}^{\delta}]\|_{H^s_{q,p}} &\lesssim \left(\|\tfrac{1}{\sigma^2}\partial_tP \|_{\dot{W}^{s,3}\cap L^{6}} + F\left(\|\tfrac{1}{\sigma}P\|_{W^{s+1,\infty}_{q,p}}\right) + \delta \left\|\tfrac{1}{\sigma^2} P\right\|_{W^{s+1,{3}}\cap \dot{W}^{1,\infty}}\right) \|\Phi^{\delta}\|_{H^{s+1}_{q,p}}
        \label{eq:EstimatePZdeltainH^s}
    \end{align}
    Here, $F$ denotes a continuous function vanishing at the origin.
    \begin{remark}
        Note that $\delta = 0$ is included in the statement, i.e. the Lemma holds for the unregularized Lagrange multiplier $\mathcal{Z}$ as well.
    \end{remark}
    \begin{remark}
        Note that due to factor $\tfrac{1}{\sigma^2} \in W^{s,\infty}(\mathbb{R}_t \times \mathbb{R}^d_q \times \mathbb{R}^d_{p})$ one does not have to control $P$ in $L^{\infty}$, only $\tfrac{1}{\sigma}P$. In fact for the Dirac equation, $P\notin L^{\infty}$, at least not in the $p$-variable, where it is only $L^{\infty}_{\mathrm{loc}}$ (since it grows linearly in $p$). Thus, by using formula \eqref{eq:IdentityZdelta} as opposed to estimating $[P,\mathcal{Z}^{\delta}]$ by $2P\mathcal{Z}^{\delta}$ and applying the fractional Leibniz rule, we do not have to control low frequencies of $P$ in $L^{\infty}$. However, we have to control derivatives of $P$ in $L^{\infty}$ for the term $[P,\mathcal{Z}^{\delta}]$, e.g. the term $\|P\|_{\dot{W}^{1,\infty}_{q,p}}$ in \eqref{eq:EstimatePZdeltainL^2}. This is due to the Poisson bracket term where we have to control
        \begin{equation*}
            \tfrac{1}{\sigma^2}[P,[P,\tfrac{1}{2}(\{P,\Phi\}-\{\Phi^{\delta},P\})]] = [\tfrac{1}{\sigma}P,[\tfrac{1}{\sigma}P,\tfrac{1}{2}(\{P,\Phi\}-\{\Phi^{\delta},P\})]].
        \end{equation*}
        Hence at some point the derivatives of $P$ coming from the Poisson bracket have to be controlled in $L^{\infty}$, but we never have to control $P$ in $L^{\infty}$.
    \end{remark}
    \begin{proof}
        Applying Lemma \ref{lemma:CharacterizationSubspace} to $\mathcal{L}$ yields
        \begin{equation*}
            \mathcal{Z}^{\delta} = \tfrac{1}{\sigma^2}[P,[P,\mathcal{Z}^{\delta}]] = \tfrac{i}{\sigma^2}[P,\mathcal{L}^{\delta}].
        \end{equation*}
        From the equation we obtain
        \begin{equation*}
            \mathcal{L}^{\delta} = \partial_t \Phi^{\delta} - \tfrac{1}{2}(\{P,\Phi^{\delta}\}-\{\Phi,P\}) - \delta \Delta \Phi^{\delta}.
        \end{equation*}
        Applying $\frac{i}{\sigma^2}[P,\cdot]$ to this identity yields
        \begin{equation*}
            \mathcal{Z}^{\delta} = \tfrac{i}{\sigma^2}[P,\partial_t \Phi^{\delta} - \tfrac{1}{2}(\{P,\Phi^{\delta}\}-\{\Phi^{\delta},P\})- \delta \Delta \Phi^{\delta}].
        \end{equation*}
        Since $[P,\Phi] = 0$ we have
        \begin{equation*}
            [P,\partial_t \Phi^{\delta}] = -[\partial_tP, \Phi^{\delta}] \quad \text{and} \quad [P,\Delta \Phi^{\delta}]= -[\Delta P,\Phi^{\delta}]-2[\nabla P,\nabla \Phi^{\delta}].
        \end{equation*}
        This proves \eqref{eq:IdentityZdelta}. Estimates \eqref{eq:EstimateZdeltainL^2} and \eqref{eq:EstimatePZdeltainL^2} are  direct consequences of this formula and the Sobolev embedding $H^1(\mathbb{R}^6)\hookrightarrow L^3(\mathbb{R}^6)$.

        For the $H^s$ estimate \eqref{eq:EstimateZdeltainH^s}, $s\geq 1$ we consider
        \begin{equation}
            \|D^s \mathcal{Z}\|_{L^2_{q,p}} \leq 2\|D^s\left(\tfrac{1}{\sigma^2}\partial_t P \Phi\right) \|_{L^2_{q,p}} +\|D^s\left( \tfrac{1}{\sigma^2}P(\{P,\Phi\}-\{\Phi,P\})\right)\|_{L^2_{q,p}}.
        \end{equation}
        Now we apply the fractional Leibniz rule to both terms on the right hand side. First,
        \begin{align*}
            \|D^s\left(\tfrac{1}{\sigma^2}\partial_t P \Phi\right) \|_{L^2_{q,p}} &\leq \|D^s(\tfrac{1}{\sigma^2}\partial_t P)\|_{L^3}\|\Phi\|_{L^6} + \|\tfrac{1}{\sigma^2}\partial_t P\|_{L^{6}}\|D^s\Phi\|_{L^3} \\
            &\lesssim \|\tfrac{1}{\sigma^2}\partial_tP \|_{\dot{W}^{s,3}\cap L^{6}}\|\Phi\|_{H^{s+1}},
        \end{align*}
        where the second line follows from the  the Sobolev embedding $H^1(\mathbb{R}^6) \hookrightarrow L^{3}(\mathbb{R}^6)$. Second,
        \begin{align*}
            \|D^s\left( \tfrac{1}{\sigma^2}P(\{P,\Phi\}-\{\Phi,P\})\right)\|_{L^2_{q,p}} &\leq \|D^s\tfrac{1}{\sigma}P\|_{L^{\infty}} \|\tfrac{1}{\sigma}(\{P,\Phi\}-\{\Phi,P\})\|_{L^2}  \\
            &\qquad \qquad + \|\tfrac{1}{\sigma}P\|_{L^{\infty}}\|D^s\tfrac{1}{\sigma} (\{P,\Phi\}-\{\Phi,P\})\|_{L^2} \\ &\lesssim \|D^s\tfrac{1}{\sigma}P\|_{L^\infty}\|\tfrac{1}{\sigma}\nabla_{q,p}P\|_{L^{\infty}}\|\nabla_{q,p}\Phi\|_{L^2}\\&\qquad \qquad  + \|\tfrac{1}{\sigma}P\|_{L^{\infty}}\Big(\|\tfrac{1}{\sigma}\nabla_{q,p}P\|_{L^{\infty}}\|D^s\nabla_{q,p}\Phi\|_{L^2} \\ &\qquad \qquad \qquad +\|D^s\tfrac{1}{\sigma}\nabla_{q,p}P\|_{\infty}\|\nabla_{q,p}\Phi\|_{L^2} \Big) \\
            &\lesssim F\left(\|\tfrac{1}{\sigma}P\|_{W^{s+1,\infty}_{q,p}}\right)\|\Phi\|_{H^{s+1}_{q,p}},
        \end{align*}
        where $F$ is some continuous function vanishing at the origin. 

        Finally, the parabolic regularization term can easily be estimated by
        \begin{align*}
            \left\|D^s\left[\tfrac{1}{\sigma^2}\Delta P, \Phi\right]\right\|_{L^2_{q,p}} &\lesssim \left\|D^s\tfrac{1}{\sigma^2}\Delta P\right\|_{L^{3}} \|\Phi\|_{L^6} + \left\|\tfrac{1}{\sigma^2}\Delta P\right\|_{L^{6}}\|D^s\Phi\|_{L^3} \\
            &\lesssim \left\|\tfrac{1}{\sigma^2} P\right\|_{W^{s+2,{3}}\cap \dot{H}^2} \|\Phi\|_{H^{s+1}_{q,p}},
        \end{align*}
        and
        \begin{align*}
            \left\|D^s\left[\tfrac{1}{\sigma^2}\nabla P, \nabla \Phi\right]\right\|_{L^2_{q,p}} &\lesssim \left\|D^s\tfrac{1}{\sigma^2}\nabla P\right\|_{L^{3}} \|\nabla\Phi\|_{L^6} + \left\|\tfrac{1}{\sigma^2}\nabla P\right\|_{L^{\infty}}\|D^s\nabla\Phi\|_{L^2} \\
            &\lesssim \left\|\tfrac{1}{\sigma^2} P\right\|_{W^{s+1,{3}}\cap \dot{W}^{1,\infty}} \|\Phi\|_{H^{s+1}_{q,p}}.
        \end{align*}
        This concludes the proof.
    \end{proof}
\end{lemma}
Applying Lemma \ref{eq:PropertiesEstimatesZdelta} to \eqref{eq:HeatDecayLagrangeMultiplier} yields
\begin{align}
\label{eq:HeatDecayLagrangeMultiplierL2}
   \left\|e^{\delta (t-s)\Delta}[P(s),\mathcal{Z}^{\delta}(s)]\right\|_{H^s} &\lesssim (\delta (t-s))^{-\frac{1}{2}} \|\langle D \rangle^{s-1}P(s)\|_{\dot{W}^{1,\infty}}\|\Phi^{\delta}(s)\|_{H^s}
\end{align}

\textbf{Final estimate:}
Now we can finalize the estimate of \eqref{eq:DuhamelOperator} in $H^s$. Combining the estimates for Poisson bracket term and \eqref{eq:HeatDecayLagrangeMultiplierL2} yields
\begin{align*}
    \|G(\Phi^{\delta}(t))\|_{H^s} &\lesssim \|e^{\delta t\Delta}
    \Phi^{\mathrm{in}}\|_{H^s} + \int_0^t(\delta (t-s))^{-\frac{1}{2}}\left(\|A(s)\|_{W^{s,\infty}}+\|V(s)\|_{W^{s,\infty}}\right)\|\Phi^{\delta}(s)\|_{H^s}ds \\ &\qquad \qquad + \int_0^t (\delta (t-s))^{-\frac{1}{2}} \|\langle D \rangle^{s-1}P(s)\|_{\dot{W}^{1,\infty}}\|\Phi^{\delta}(s)\|_{H^s}ds \\
    &\lesssim \|e^{\delta t\Delta}
    \Phi^{\mathrm{in}}\|_{H^s} + \left(\|A(s)\|_{L^{\infty}_tW^{s,\infty}}+\|V(s)\|_{L^{\infty}_tW^{s,\infty}}\right)\|\Phi^{\delta}(s)\|_{L^{\infty}_tH^s}\int_0^t(\delta (t-s))^{-\frac{1}{2}}ds \\ &\qquad \qquad + \|\langle D \rangle^{s-1}P(s)\|_{L^{\infty}_t\dot{W}^{1,\infty}}\|\Phi^{\delta}(s)]\|_{L^{\infty}_tH^s}\int_0^t (\delta (t-s))^{-\frac{1}{2}} ds \\
    &\lesssim \|e^{\delta t\Delta}
    \Phi^{\mathrm{in}}\|_{H^s} \\ &\qquad + 2\delta^{-\frac{1}{2}}t^{\frac{1}{2}}\left(\|A(s)\|_{L^{\infty}_tW^{s,\infty}}+\|V(s)\|_{L^{\infty}_tW^{s,\infty}}+\|\langle D \rangle^{s-1}P(s)\|_{L^{\infty}_t\dot{W}^{1,\infty}}\right)\|\Phi^{\delta}(s)\|_{L^{\infty}_tH^s}. 
\end{align*}
Thus, if 
\begin{equation}
    \|A(s)\|_{L^{\infty}_tW^{s,\infty}}+\|V(s)\|_{L^{\infty}_tW^{s,\infty}}+\|\langle D \rangle^{s-1}P(s)\|_{L^{\infty}_t\dot{W}^{1,\infty}} \leq C,
\end{equation}
for some constant $C$, we have for $\Phi^{\delta}\in X^s_T$ (recall the definition of $X^s_T$ in \eqref{eq:SolutionSpaceLiouville}),
\begin{equation}
     \sup_{t\in [0,T]}\|G(\Phi^{\delta}(t))\|_{H^s} \lesssim \|\Phi^{\mathrm{in}}\|_{H^s} + 2C\delta^{-\frac{1}{2}}T^{\frac{1}{2}}R.
\end{equation}
An appropriate choice of $T^{\delta}=T^{\delta}(\delta,R)$ shows that $G$ maps $X^T_s$ into itself. A straightforward argument for the difference $G(\Phi^{\delta})-G(\Psi^{\delta})$ with identical initial data yields the desired contraction. Note that since $T^{\delta}$ depends on $\delta$ it is not immediately clear that one can pass to the limit. To this end we need an a priori estimate for the solution $\Phi^{\delta}$ that holds uniformly in $\delta$. This will be dealt with in the next section.

\subsection{A priori estimate in $H^k$}
In order to pass to the limit in the regularized equation, we need an a priori estimate on the $H^k$ norm of $\Phi$, where $k$ is an integer. 
\begin{proposition}
\label{thm:AprioriMatrixLiouville}
    Let $A,V\in H^k(\mathbb{R}\times \mathbb{R}^d)$ for $k$ large enough. Let $\Phi$ be a solution in $C^1_tH^k_{q,p}$ of the matrix Liouville equation \eqref{eq:MatrixLiouvilleTest} and let $k\geq 1$. Then we have the following differential inequality:
    \begin{equation}
    \label{eq:AprioriMatrixLiouville}
        \frac{d}{dt}\|\Phi(t)\|^2_{H^k_{q,p}} \lesssim (1+F\left(\|\nabla A(t)\|_{{W}^{k,\infty}},\|\partial_t A(t)\|_{{W}^{k-1,\infty}},\|\nabla V(t)\|_{W^{k,\infty}}\right)\|\Phi(t)\|_{H^k_{q,p}}^2,
    \end{equation}
    where $F$ is a continuous function vanishing at the origin. The a priori estimate also holds for the regularized solution $\Phi^{\delta}$.
    \begin{remark}
        In the proof of the a priori estimate it is crucial to estimate $\mathcal{Z}$ in $H^{k-1}$, since its $H^{k-1}$ norm is controlled by the $H^k$ norm of $\Phi$, cf. Lemma \ref{eq:PropertiesEstimatesZdelta}. In order to achieve that we need to exploit the commutativity of $P$ and $\Phi$ and partial integration in order to shift the $k$th derivative of $\mathcal{Z}$ on $P$ and $\Phi$.
    \end{remark}
\begin{proof}    
Let $\alpha$ be a multiindex with $|\alpha|=k$. We apply $D^{\alpha}=D^{\alpha}_{q,p}$ to \eqref{eq:MatrixLiouvilleTest}
and obtain
\begin{align}
    \begin{cases}
\partial_t D^{\alpha}\Phi = D^{\alpha}\left(\tfrac{1}{i}[P,\mathcal{Z}]\right) + \tfrac{1}{2} D^{\alpha}\left( \{P, \Phi \} - \{ \Phi, P \} \right),  \\
D^{\alpha}[P,\Phi] = 0.
\end{cases}
\end{align}
Multiplying by $D^{\alpha}\Phi$, taking the trace $\tr_{\mathbb{C}^m}$ and integrating over $\mathbb{R}^d_q\times \mathbb{R}^d_p$ yields
\begin{align*}
    \partial_t \|D^{\alpha}\Phi\|_{L^2(\mathbb{R}^d_q\times \mathbb{R}^d_p,\mathbb{M}_m(\mathbb{C}))} &= \iint \tr(D^{\alpha}\Phi\tfrac{1}{2}D^{\alpha} \left( \{P, \Phi \} - \{ \Phi, P \} \right)) dqdp \\
    &\qquad + \iint \tr(D^{\alpha}\Phi\tfrac{1}{i}D^{\alpha}[P,\mathcal{Z}])dqdp.
\end{align*}
The addition of the parabolic term $\delta\Delta \Phi$ only adds a negative term $-\delta\|D^{\alpha}\nabla\Phi^{\delta}\|_L^2$ to the right hand side and therefore all the calculations in this section carry over to the regularized equation.\\

\textbf{The Poisson bracket term:} The matrix-valued Poisson bracket $\tfrac{1}{2}(\{P,\Phi\}-\{\Phi,P\})$ is the matrix-valued equivalent of the scalar Poisson bracket $\{f,g\}$. In fact, it is a generalization in the sense that it obeys the same defining properties. It is anti-commutative:
\begin{equation}
    \tfrac{1}{2}(\{P,\Phi\}-\{\Phi,P\}) = -\tfrac{1}{2}(\{\Phi,P\}-\{P,\Phi\})
\end{equation}
(Note that $\{P,\Phi\} \neq -\{\Phi,P\}$ for matrix-valued functions). Moreover, it is bilinear, obeys the Leibniz rule and the Jacobi identity when under the trace $\tr(\cdot)$ (by cyclicity). Therefore the calculations for the scalar case, where one has to calculate
\begin{equation}
    \iint \nabla_{x,\xi}f\cdot \nabla_{x,\xi}\{H,f\}d x\xi,
\end{equation}
can be carried out analogously in the matrix-valued case.\\

\textbf{The Lagrange multiplier term:}

Contrary to the scalar Liouville equation, we have to deal with the Lagrange multiplier term in the matrix-valued case. Let $|\alpha|\leq k$. Then
\begin{align*}
    \iint \tr(D^{\alpha}\Phi\tfrac{1}{i}D^{\alpha}[P,\mathcal{Z}])dqdp &= \iint \tr(D^{\alpha}\Phi\sum_{|\beta|\leq|\alpha|}\begin{pmatrix}
        \alpha \\ \beta
    \end{pmatrix}\tfrac{1}{i}[D^{\alpha-\beta}P,D^{\beta}\mathcal{Z}])dqdp \\ 
    &= -\iint \tr(\sum_{|\beta|\leq|\alpha|}\begin{pmatrix}
        \alpha \\ \beta
    \end{pmatrix}\tfrac{1}{i}[D^{\alpha-\beta}P,D^{\alpha}\Phi] D^{\beta}\mathcal{Z})dqdp \\
    &= -\iint \tr(\sum_{|\beta|\leq|\alpha|-1}\begin{pmatrix}
        \alpha \\ \beta
    \end{pmatrix}\tfrac{1}{i}[D^{\alpha-\beta}P,D^{\alpha}\Phi] D^{\beta}\mathcal{Z})dqdp \\&\qquad \qquad  -\iint \tr(\tfrac{1}{i}[P,D^{\alpha}\Phi] D^{\alpha}\mathcal{Z})dqdp. \\
    \end{align*}
Since $D^{\alpha}[P,\Phi] = 0$ we obtain 
\begin{equation*}
    [P,D^{\alpha}\Phi] = -\sum_{|\beta|\leq|\alpha|-1}\begin{pmatrix}
        \alpha \\ \beta
    \end{pmatrix}[D^{\alpha-\beta}P,D^{\beta}\Phi],
\end{equation*}
and so
\begin{align*}
    D[P,D^{\alpha}\Phi] &= -D\sum_{|\beta|\leq|\alpha|-1}\begin{pmatrix}
        \alpha \\ \beta
    \end{pmatrix}[D^{\alpha-\beta}P,D^{\beta}\Phi] \\
    &= -\sum_{|\beta|\leq|\alpha|-1}\begin{pmatrix}
        \alpha \\ \beta
    \end{pmatrix}\left([D^{\alpha-\beta+1}P,D^{\beta}\Phi] +[D^{\alpha-\beta}P,D^{\beta+1}\Phi]\right).
\end{align*}
Therefore,
\begin{align*}
    -\iint \tr(\tfrac{1}{i}[P,D^{\alpha}\Phi] D^{\alpha}\mathcal{Z})dqdp = \iint \tr(\tfrac{1}{i}\sum_{|\beta|\leq|\alpha|-1}\begin{pmatrix}
        \alpha \\ \beta
    \end{pmatrix}[D^{\alpha-\beta}P,D^{\beta}\Phi]D^{\alpha}\mathcal{Z})dqdp \\ =
    -\iint \tr(\tfrac{1}{i}\sum_{|\beta|\leq|\alpha|-1}\begin{pmatrix}
        \alpha \\ \beta
    \end{pmatrix}\left([D^{\alpha-\beta+1}P,D^{\beta}\Phi] +[D^{\alpha-\beta}P,D^{\beta+1}\Phi]\right)D^{\alpha-1}\mathcal{Z})dqdp.
\end{align*}
It follows that
\begin{align*}
     \iint \tr(D^{\alpha}\Phi\tfrac{1}{i}D^{\alpha}[P,\mathcal{Z}])dqdp = -\iint \tr(\sum_{|\beta|\leq|\alpha|-1}\begin{pmatrix}
        \alpha \\ \beta
    \end{pmatrix}\tfrac{1}{i}[D^{\alpha-\beta}P,D^{\alpha}\Phi] D^{\beta}\mathcal{Z})dqdp  \\
    -\iint \tr(\tfrac{1}{i}\sum_{|\beta|\leq|\alpha|-1}\begin{pmatrix}
        \alpha \\ \beta
    \end{pmatrix}\left([D^{\alpha-\beta+1}P,D^{\beta}\Phi] +[D^{\alpha-\beta}P,D^{\beta+1}\Phi]\right)D^{\alpha-1}\mathcal{Z})dqdp.
\end{align*}
We observe that the highest derivative of $\mathcal{Z}$ is of order $|\alpha|-1 \leq k-1$ and the highest derivative of $\Phi$ is of order $|\alpha| \leq k$, while the highest derivative of $P$ is of order $|\alpha|+1 \leq k+1$. Therefore we obtain an estimate of the form
\begin{equation*}
    \left|\iint \tr(D^{\alpha}\Phi\tfrac{1}{i}D^{\alpha}[P,\mathcal{Z}])dqdp\right| \lesssim \|P\|_{\dot{W}^{\infty,k+1}}\|\Phi\|_{\dot{H}^k}\|\mathcal{Z}\|_{\dot{H}^{k-1}}.
\end{equation*}
By estimate \eqref{eq:EstimateZdeltainH^s}, we obtain
\begin{equation}
    \left|\iint \tr(D^{\alpha}\Phi\tfrac{1}{i}D^{\alpha}[P,\mathcal{Z}])dqdp\right| \lesssim (1+F\left(\|\nabla A(t)\|_{{W}^{k,\infty}},\|\partial_t A(t)\|_{{W}^{k-1,\infty}},\|\nabla V(t)\|_{W^{k,\infty}}\right)\|\Phi\|_{\dot{H}^k}^2.
\end{equation}
where $F$ denotes a continuous function vanishing at the origin.
Combined with the $L^2$ identity
\begin{equation}
    \iint \tr(\Phi\tfrac{1}{i}[P,\mathcal{Z}])dqdp = 0,
\end{equation}
this yields the claim. The proof is completed.
\end{proof}
\end{proposition}
\begin{remark}
Note that a more careful analysis would yield some more optimized estimates for the symbol $P$, by using e.g. Sobolev's inequality and other cancellation properties of the Dirac symbol (e.g. $D^k_pP= 0$ for $k\geq 2$ and $D^k_pD^j_qP=0$ for $j,k\geq 1$).
\end{remark}
    \subsection{Convergence of the regularized equation}

    As a first consequence of the a priori estimate \eqref{eq:AprioriMatrixLiouville}, we can extend the $\delta$-dependent interval $[0,T^{\delta})$ to an interval  $[0,T]$, independent of $\delta$.
    
    The convergence of solutions of the regularized equation to strong solutions of the original equation follows an argument due to Bona-Smith \cite{bona_initial-value_1975}, which boils down to showing that the solutions to the regularized equation are Cauchy in $C([0,T],L^2)$. To this end consider the difference $\chi$ between to solutions $\Phi^{\delta},\Phi^{\delta'}$:
    \begin{equation}
        \chi := \Phi^{\delta}-\Phi^{\delta'}.
    \end{equation}
    It is easy to see that $\chi$ solves
    \begin{align}
        \begin{cases}\partial_t \chi = \tfrac{1}{i}[P,\mathcal{Z}^{\delta}-\mathcal{Z}^{\delta'}] +\tfrac{1}{2}\left(\{P,\chi\}-\{\chi,P\}\right) + (\delta-\delta')\Delta \Phi^{\delta}+\delta'\Delta \chi,\\
        [P,\chi] = 0.
        \end{cases}
    \end{align}
    The energy estimate for $\chi$ yields
    \begin{align*}
        \tfrac{1}{2}\partial_t\|\chi\|^2_{L^2} &\lesssim -\iint \tr(\chi\tfrac{1}{i}[P,\mathcal{Z}^{\delta}-\mathcal{Z}^{\delta'}])dqdp + |\delta-\delta'|\iint \tr(\chi\Delta\Phi^{\delta})dqdp \\ &\qquad \qquad -\delta'\iint \tr(\chi\Delta \chi) dqdp \\ &= \iint \tr(\tfrac{1}{i}[P,\chi](\mathcal{Z}^{\delta}-\mathcal{Z}^{\delta'}))dqdp + |\delta-\delta'|\iint \tr(\chi\Delta\Phi^{\delta})dqdp - \delta'\|\nabla\chi\|^2_{L^2}
    \end{align*}
    Thus, since $[P,\chi] = 0$ and since the last term is negative,
    \begin{equation*}
        \tfrac{1}{2}\partial_t\|\chi\|^2_{L^2} \lesssim (\delta-\delta') \left|\iint \tr(\chi\Delta\Phi^{\delta})dqdp\right| \lesssim |\delta-\delta'| \|\chi\|_{L^2}\|\Phi^{\delta}\|_{H^2}.
    \end{equation*}
    Taking the supremum in $t\in[0,T]$ and applying Gronwall's inequality yields
    \begin{equation}
        \sup_{t\in [0,T]}\|\chi\|_{L^2}\lesssim |\delta-\delta'|,
    \end{equation}
    and so $\Phi^{\delta}$ is Cauchy in $C([0,T],L^2)$. By the a priori estimate \eqref{eq:AprioriMatrixLiouville}, we have that $\Phi^{\delta} \in L^{\infty}([0,T],H^s)$ where $s\geq 2$. By interpolation with this uniform-in-$\delta$ bound we obtain that $\Phi^{\delta}$ is Cauchy in $C([0,T],H^r)$ for $r<s$. 

    From the equation one has $\Phi^{\delta} \in C([0,T],H^{r-2})$. By taking pointwise limits, we obtain that $\Phi \in C([0,T],H^r)\cap C^1([0,T],H^{r-2})$ solves the unregularized equation. Moreover, since $\Phi^{\delta}$ is bounded in $H^s$ uniformly in $\delta$ and $\Phi^{\delta}$ converges to $\Phi$ in $L^2$, we obtain that $\Phi \in L^{\infty}([0,T],H^s)$.
    
    For the continuity in time, we define the approximate identity $\varphi^{\eta}$, $\eta > 0$ where $\varphi \in \mathcal{S}$ is such that $\widehat{\varphi}(0) = 1$, $\partial_{\zeta}^k \widehat{\varphi}(0) = 0$ for all $k>0$ and $\varphi^{\eta}(z) = \eta^{-2d} \varphi(z/\eta)$. Then define the mollified initial data $\Phi_{\mathrm{in}}^{\eta} = \varphi^{\eta} \ast \Phi^{\mathrm{in}}$. Let $\Phi^{\eta}$ be the solution $\Phi_{\mathrm{in}}^{\eta}$. Then one has
    \begin{equation}
        \label{eq:APrioriRegularizedPhi}\|\Phi^{\eta}\|_{H^{s+l}} \leq \eta^{-l}.
    \end{equation}
    Indeed, due to the a priori estimate it suffices to show the claim at $t=0$. Then use Fourier to obtain
    \begin{equation*}
        \|\Phi^{\eta}_{\mathrm{in}}\|_{H^{s+l}} = \|\langle \zeta \rangle^{l} \widehat{\varphi}(\eta\zeta) \langle \zeta \rangle^s \widehat{\Phi^{\eta}_{\mathrm{in}}}\|_{L^2} \lesssim \|\langle \zeta \rangle^{l} \widehat{\varphi}(\eta\zeta)\|_{L^{\infty}} \|\Phi^{\mathrm{in}}\|_{H^{s}} \lesssim \eta^{-l}.
    \end{equation*}
Combined with interpolation, estimate \eqref{eq:APrioriRegularizedPhi} shows that
    \begin{equation*}
        \Phi^{\eta} \in C([0,T],L^2)\cap L^{\infty}([0,T],H^{s+l}) \subset C([0,T],H^s).
    \end{equation*}
    It remains to show that $\Phi^{\eta}$ converges uniformly to $\Phi$. Again, we are reduced to showing the claim for the initial data by the a priori estimate. The claim follows by a similar analysis to the one above using the fact that the derivatives of $\widehat{\varphi}$ vanish at the origin. Thus we have shown that
    \begin{equation}
        \Phi \in C([0,T],H^s).
    \end{equation}
    
    Moreover, standard arguments lead to unicity and continuous dependence on the data (for reference cf. \cite{erdogan_dispersive_2016}).

\appendix
\section{Characterization of off-diagonal subspaces}
    \begin{lemma}
\label{lemma:CharacterizationSubspace}
Let $P$ be a matrix-valued symbol 
with distinct eigenvalues $\lambda_{j}$, $j=1,...,m$ of constant multiplicity (for all $(x,\xi) \in \mathbb{R}^d_x \times \mathbb{R}^d_{\xi}$) with smooth projections $\Pi_{j} \in C^{\infty}(\mathbb{R}_t \times \mathbb{R}^{d}_x \times \mathbb{R}^d_{\xi},\mathbb{M}_m(\mathbb{C}))$ with bounded derivatives. Let $A\in \mathcal{S}'(\mathbb{R}_t \times \mathbb{R}^{d}_x \times \mathbb{R}^d_{\xi},\mathbb{M}_m(\mathbb{C}))$. Then 
\begin{equation*}
    \Pi_{j}A\Pi_{j} = 0
\end{equation*}
if and only if there exists ${B}$ such that
\begin{equation*}
    A = \tfrac{1}{i}[P,B].
\end{equation*}    
Explicitly,
\begin{equation*}
    B = \sum_{j\neq k} \frac{i}{\lambda_j-\lambda_k}\Pi_jA\Pi_k.
\end{equation*}
Moreover, the following formula holds:
\begin{equation}
\label{eq:formulaA}
    A =  \sum_{j\neq k}\frac{1}{(\lambda_j-\lambda_k)^2}[P,[P,\Pi_jA\Pi_k]].
\end{equation}
\begin{proof}
    Suppose that there is a matrix $B$ such that $A = \tfrac{1}{i}[P,B]$. The spectral decomposition of $P$ is given by
    \begin{equation*}
        P = \sum_{j=1}^m \lambda_j \Pi_j.
    \end{equation*}
    Then
    \begin{align*}
        \Pi_jA\Pi_j &= \tfrac{1}{i}\Pi_j[P,B]\Pi_j \\ &= \tfrac{1}{i}\sum_{k=1}^m\Pi_j[\lambda_k\Pi_k,B]\Pi_j \\ &=\tfrac{1}{i}\sum_{k=1}^m\lambda_k\left(\Pi_j\Pi_k B\Pi_j - \Pi_jB\Pi_k\Pi_j\right) \\&=0,
    \end{align*}
    since $\Pi_j\Pi_k = 0$ for $j\neq k$ and $\Pi_j\Pi_k = \Pi_j$ if $j=k$.

    On the other hand, assume that $\Pi_jA\Pi_j =0$. Consider the commutator of $P$ with $\Pi_jA\Pi_k$ with $j\neq k$. We have
    \begin{align*}
        [P,\Pi_jA\Pi_k] &= \sum_{\ell=1}^m\left(\lambda_{\ell}\Pi_{\ell}\Pi_jA\Pi_k - \lambda_{\ell}\Pi_jA\Pi_k\Pi_{\ell}\right) \\ &= (\lambda_j-\lambda_k)\Pi_jA\Pi_k.
    \end{align*}
From this we surmise that
\begin{align*}
    \left[P,\sum_{j\neq k}\frac{1}{\lambda_j-\lambda_k}\Pi_jA\Pi_k\right] = \sum_{j\neq k} \Pi_jA\Pi_k = A,
\end{align*}
since $\Pi_jA\Pi_j = 0$ and $\sum_j \Pi_j = I$.
Thus, $B$ is given by
\begin{equation}
    B = i\sum_{j\neq k}\frac{1}{\lambda_j-\lambda_k}\Pi_jA\Pi_k
\end{equation}

Moreover, we readily observe that $\Pi_j B \Pi_j = 0$, so there exists a $C$ such that $B = \tfrac{1}{i}[P,C]$ with $C$ given by
\begin{align*}
    C &= i\sum_{j'\neq k'}\frac{1}{\lambda_{j'}-\lambda_{k'}}\Pi_{j'}B\Pi_{k'} \\
    &= i\sum_{j'\neq k'}\frac{1}{\lambda_{j'}-\lambda_{k'}}\Pi_{j'}\left(i\sum_{j\neq k}\frac{1}{\lambda_j-\lambda_k}\Pi_jA\Pi_k\right)\Pi_{k'} \\
    &= - \sum_{j'\neq k'}\sum_{j\neq k}\frac{1}{\lambda_{j'}-\lambda_{k'}}\frac{1}{\lambda_j-\lambda_k} \Pi_{j'}\Pi_jA\Pi_k\Pi_{k'} \\ &= - \sum_{j'\neq k'}\sum_{j\neq k}\frac{1}{\lambda_{j'}-\lambda_{k'}}\frac{1}{\lambda_j-\lambda_k} \delta_{j'j}\delta_{k'k}\Pi_jA\Pi_k \\ 
    &= - \sum_{j\neq k}\frac{1}{(\lambda_j-\lambda_k)^2} \Pi_jA\Pi_k.
\end{align*}
Thus we obtain
\begin{align*}
    A &= \tfrac{1}{i}[P,B] \\ &= -[P,[P,C]] \\
    &= \sum_{j\neq k}\frac{1}{(\lambda_j-\lambda_k)^2}[P,[P,\Pi_jA\Pi_k]].
\end{align*}
This concludes the proof.
\end{proof}
\begin{remark}
     If there are only two distinct eigenvalues $\lambda_1<\lambda_2$ with $\sigma := |\lambda_1-\lambda_2| $, then \eqref{eq:formulaA} has the following simplification
    \begin{equation}
        A = \frac{1}{\sigma^2}[P,[P,A]].
    \end{equation}
    Indeed, since $A=\sum_{j\neq k} \Pi_j A\Pi_k$, 
    \begin{equation*}
        A = \sum_{j\neq k}\frac{1}{(\lambda_j-\lambda_k)^2}[P,[P,\Pi_jA\Pi_k]] = \frac{1}{\sigma^2}[P,[P,\Pi_1A\Pi_2+\Pi_2A\Pi_1]] = \frac{1}{\sigma^2}[P,[P,A]].
    \end{equation*}
\end{remark}
\end{lemma}

\section{A counterexample to naive matrix-valued QOT}
\label{sec:Counterexample}
We start by defining couplings between matrix-valued density operators $R$ and classical matrix-valued phase space densities $F$ on $\mathbb{R}^d_x \times \mathbb{R}^d_{\xi}$. Recall that a matrix-valued density operator is an element $R\in \mathcal{L}(L^2(\mathbb{R}^d), \mathbb{M}_m(\mathbb{C}))$, i.e.
\begin{equation}
    R = \sum_{j,k} R_{jk} e_j \otimes e_k^{\ast}, \quad R_{jk} \in \mathcal{L}(L^2(\mathbb{R}^d)) 
\end{equation}
that is positive and trace class.
We represent $R$ by its kernel $R(x,y) \in L^2(\mathbb{R}^d \times \mathbb{R}^d, \mathbb{M}_m(\mathbb{C}))$. 
A classical matrix-valued phase space density is an element $F \in \mathcal{P}(\mathbb{R}^d_x \times \mathbb{R}^d_{\xi}, \mathbb{M}_m(\mathbb{C}))$, i.e.
\begin{equation}
    F(x,\xi) = \sum_{j,k} f_{jk}(x,\xi) e_j\otimes e_k^{\ast},
\end{equation}
such that 
\begin{equation}
    \int_{\mathbb{R}^d_x \times \mathbb{R}^d_{\xi}} \tr(F(x,\xi))dx d\xi = \int_{\mathbb{R}^d_x \times \mathbb{R}^d_{\xi}} \sum_j f_{jj}(x,\xi)dx d\xi = 1.
\end{equation}
We now define a coupling between $F$ and $R$  as an element $$Q\in \mathcal{P}(\mathbb{R}^d_x \times \mathbb{R}^d_{\xi},\mathbb{M}_m) \otimes \mathcal{L}(L^2(\mathbb{R}^d),\mathbb{M}_m),$$i.e.
\begin{equation}
    Q(x,\xi) := \sum_{j,k,\ell,m} q_{jk\ell m}(x,\xi) e_j\otimes e_k^{\ast} \otimes e_{\ell} \otimes e_m^{\ast}, \quad q_{jk\ell m}(x,\xi)\in \mathcal{L}(L^2(\mathbb{R}^d)),
\end{equation}
such that $Q$ has the marginals
\begin{equation}
    \int_{\mathbb{R}^d_x \times \mathbb{R}^d_{\xi}} \sum_{m}q_{jkmm}(x,\xi) dx d\xi = R_{jk},
\end{equation}
and
\begin{equation}
    \sum_{j} \tr_{L^2(\mathbb{R}^d)}( q_{jj\ell m}(x,\xi)) = f_{\ell m}(x,\xi).
\end{equation}
The positivity of $Q$ can be expressed as follows: For all $\Psi \in L^2(\mathbb{R}^d, \mathbb{M}_m(\mathbb{C})$,
\begin{equation}
    \langle \Psi_{j\ell}, q_{jk\ell m}(x,\xi) \Psi_{km} \rangle_{L^2} \geq 0 \quad \text{a.e. in } \mathbb{R}^d_x\times \mathbb{R}^d_{\xi} .
\end{equation}

\textbf{Transportation cost}: In scalar QOT, the cost is an operator $c_{\hbar}(x,\xi,y,D_y)$, e.g.
\begin{equation}
\label{eq:CostDiagional}
    c_{\hbar}(x,\xi,y,D_y) = |x-y|^2 + |\xi- D_y|^2,
\end{equation}
where $D_y = -i\hbar \partial_y$.
The scalar quantum Wasserstein distance $d_{\hbar}$ is defined as
\begin{equation}
    d_{\hbar}(f,R) := \inf_{Q \text{ coupling}} \int_{\mathbb{R}_x^d\times \mathbb{R}_{\xi}^d} \tr(Q(x,\xi)^{\frac{1}{2}}c_{\hbar}(x,\xi,y,D_y)Q(x,\xi)^{\frac{1}{2}}) dx d\xi.
\end{equation}
Again, a naive generalization to matrices would be (assuming everything is such that the trace is well-defined)
\begin{equation}
    \mathfrak{d}_{\hbar}(F,R) := \inf_{Q \text{ coupling}} \int_{\mathbb{R}_x^d\times \mathbb{R}_{\xi}^d} \Tr(Q(x,\xi)^{\frac{1}{2}}c_{\hbar}(x,\xi,y,D_y)Q(x,\xi)^{\frac{1}{2}}) dx d\xi.
    \label{eq:NaiveMatrixDistance}
\end{equation}
where $\Tr(\cdot)$ denotes the trace in $\mathcal{L}(L^2(\mathbb{R}^d,\mathbb{M}_m(\mathbb{C})))$.

It seems that applying the diagonal cost $c$ \eqref{eq:CostDiagional} here causes the same problem as for the classical matrix-valued optimal transport. In fact, we can construct a counterexample where the transportation cost does not take into account the differences in direction of the matrix part of the density operators and phase space densities. 

\textbf{Counterexample:}
Take the matrix-valued von Neumann equation
\begin{equation}
    i\hbar \partial_t R = [\mathcal{H}_A,R], \quad \mathcal{H_A} = -\frac{\hbar^2}{2}\Delta \otimes A = \begin{pmatrix}
        -\frac{\hbar^2}{2}\Delta & 0 \\ 0 & 0
    \end{pmatrix}
\end{equation}
that is,
\begin{equation}
    i\hbar \partial_t \begin{pmatrix}
        R_{11} & R_{12} \\ \overline{R_{12}} & R_{22}
    \end{pmatrix}(X,Y) = -\frac{\hbar^2}{2} \begin{pmatrix}
        (\Delta_X-\Delta_Y) R_{11} & \Delta_X R_{12} \\ -\Delta_Y \overline{R_{12}} & 0 
    \end{pmatrix}(X,Y)
\end{equation}
In the Weyl variables $x=(X+Y)/2$ and $y=(X-Y)/\hbar$ we obtain the equation for $\Tilde{R}(x,y)$
\begin{equation}
    i\hbar \partial_t \begin{pmatrix}
        \tilde{R}_{11} & \tilde{R}_{12} \\ \overline{\tilde{R}_{12}} & \tilde{R}_{22}
    \end{pmatrix} = -\frac{\hbar^2}{2} \begin{pmatrix}
        \frac{2}{\hbar} \nabla_y\cdot\nabla_x \tilde{R}_{11} & (\frac{1}{4}\Delta_x + \frac{1}{\hbar} \nabla_y\cdot\nabla_x + \frac{1}{\hbar^2}\Delta_y)\tilde{R}_{12}  \\ -(\frac{1}{4}\Delta_x - \frac{1}{\hbar} \nabla_y\cdot\nabla_x + \frac{1}{\hbar^2}\Delta_y)\overline{\tilde{R}_{12}} & 0 
    \end{pmatrix}
\end{equation}
and the Wigner equation for the Wigner transform $W_{\hbar}$ after taking a Fourier transform in $y$:
\begin{equation}
    \partial_t \begin{pmatrix}
        W_{11} & W_{12} \\ \overline{W_{12}} & W_{22}
    \end{pmatrix} +  \begin{pmatrix}
        \xi\cdot\nabla_x W_{11} & (\frac{\hbar}{8}\Delta_x + \frac{1}{2} \xi\cdot\nabla_x + \frac{1}{i\hbar}\frac{|\xi|^2}{2})W_{12}  \\ -(\frac{\hbar}{8}\Delta_x - \frac{1}{2} \xi\cdot\nabla_x + \frac{1}{i\hbar}\frac{|\xi|^2}{2})\overline{W_{12}} & 0 
    \end{pmatrix} = 0
\end{equation}
This can be rewritten as
\begin{equation}
    \partial_t \begin{pmatrix}
        W_{11} & W_{12} \\ \overline{W_{12}} & W_{22}
    \end{pmatrix} + \frac{1}{i\hbar}\frac{|\xi|^2}{2} \begin{pmatrix}
        0 & W_{12} \\ \overline{W}_{12} & 0
        \end{pmatrix} + \xi\cdot\nabla_x\begin{pmatrix}
         W_{11} & \frac{1}{2}W_{12}  \\ \frac{1}{2}\overline{W_{12}} & 0 
    \end{pmatrix} + r_{\hbar}= 0
\end{equation}
where 
\begin{equation*}
    r_{\hbar} = -\frac{\hbar}{8}\Delta_x\begin{pmatrix}
        0 & -W_{12} \\ \overline{W}_{12} & 0
        \end{pmatrix}.
\end{equation*}
Moreover, take the matrix-valued phase space density $F=F^{\ast}$ solving
\begin{align}
    \partial_t F + \mathfrak{X} +\frac{1}{2}(\{\mathcal{H}_B,F\}-\{F,\mathcal{H}_B\}) &= 0, \quad \mathcal{H}_B = \frac{1}{2}|\xi|^2 \otimes B = \begin{pmatrix}
        0 & 0 \\ 0 & \frac{|\xi|^2}{2}
    \end{pmatrix},\\ [\mathcal{H}_B,F] &=0
\end{align}
that is,
\begin{align}
    \partial_t \begin{pmatrix}
        f_{11} & f_{12} \\ \overline{f_{12}} & f_{22}
    \end{pmatrix}  + \begin{pmatrix}
        0 & x_{12} \\ \overline{x_{12}} & 0
    \end{pmatrix} + \xi\cdot \nabla_x \begin{pmatrix}
        0 & \tfrac{1}{2} f_{12} \\ \tfrac{1}{2} \overline{f_{12}} & f_{22}
    \end{pmatrix} &= 0, \\
    \frac{|\xi|^2}{2}\begin{pmatrix}
        0 & -f_{12} \\ \overline{f_{12}} & 0
    \end{pmatrix} &= 0.
\end{align}
On the other hand, $f_{22}$ and $R_{11}$ both solve the free evolution:
\begin{equation}
\label{eq:FreeEvoluionScalar}
    \partial_t f_{22} + \xi \cdot \nabla_x f_{22} = 0, \quad i \hbar \partial_t R_{11} = -\frac{\hbar^2}{2}(\Delta_X-\Delta_Y)R_{11}.
\end{equation}
and $f_{11}$ and $R_{22}$ solve
\begin{equation}
    \partial_t f_{11} = 0, \quad i\hbar \partial_t R_{22} = 0.
\end{equation}
Thus, by choosing appropriate initial data, we can take $f_{11}=R_{22}=0$.

Now take an optimal coupling $q$ of $f_{22}$ and $R_{11}$. We construct a matrix coupling $Q$ such that $\tr_{\mathbb{C}^m}(Q)$ 
is the optimal coupling $q$ of $R_{11}$ and $f_{22}$. Indeed, by defining
\begin{align}
    Q:= \begin{cases}
        Q_{1122} = q,\\
        Q_{jk\ell m} = 0 \quad \text{otherwise},
    \end{cases}
\end{align}
we obtain that the marginals of $Q$ are $R = \text{diag}(R_{11},0)$ and $F = \text{diag}(0,f_{22})$ and that $q = \tr(Q)$.
With the diagonal cost \eqref{eq:CostDiagional} it holds that
\begin{align*}
    \mathfrak{d}_{\hbar}(F,R) &\leq \int_{\mathbb{R}_x^d\times \mathbb{R}_{\xi}^d} \Tr(Q^{\frac{1}{2}}(|x-y|^2 + |\xi- D_y|^2)Q^{\frac{1}{2}}) dx d\xi \\
    &= \int_{\mathbb{R}_x^d\times \mathbb{R}_{\xi}^d} \tr_{L^2}(\tr_{\mathbb{C}^m}(Q)^{\frac{1}{2}}(|x-y|^2 + |\xi- D_y|^2)\tr_{\mathbb{C}^m}(Q)^{\frac{1}{2}}) dx d\xi
     \\
    &= \int_{\mathbb{R}_x^d\times \mathbb{R}_{\xi}^d} \tr_{L^2}(q^{1/2}(|x-y|^2 + |\xi- D_y|^2)q^{1/2}) dx d\xi \\
    &= d_{\hbar}(f_{22},R_{11}).
\end{align*}
If we take $R_{11}$ as the Toeplitz quantization of $f_{22}$, the scalar Wasserstein distance $d_{\hbar}(f_{22},R_{11})$ between $f_{22}$ and $R_{11}$ is bounded by $d \hbar$ where $d$ is the dimension and thus, the pseudo-distance $\mathfrak{d}(F,R)$ is small in the limit. But the matrix-valued density $F$ is clearly not the limit of $W_{\hbar}[R]$.

This ``counterexample'' to naive quantum optimal transport for matrix-valued measures and density operators is inspired by a similar counterexample for the classical case in \cite{ning_matrix-valued_2015}.

\section*{Acknowledgement}
J.M. acknowledges funding by the Austrian Science Fund (FWF) via grant 10.55776/J4840.

We thank Herbert Spohn for raising the question about the rate of decoupling of the eigenmodes of the Hamiltonian (cf. Remark  \ref{RemarkCommutator}).

We thank Nikolai Leopold for pointing out the connection between the diluted mixed state condition and the fermionic mean field scaling (cf. Remark \ref{remark:DilutedMixedStates}).

    \bibliographystyle{abbrv}
    \bibliography{references}

\end{document}